%
%
%
\documentclass[microtype]{gtpart}     

\author{Ilya Grigoriev}
\givenname{Ilya}
\surname{Grigoriev}
\address{Department of Mathematics\\
University of Chicago\\\newline
5734 S University Ave\\
Chicago, IL 60637}
\email{ilyag@uchicago.edu}
\urladdr{http://math.uchicago.edu/~ilyagr/}

%
%
%

\keyword{Manifold bundles}
\keyword{Characteristic classes}
\keyword{tautological ring}
\keyword{Miller-Morita-Mumford classes}

\subject{primary}{msc2010}{55R40}
\subject{secondary}{msc2010}{57R22}
\subject{secondary}{msc2010}{55T10} 

%

\arxivreference{1310.6804}
\arxivpassword{jztgx}

%
%
%
%
%

\newcommand{\notingt}[1]{}
\newcommand{\mybibstyle}{\bibliographystyle{gtart}}

\usepackage{verbatim}  
 \newcommand{\secone}{\section}
\newcommand{\sectwo}{\subsection}

\usepackage{amsmath}
\usepackage{amsthm}
\usepackage{amssymb}
 \usepackage[all, pdf]{xy}
 \usepackage{enumerate}
\setcounter{secnumdepth}{2}

 \newcommand{\faktor}[2]{#1/#2}

 \usepackage{graphicx}
\usepackage{color}
\usepackage{transparent}
\usepackage{url}

\usepackage{hyphenat}

 \notingt{
    \usepackage[breaklinks]{hyperref} 
    \usepackage{xcolor}
    \definecolor{dark-red}{rgb}{0.4,0.15,0.15}
    \definecolor{dark-blue}{rgb}{0.15,0.15,0.4}
    \definecolor{medium-blue}{rgb}{0,0,0.5}
    \hypersetup{
        colorlinks, linkcolor={dark-red},
            citecolor={dark-blue}, urlcolor={medium-blue}
    }
}

\numberwithin{equation}{subsection}
  \newtheorem{theorem}{Theorem}[section]
\newtheorem{proposition}[theorem]{Proposition}
\newtheorem{lemma}[theorem]{Lemma}
\newtheorem{corollary}[theorem]{Corollary}
\newtheorem{observation}[theorem]{Observation}
\newtheorem{fact}[theorem]{Fact}
\newtheorem*{fact*}{Fact}
\theoremstyle{definition}
\newtheorem{remark}[theorem]{Remark}
\newtheorem{definition}[theorem]{Definition}
\newtheorem{example}[theorem]{Example}
\newtheorem{procedure}[theorem]{Procedure}
\newtheorem*{notation}{Notation}

 \theoremstyle{plain}  \newcommand{\thistheoremname}{}
\newtheorem{genericthm}[theorem]{\thistheoremname}

\newtheorem*{genericthm*}{\thistheoremname}
\newenvironment{namedtheorem*}[1]
  {\renewcommand{\thistheoremname}{#1}    \begin{genericthm*}}
  {\end{genericthm*}}
            \DeclareMathOperator{\image}{image}
\DeclareMathOperator{\Aut}{Aut}

\newcommand{\tH}{\mathcal{H}}
\newcommand{\tA}{\mathcal{A}} \newcommand{\tB}{\mathcal{B}}
\newcommand{\toiso}{\stackrel{\sim}{\longrightarrow}}
\newcommand{\coeff}[1]{#1_{\mathrm{coeff}}}
\newcommand{\bigsharp}{\mathop{\#}}
\newcommand{\sumspheres}[1]{\bigsharp_g S^{#1} \times S^{#1}}
\newcommand{\BDiff}{\operatorname{BDiff}}
\newcommand{\EDiff}{\operatorname{EDiff}}
\newcommand{\rank}{\operatorname{rank}}
\newcommand{\Hom}{\operatorname{Hom}} 
\newcommand{\BSO}[1]{BSO_{#1}}
\newcommand{\QQ}{\mathbb{Q}}
\newcommand{\ZZ}{\mathbb{Z}}

\newcommand{\NN}{\mathbb{N}}
\newcommand{\floor}[1]{\left\lfloor#1\right\rfloor}
\newcommand{\ceil}[1]{\left\lceil#1\right\rceil}

\newcommand{\gkappa}[1]{\kappa_{#1}}  \newcommand{\Mg}{\cM_g} \newcommand{\cM}{\mathcal{M}} \newcommand{\Mgn}[1]{\cM_{g}\!\left(#1\right)} 
\newcommand{\Mgtot}{\mathcal{E}_g}

\newcommand{\hquot}{/\!\!/}
\newcommand{\Diff}{\operatorname{Diff}}
 
\newcommand{\bundle}[2]{\substack{#1 \\ \downarrow \\ #2}} 
 \newcommand{\pbundle}[2]{\!\left(\bundle{#1}{#2}\right)}

  \newcommand{\defterm}[1]{\emph{#1}}

\newcommand{\tbasisl}{\mathcal{S}}
\newcommand{\tbasiss}{\mathcal{S}'}
\newcommand{\taut}{\mathcal{R}}
\newcommand{\mpoint}{\star}  
\newcommand{\Alt}[1]{\operatorname{Alt} #1}
\newcommand{\K}[1]{\left(\ker \pi_! \right)^{#1}}
\newcommand{\tC}{\mathcal{C}}\newcommand{\tD}{\mathcal{D}}

\newcommand{\gpsi}[2]{{#1}_{\left(#2\right)}} 
\newcommand{\intclass}[1]{\nu_{\left(#1\right)}}   \newcommand{\cA}{\mathcal{A}} \newcommand{\cB}{\mathcal{B}} \newcommand{\cD}{\mathcal{D}}

\begin{document}

\title[Characteristic Classes of Manifold Bundles]{Relations among Characteristic Classes of Manifold Bundles
}

\notingt{
    \subjclass[2010]{55R40,      57R22,  	    55T10      }

    \keywords{Manifold bundles,      tautological ring,      Miller-Morita-Mumford classes}
}

\begin{abstract}

We study relations among characteristic classes of smooth manifold bundles with highly-connected fibers. For bundles with fiber the connected sum of $g$ copies of a product of spheres $S^d \times S^d$ and odd $d$, we find numerous algebraic relations among so-called ``generalized Miller-Morita-Mumford classes". For all $g > 1$, we show that these infinitely many classes are algebraically generated by a finite subset.

Our results contrast with the fact that there are no algebraic relations among these classes in a range of cohomological degrees that grows linearly with $g$, according to recent homological stability results. In the case of surface bundles ($d=1$), our approach recovers some previously known results about the structure of the classical ``tautological ring", as introduced by Mumford,  using only the tools of algebraic topology.

\end{abstract}

\maketitle

   \notingt{
    \vspace{-1.3cm}
    \tableofcontents
    \vspace{-1.3cm}
}

\secone{Introduction}

Let $M$ be a $2d$-dimensional closed oriented smooth manifold. We denote by $\Diff M$ the topological group of \emph{orientation-preserving} diffeomorphisms of $M$. The bar construction can be used to construct the space $\BDiff(M)$ that classifies bundles with fiber $M$. For any characteristic class of vector bundles $p \in H^{*+2d}(\BSO{2d}; \QQ)$, we will define a \defterm{generalized Miller-Morita-Mumford class} (or just \defterm{kappa class}) $\gkappa{p} \in H^*\left(\BDiff(M); \QQ \right)$. These are the simplest examples of characteristic classes of bundles\footnote{A geometric example of such a bundle is a proper submersion $f: E\to B$ of smooth, oriented manifolds that has $M$ as its fiber.}
with fiber $M$ and structure group $\Diff M$.

We are mainly interested in the case where the fiber is $\sumspheres{d}$, the connected sum of $g$ copies of $S^d \times S^d$. More generally, we let the fiber to be a highly-connected manifold (see Definition~\ref{def:highly-connected-manifold}) of genus $g$ and dimension $2d$, denoted $M_g^{2d}$ or $M_g$.  Recall that $H^*(\BSO{2d}; \QQ) = \QQ[p_1, \ldots, p_{d-1}, e]$, where $p_i$ is the Pontryagin class of degree $4i$ and $e$ is the Euler class of degree $2d$. Let $\tbasisl \subset H^*(\BSO{2d}; \QQ)$ consist of the monomials in  the Pontryagin classes and the Euler class. For each such monomial, there is a corresponding MMM class in $H^*\left( \BDiff M_g; \QQ \right)$, which gives rise to a map 
\begin{equation*}
    \taut_{d}\co \QQ[\gkappa{p} \mid p \in \tbasisl] \to H^*(\BDiff M_g; \QQ).  \end{equation*}

This paper presents a large family of polynomials in the MMM classes that lies in the kernel of the map $\taut_d$, in the case that $d$ is odd. In the $d > 1$ case, ours are the first results of this kind. In the $d=1$ case, we recover previously known results, but using purely homotopy theoretic methods. Our first main result is the following.

\begin{theorem}\label{thm:finite-generation}
    The image of $\taut_d$ is finitely-generated as a $\QQ$-algebra when $d$ is odd and $g > 1$.
\end{theorem}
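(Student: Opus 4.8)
\emph{Overall strategy.} The plan is to show that the image $R := \image(\taut_d) \subseteq H^*(\BDiff M_g; \QQ)$ is generated by those kappa classes $\gkappa{p}$ with $p \in \tbasisl$ of bounded degree; equivalently, to produce for every monomial $p$ of large degree a polynomial relation expressing $\gkappa{p}$ in terms of kappa classes of strictly smaller degree. Every such relation will be extracted, via fiber integration $\pi_!$, from a relation in the cohomology of the total space of the universal bundle, and the finiteness will ultimately come from invariant theory for the symplectic group at the fixed rank $2g$.

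\emph{Setup and first reductions.} Let $\pi \colon E \to B := \BDiff M_g$ be the universal $M_g$-bundle, with vertical tangent bundle classified by $\tau \colon E \to \BSO{2d}$, so that $\gkappa{p} = \pi_! \tau^* p$. The fiber cohomology $H^*(M_g;\QQ)$ is concentrated in degrees $0, d, 2d$, with $H^0 = H^{2d} = \QQ$ and $H^d \cong \QQ^{2g}$. Since $d$ is odd one has $\chi(M_g) = 2 - 2g$, which is nonzero precisely because $g \neq 1$; this lets me define a fiberwise orientation class $\omega := e(T_\pi)/\chi \in H^{2d}(E;\QQ)$ restricting to the generator on each fiber, with $\pi_! \omega = 1$. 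I would then record two structural facts about the classes $\tau^* p$ inside $H^*(E)$: first, $e(T_\pi) = \chi\,\omega$; second, each Pontryagin class $p_i(T_\pi)$ restricts to zero on every fiber, since for odd $d$ its degree $4i$ is none of $0, d, 2d$, so that $p_i(T_\pi)$ has positive Serre filtration.

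\emph{The cohomology of $E$ over $H^*(B)$.} The heart of the argument is to describe $H^*(E;\QQ)$ as an algebra over $H^*(B;\QQ)$ through the Serre spectral sequence of $M_g \to E \to B$. The local systems in fiber-degrees $0$ and $2d$ are trivial and are split off by $1$ and $\omega$; because $\mathcal{H}^t = 0$ for $t > 2d$ there are no differentials into fiber-degree $2d$, so the fiber-top associated graded is generated over $H^*(B)$ by $\omega$, and $\pi_!(x)$ is exactly the $\omega$-coefficient of $x$. The fiber-degree-$d$ part is a rank-$2g$ symplectic local system $V$ (antisymmetric because $d$ is odd), with monodromy in $\mathrm{Sp}_{2g}(\ZZ)$, whose internal pairing $V \otimes V \to H^{2d} = \QQ$ I would access through the fiberwise diagonal class in $H^{2d}(E \times_B E)$. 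I would then establish the self-intersection relation $\omega^2 = \pi^*(\gkappa{e^2}/\chi^2)\,\omega + (\text{fiber degree} \le d)$ and its analogues, which reduce all powers of $e$, and dispose of Pontryagin factors using their positive filtration. The output is a normal form $\tau^* p = \pi^*\alpha + \pi^*\beta\cdot\omega + (\text{fiber-degree-}d\text{ terms})$ with $\alpha, \beta$ polynomials in lower kappa classes, whence $\gkappa{p} = \beta$, and iterating expresses high-degree kappa classes through low-degree ones.

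\emph{Finite generation and the main obstacle.} The main obstacle is the fiber-degree-$d$ contribution: products of middle classes feed back, via $\pi_!$ and the symplectic form, into genuinely new kappa classes, and these must be bounded uniformly in the degree of $p$. This is where I expect the real work to lie, and where $g > 1$ is essential. The relevant quantities are $\mathrm{Sp}_{2g}(\ZZ)$-invariants built from $V$, and I would invoke the first fundamental theorem of invariant theory for $\mathrm{Sp}_{2g}$, together with finite generation of invariant rings of reductive groups, to show that at fixed $g$ only finitely many algebraically independent such invariants occur, so that only finitely many kappa classes are indecomposable. Pinning down this symplectic input, verifying that the middle-degree corrections do not obstruct the normal form, and checking that $g \geq 2$ supplies both $\chi \neq 0$ and a symplectic group of large enough rank for the invariant-theoretic bound to bite, are the steps I expect to be the most delicate.
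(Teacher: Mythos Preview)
Your Serre-spectral-sequence setup and the normal-form idea for $\tau^*p$ are in the right spirit, but the argument has a genuine gap at exactly the point you flag as delicate. Invoking the first fundamental theorem for $\mathrm{Sp}_{2g}$ and finite generation of invariant rings does not deliver what you need. Those results describe the representation-theoretic structure of $V^{\otimes k}$, not the cohomology groups $H^*(B; V^{\otimes k})$ in which your middle-degree terms actually live; and even granting that, ``only finitely many algebraically independent invariants'' is a Krull-dimension bound, not a finite-generation statement. An algebraic-dependence relation among kappa classes does not let you solve for a high-degree $\gkappa{p}$ as a \emph{polynomial} in lower ones, which is what decomposability requires. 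Your outline contains no mechanism that forces such a polynomial expression.

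The paper closes this gap not with invariant theory but with a concrete nilpotence result. The linear-algebra input is that $\Alt{V^{\otimes(2g+1)}} = 0$ for $\rank V = 2g$; since the secondary pushforward $\xi(a)$ of an even-degree class $a \in \ker\pi_!$ lands in odd degree in $H^*(B;\tH)$, one gets $\xi(a)^{2g+1} = 0$ and hence $\pi_!(a\cup b)^{2g+1} = 0$ rationally for any $a,b \in \ker\pi_!$ with $\deg a$ even (Theorem~\ref{thm:fund-relations}). To convert this into decomposability, the paper does \emph{not} stay inside the universal total space: it passes to the two-marked-point bundle $\Mgn{\{1,\mpoint\}} \to \Mgn{1}$ and applies the nilpotence theorem to $a = \intclass{1\mpoint} - \gpsi{e}{\mpoint}/\chi$ and $b = \gpsi{p}{\mpoint} - (\gpsi{e}{\mpoint}/\chi)\gkappa{p}$, obtaining $\bigl(\gpsi{p}{1} - \gkappa{ep}/\chi - \gpsi{e}{1}\gkappa{p}/\chi + \gkappa{e^2}\gkappa{p}/\chi^2\bigr)^{2g+1} = 0$ in $H^*(\Mgn{1})$. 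Multiplying by $\gpsi{q}{1}$ and pushing down to $\Mg$ shows that $\gkappa{p^{2g+1}q}$ is decomposable whenever $\deg p > 2d$ (Lemma~\ref{lem:finite-generation-ideals}), and finite generation follows. The intersection class $\intclass{1\mpoint}$---absent from your outline---is what makes a class $a$ of the right shape available; working only inside $E$ you cannot manufacture it.
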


 In Proposition~\ref{pro:taut-mod-radical-generators}, we also show that for all odd $d$, the Krull dimension of the image of $\taut_d$ is at most $2d$.

Our methods generalize the technique Randal-Williams developed for the $d=1$ case in~\cite{oscar12-rel-revisited}, which in turn is based on the work of Morita~\cite{morita89-families1}. They allow us to present many specific elements in $\ker \taut_d$. For instance, Randal-Williams found various relations among the images of the  classes 
\begin{equation*}
    \kappa_i := \gkappa{e^{i+1}} \in H^{2di}\left( \BDiff M_g ;\QQ \right)
\end{equation*}
under the map $R_d$ in the case when $d = 1$. We find that the same relations hold for any odd $d$ (see section~\ref{sec:all-oscar-relations} for details and examples). This is surprising, as no map between subrings of $H^*(\BDiff M^{2d}_g)$ for different $d$ that takes $\kappa_i$ to $\kappa_i$ can preserve the grading on the cohomology.

  \sectwo{Manifolds with a fixed disk and homological stability}\label{sec:intro-fixed-disks}
Let $\tbasiss \subset \tbasisl$ be the set of monomials in the classes\footnote{
We use the notation $\ceil{\cdot}$ and $\floor{\cdot}$ for rounding up and down (respectively) to the nearest integer.
} $p_{\ceil{\frac{d+1}{4}}}$, $p_{\ceil{\frac{d+1}{4}}+1}$, ..., $p_{d-1}$, and $e$ of total degree greater than $2d$. Let $\taut'_d$ denote the map $\taut_d$ restricted to $\QQ[\gkappa{p} \mid p \in \tbasiss]$.  Our second main result is

\begin{theorem}\label{thm:lowest-relation-prime}
    If $d \equiv 3\pmod 4$, the map $\taut'_d$ has nontrivial kernel in degree $2g+2$. If $d \equiv 1 \pmod 4$, the map $\taut'_d$ has nontrivial kernel in degree $6g+6$.
\end{theorem}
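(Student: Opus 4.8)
The plan is to produce the relations not among the $\gkappa{c}$ directly, but as the shadow of a vanishing characteristic class of the middle-dimensional cohomology local system of the universal bundle; fibre integration then converts that vanishing into a polynomial identity among the $\gkappa{c}$. Write $\pi\colon E\to B:=\BDiff M_g$ for the universal $M_g$-bundle, so that $\gkappa{c}=\pi_!\bigl(c(T_\pi E)\bigr)$, where $T_\pi E$ is the vertical tangent bundle and $\pi_!$ denotes integration along the fibre. The basic object is the local system $V:=R^d\pi_*\QQ$ on $B$, of rank $2g$; since $d$ is odd, the fibrewise cup product $V\otimes V\to R^{2d}\pi_*\QQ\cong\QQ$ is skew, so the monodromy of $V$ lands in $\operatorname{Sp}_{2g}$ and $V$ admits a Lagrangian (maximal compact $U(g)$) reduction $\mathbb{E}$ of rank $g$. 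The geometric incarnation of the symplectic form is obtained on the fibre product $E\times_B E$: the fibrewise diagonal carries a class $\Delta\in H^{2d}(E\times_B E)$, and I would isolate the component $\mu$ of $\Delta$ lying in the $V\otimes V$ summand of the fibrewise Künneth decomposition. Although no individual class of $V$ extends over $B$, the monodromy-invariant combination $\mu$ does, and fibrewise one computes $\mu^2=\pm 2g\,(\omega\times\omega)$ and $\mu^3=0$, where $\omega$ is the fibre fundamental coclass. The appearance of $\operatorname{rank}V=2g$ here is the seed of all genus dependence.

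Next I would express the characteristic classes of $V$ as polynomials in the $\gkappa{c}$. Because $V$ is flat its rational Pontryagin classes vanish, but as a flat symplectic bundle it carries the Borel classes in degrees $2,6,10,\dots$, equivalently the odd Chern classes of $\mathbb{E}$; the even Chern classes are then determined by the Mumford-type relation $c(\mathbb{E})\,c(\overline{\mathbb{E}})=1$. The plan is to realize these by pushing forward products of $\mu$ with characteristic classes of $T_\pi E$ over the factors of a fibre product $E\times_B\cdots\times_B E$ — a family Grothendieck–Riemann–Roch computation — obtaining explicit formulas for $c_i(\mathbb{E})$ as polynomials in the $\gkappa{c}$ with $c\in\tbasiss$. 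This is also the structural reason behind Theorem~\ref{thm:finite-generation}: the image of $\taut_d$ is governed by these finitely many classes of the rank-$g$ bundle $\mathbb{E}$.

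The relation then comes for free from the rank bound $c_{g+1}(\mathbb{E})=0$: substituting the realization of $c_{g+1}(\mathbb{E})$ produces a polynomial in the $\gkappa{c}$ lying in the kernel of $\taut'_d$. Its degree is $(g+1)$ times the degree of the lowest Borel generator that is actually realized inside the $\tbasiss$-subalgebra, and this is where the dichotomy enters: the coefficients of the Grothendieck–Riemann–Roch expansion carry a sign $(-1)^{(d-1)/2}$, which for $d\equiv 3\pmod 4$ realizes the degree-$2$ generator nontrivially, placing the relation $c_{g+1}(\mathbb{E})=0$ in degree $2(g+1)=2g+2$, but for $d\equiv 1\pmod 4$ forces the degree-$2$ realization to vanish, so the lowest available generator sits in degree $6$ and the same bound lands in degree $6(g+1)=6g+6$.

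The hard part will be nontriviality: I must show that the polynomial obtained is not the zero polynomial and that it is genuinely supported on the restricted generating set $\tbasiss$. This reduces to controlling the leading coefficient of the Grothendieck–Riemann–Roch expression — a product of Bernoulli numbers and binomial coefficients — and checking that it is nonzero and that no cancellation occurs among the finitely many monomials of the given degree. Since $2g+2$ and $6g+6$ lie far above the homological-stability range $*\le (g-4)/2$ in which $\taut'_d$ is injective, there is no tension with the stable algebraic independence of the $\gkappa{c}$: these relations are a purely unstable phenomenon, detected by the finite rank of $V$.
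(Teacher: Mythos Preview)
Your plan contains a real idea --- that the rank-$2g$ local system $V=R^d\pi_*\QQ$ is the source of genus-dependent relations --- and in fact the paper's proof of the underlying Theorem~\ref{thm:fund-relations} does pass through $V$: one factors $\pi_!(a\cup b)$ as a pairing of secondary pushforwards $\xi(a),\xi(b)\in H^*(B;V)$, and then uses that an odd-degree class in $H^*(B;V)$ has $(2g+1)$st power torsion because $\Alt{V^{\otimes 2g+1}}=0$. But the superstructure you build on top of this is not justified, and the paper's route to Theorem~\ref{thm:lowest-relation-prime} is dramatically simpler.

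The serious gap is your ``family Grothendieck--Riemann--Roch'' step. For $d=1$ one has Mumford's formula because the Hodge bundle $\mathbb{E}=\pi_*\omega_{E/B}$ is the pushforward of an actual sheaf, and GRR computes $\operatorname{ch}(\mathbb{E})$ in terms of $\kappa$-classes. For $d>1$ there is no such holomorphic input: your $\mathbb{E}$ is only a maximal-compact reduction of a flat symplectic bundle, not $R\pi_*$ of anything, and a GRR-type identity relating $c_i(\mathbb{E})$ to the $\gkappa{c}$ is not available. The push-forward of products of $\mu$ with vertical characteristic classes that you propose will produce elements of $H^*(B;\QQ)$, not of $H^*(B;V\otimes V)$ in a way that recovers $c_i(\mathbb{E})$. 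Your dichotomy explanation (a sign $(-1)^{(d-1)/2}$ killing the degree-$2$ realization) is also not substantiated; nothing in the GRR formalism produces such a cancellation.

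By contrast, the paper deduces Theorem~\ref{thm:lowest-relation-prime} in two lines from Theorem~\ref{thm:fund-relations}. Set $s=\ceil{(d+1)/4}$ and take $a=p_s(T_\pi E)\in H^{4s}(E)$. Since $4s<2d$ we have $\pi_!(a)=0$, and $\deg a$ is even, so equation~\eqref{eq:fund-relation-square} gives $\gkappa{p_s^2}^{\,g+1}=0$ rationally. Now $\deg\gkappa{p_s^2}=8s-2d$ equals $2$ when $d\equiv 3\pmod 4$ and $6$ when $d\equiv 1\pmod 4$; the dichotomy is simply that $d+1$ is or is not divisible by $4$, which governs the degree of the smallest available Pontryagin class in $\tbasiss$. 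Since $p_s^2\in\tbasiss$, the monomial $\gkappa{p_s^2}^{\,g+1}$ is a manifestly nonzero element of $\QQ[\gkappa{p}\mid p\in\tbasiss]$, so the kernel of $\taut'_d$ is nontrivial in degree $2(g+1)$ or $6(g+1)$. No Chern classes of $\mathbb{E}$, no GRR, and no nontriviality computation of Bernoulli coefficients are needed.
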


By contrast, the map $\taut'_d$ is known to be injective in a range of cohomological degrees $* \leq (g-4)/2$ when the fiber is $\sumspheres{d}$ and $d \neq 2$.  This fact and the related phenomenon of \defterm{homological stability} are a large part of the motivation for our work. We now describe them in more detail.

  Let $\Diff(M_g, D^{2d}) \subset \Diff(M_g)$ the subgroup of those diffeomorphisms that fix pointwise a chosen disk in $M_g$, and let $f\co \BDiff(M_g,D^{2d}) \to \BDiff(M_g)$ be the map induced on the bar constructions by the inclusion of groups.  We define the map $\taut_{\delta, d}\co \QQ[\gkappa{p} \mid p \in \tbasiss] \to H^*(\BDiff(M_g, D^{2d}); \QQ)$ as the map that makes the following diagram commute. (The $\delta$ stands for ``fixed disk''. See Appendix~\ref{sec:low-pontryagins} for a comparison of the images of the various maps in the diagram.)
\begin{equation}\label{eq:taut-taut-prime-diagram}
\xymatrix{
  \QQ[ \gkappa{p} \mid p \in \tbasisl] \ar[rr]^{\taut_d} && H^*(\BDiff M_g;\QQ ) \ar[d]^{f^*} \\
  \QQ[ \gkappa{p} \mid p \in \tbasiss]  \ar[rr]^{\taut_{\delta,d}} \ar[urr]^{\taut'_d} \ar@{^(->}[u]_{i}&& H^*\left(\BDiff\left(M_g, D^{2d}\right);\QQ \right)
}
\end{equation}

The following fact, in the $d=1$ case, is a consequence of the Madsen-Weiss theorem~\cite{madsen-weiss07} and the Harer stability theorem~\cite{harer85-stability}, with the improved stability range by Boldsen~\cite{boldsen12}. In the case when $d>2$, the fact is a consequence of two theorems of Galatius and Randal-Williams~\cite{soren-oscar12-limit, soren-oscar12-stability}.

\begin{fact}\label{fact:stability}
    If $M_g = \sumspheres{d}$ and $d \neq 2$, the map $\taut_{\delta, d}$ is an isomorphism in the range of cohomological degrees $* \leq (g-4)/2$. Thus, the map $\taut'_d$ is injective in the same range of degrees.

    For $d=1$, the range of degrees can be improved to $* \leq 2g/3$.
\end{fact}

In particular, the ring $H^*(\BDiff(\sumspheres{d}, D^{2d}); \QQ)$ satisfies \defterm{homological stability}: it is independent of $g$ in a range of cohomological degrees.  Theorem~\ref{thm:lowest-relation-prime} implies that this range of cohomological degrees cannot be improved beyond $* \leq 2g+1$.  

 In Appendix~\ref{sec:low-pontryagins}, we prove another version of Theorem~\ref{thm:finite-generation}. 
\begin{namedtheorem*}{Theorem~\ref{thm:finite-generation-bdry}}
    The image of $\taut_{\delta, d}$ is finitely-generated as a $\QQ$-algebra when $d$ is odd and $g > 1$.
\end{namedtheorem*}

\sectwo{Comparison with known results for surface bundles} \label{sec:surface-case}
In the $d=1$ case, the fiber of our bundle is an oriented genus-$g$ surface $\Sigma_g = M_g^2 = \sumspheres{1}$ and the generalized Miller-Morita-Mumford classes correspond to the classical ones, with $\kappa_i = \gkappa{e^{i+1}} \in H^{2i}\left(\BDiff (\Sigma_g, D^2);\QQ \right)$. 
    The map $\taut_1$ takes the form
\[\taut_1\co \QQ[\kappa_1, \kappa_2, \ldots] \to H^*\left(\BDiff \Sigma_g;\QQ \right).\]

The ring of characteristic classes of surface bundles in rational cohomology coincides with the cohomology of the moduli space of Riemann surfaces $\cM_g$ since 
\[H^*(\BDiff \Sigma_g; \QQ) = H^*(B\Gamma_g; \QQ) = H^*(\cM_g; \QQ)\]
where $\Gamma_g$ is the orientation-preserving mapping class group. (The first equality follows from the theorem of Earle and Eells~\cite{earle-eells69}, which implies that the natural group homomorphism $\Diff \Sigma_g \to \Gamma_g$ is a homotopy equivalence, and thus the bar constructions are weakly homotopy equivalent. The second is true only in rational cohomology and follows from Teichm\"uller theory, see~\cite[\S 12.6]{farb-margalit11} for an overview).  

The image of $\taut_1$ can therefore be thought of as a subring of $H^*(\cM_g; \QQ)$. This subring coincides with the classical \defterm{tautological ring}, as defined in~\cite{mumford83-enumerative}. Techniques of algebraic geometry and low-dimensional topology (hyperbolic geometry, in particular) have been used to obtain many results about the structure of this ring. For example, since $\cM_g$ is a $(6g-6)$-dimensional orbifold, the image of $\taut_1$ must vanish above that degree, and thus be a finite-dimensional vector space over $\QQ$.

More precise results are known; we list the most relevant ones. The image of the map $\taut_1$ is trivial above degree $2(g-2)$   by a theorem of Looijenga~\cite{looijenga95}, and in degree $2(g-2)$ it is one-dimensional~\cite{faber99-conjectural, looijenga95}. Morita~\cite{morita03-generators} showed that the kernel of $\taut_1$ is non-trivial in degree $2\floor{g/3}+2$. However, $\taut_1$ is an isomorphism in degrees $\leq 2\floor{g/3}$ according to Fact~\ref{fact:stability} together with the fact that the map $f^*\co H^*(\BDiff \Sigma_g; \ZZ) \to H^*(\BDiff (\Sigma_g, D^2); \ZZ)$ is an isomorphism in the same range of degrees~\cite{harer85-stability, boldsen12}. For two conjectural complete descriptions of $\ker \taut_1$, which differ for $g > 23$ but are known to be true for $g \leq 23$, see~\cite{faber99-conjectural, pandharipande-pixton13}.

Since the relations in Theorems~\ref{thm:lowest-relation-prime} and~\ref{thm:finite-generation} have high cohomological degree, they follow from Looijenga's theorem in the $d=1$ case. We provide a new proof for the relations of lower degree obtained by Randal-Williams in~\cite{oscar12-rel-revisited}, including all of the existing relations for $g \leq 5$. It is unclear whether our strengthening of Randal-Williams' methods can result in genuinely new relations in the $d=1$ case.

\sectwo{Outline of the paper}

In Section~\ref{sec:definitions-and-main-result}, we define the generalized Miller-Morita-Mumford classes. We then state the main technical result of the paper and the primary source of our relations, Theorem~\ref{thm:fund-relations}.  We outline its proof and apply it to prove Theorem~\ref{thm:lowest-relation-prime}.

The details of the proof of Theorem~\ref{thm:fund-relations} take up sections~\ref{sec:spectral-seq-argument} and~\ref{sec:remainder-of-proof}. In the special case of surface bundles, this work leads to a stronger statement and a new proof of a result of Morita~\cite[Section 3]{morita89-families1}. 

In section~\ref{sec:using-big-theorem}, we use Theorem~\ref{thm:fund-relations} to prove Theorem~\ref{thm:finite-generation} and our other results.  These calculations use methods Randal-Williams developed for surface bundles in~\cite{oscar12-rel-revisited}, originally based on Morita's result. 

Appendix~\ref{sec:low-pontryagins} discusses the relationship between the maps $\taut_d, \taut'_d$ and $\taut_{\delta, d}$, and proves Theorem~\ref{thm:finite-generation-bdry}. Appendix~\ref{sec:pushforward-madness} discusses alternative definitions of the pushforward map on cohomology, which is a crucial ingredient in defining the MMM classes.

\sectwo{Acknowledgments}
I am deeply grateful to my thesis advisor, S\o{}ren Galatius, for his constant support and sage advice. I am also very grateful for the numerous conversations with and the insight provided by  Oscar Randal-Williams. This work would not exist without their support. I thank Alexander Kupers and Jeremy Miller for numerous helpful conversations, as well as suggestions about the content of Appendix~\ref{sec:pushforward-madness}. I thank Sam Lichtenstein for his advice on linear algebra that improved section~\ref{sec:alternating-tensor-section}.  I thank the reviewers for their many insightful comments, corrections, and help with the organization of the paper.  

This work was supported by an NSF Graduate Research Fellowship. I was also supported in part by the Danish National Research Foundation and the Centre for Symmetry and Deformation through a Nordic Research Opportunity grant, by the NSF grant DMS-1105058 and, while I was in residence at the MSRI in Berkeley, CA during the Spring 2014 semester, by the NSF Grant No.\ 0932078000.  I am very grateful to the Stanford University Department of Mathematics for all the support during my years as a graduate student. Part of this work was done during my very pleasant visit to the University of Copenhagen, which I thank for its warm hospitality.
  
\secone{Definitions and our main technical result}\label{sec:definitions-and-main-result}
In this section, we give a more precise definition for terms used in the introduction. We then state the main technical result of this paper and give an informal outline of its proof. Finally, we apply it to prove Theorem~\ref{thm:lowest-relation-prime}. 

Let $M$ be an oriented smooth closed connected manifold and $\Diff M$ is the topological group of \emph{orientation-preserving} diffeomorphisms of $M$ with the $C^\infty$ topology.

\begin{definition}\label{def:manifold-bundle}
    By an \defterm{oriented manifold bundle} (or just \defterm{manifold bundle}), we mean a bundle $E \to B$ with fiber $M$ and structure group $\Diff M$.
\end{definition}

\sectwo{Pushforward maps}\label{sec:pushforward-intro}
 For an oriented manifold bundle $\pi\co E \to B$ with fiber $M$, there is a map of abelian groups $\pi_!\co H^{*+ \dim M}(E; \ZZ) \to H^*(B; \ZZ)$ called the \defterm{pushforward map}, also known as the \defterm{umkehr map} or the \defterm{Gysin homomorphism}. Note that when $\dim M \neq 0$, $\pi_!(1) = 0$  because of the change of cohomological degree, and thus $\pi_!$ is not a ring map. We will give its definition (originally from~\cite{borel-hirzebruch59}) in a more general setting in section~\ref{sec:pushforwards-definition}, Definition~\ref{def:ss-pushforward}.  

To give a little substance to this notion, we mention that in the special case when $E$ and $B$ are closed oriented manifolds, the map $\pi_!$ coincides with the composition of Poincar\'e duality in $E$, the  natural map on homology induced by $\pi$, and Poincar\'e duality in $B$. When restricted to de-Rham cohomology, the map coincides with integration along the fiber (these equivalences are discussed in detail in~\cite{boardman69}).

For our present purposes, it is sufficient to recall one non-trivial property of $\pi_!$. The pushforward map is natural in the sense that, if we form a pullback diagram of manifold bundles
\begin{equation} \label{eq:pullback-of-mfld-bundles}
    \xymatrix{
        f^*(E) \ar[d]^{\pi'} \ar[r]^{f'} & E \ar[d]^{\pi} \\
        A \ar[r]^f & B
    }
\end{equation}
then for any $a \in H^*(E)$, we have $f^*\left(\pi_!(a)\right) = \pi'_!\left(f'^*(a)\right)$.

Further properties of the pushforward map are discussed in Section~\ref{sec:further-properties-pushforwards}.

\sectwo{Definition of the Miller-Morita-Mumford classes}\label{sec:definition-mmm-classes}

Let $P \to B$ be the principal $\Diff M$-bundle corresponding to the manifold bundle $E\to B$.  The group $\Diff M$ acts on the total space of the tangent bundle $TM$ as well as on $M$, and the bundle map $TM \to M$ is equivariant with respect to this action. So, the map \[P \times_{\Diff M} TM  \to P \times_{\Diff M} M = E\] can be given the structure of a bundle over $E$ with the same fiber and structure group as the bundle $TM \to M$.

\begin{definition}\label{def:vertical-tangent-bundle}
   The \defterm{vertical tangent bundle} $T_\pi E$ is the vector bundle of rank $\dim M$ over $E$ defined by the above map. 
\end{definition}

\begin{remark}
    In the special case when the bundle map $E \to B$ is a smooth map between smooth manifolds, 
         the vertical tangent bundle coincides with the sub-bundle of $TE$ that is the kernel of the derivative $Df\co TE \to TB$.
\end{remark}

Since we only consider orientation-preserving diffeomorphisms, $T_\pi E$ is an \emph{oriented} vector bundle. Its characteristic classes determine a map $\gamma\co H^*(\BSO{\dim M}; \ZZ) \to H^*(E; \ZZ)$.

\begin{definition} \label{def:kappa-classes} Let $E \to B$ be a manifold bundle with $m$-dimensional fiber and $p \in H^{l+m}(\BSO{m}; \ZZ)$. The corresponding \defterm{generalized Miller-Morita-Mumford class} or \defterm{kappa class} is defined as follows.
\begin{equation*}
\gkappa{p}\pbundle{E}{B} := \pi_!\left(\gamma^*(p)\right) \in H^l(B; \ZZ).
\end{equation*}
\end{definition}

 The kappa classes are natural with respect to pullbacks of bundles because of the naturality property of pushforwards. To be more precise, the following diagram will commute in the context of the pullback diagram~\eqref{eq:pullback-of-mfld-bundles}. 
\begin{equation*} \vcenter{\xymatrix@C+=.5in{
H^{*+m} (\BSO{m}; \ZZ) \ar[dr]_(.4){\scriptstyle p \mapsto \gkappa{p}{\scriptscriptstyle \pbundle{f^*(E)}{A}}} \ar[r]^-{\scriptstyle p \mapsto \gkappa{p}{\scriptscriptstyle \pbundle{E}{B}}} &
H^*(B; \ZZ)  \ar[d]^{f^*}\\
& H^*(A; \ZZ)
}}
\end{equation*}

Every manifold bundle is a pullback of the universal bundle over $\BDiff M$. So, the kappa classes for any bundle are pullbacks of universal classes $\gkappa{p} \in H^*\left( \BDiff M ; \ZZ\right)$.  Similarly, for $p \in H^{*+m} (\BSO{m}; \QQ)$ there are classes  $\gkappa{p}\pbundle{E}{B} \in H^*(B; \QQ)$ and $\gkappa{p} \in H^*\left( \BDiff M ; \QQ \right)$.

\sectwo{Key source of the relations}
Let us state the main technical result that underlies the relations discussed in this paper. We will give an informal outline of the proof at the end of this section and postpone all details to sections~\ref{sec:beginning-of-proof} and~\ref{sec:end-of-proof}.

We will consider bundles with fiber in the following class of manifolds. This class includes the connected sum of $g$ copies of $S^d \times S^d$. It also includes, for example, connected sums of a space $Q$, which is the total spaces of a bundle with fiber $S'$ and base space $S''$, where $S'$ and $S''$ are smooth homotopy $d$-spheres.

\begin{definition}\label{def:highly-connected-manifold}
    By a \defterm{highly-connected manifold of genus $g$}, we mean a $2d$-dimensional $(d-1)$-connected smooth oriented closed manifold with middle cohomology isomorphic to $\ZZ^{2g}$. Throughout the paper, $M_g$ represents such a manifold.
\end{definition}

\begin{remark}
 If $M$ is an oriented closed smooth $2d$-dimensional $(d-1)$-connected manifold, the Universal Coefficient theorem implies that $H^d(M; \ZZ) \cong \Hom(H_d(M), \ZZ)$, which is a free group. Poincar\'e duality and the fact that $d$ is odd imply that the rank of this group must be even.  
 So, $M$ is a highly-connected manifold of genus $g$ for some integer $g$.
\end{remark}

\begin{theorem}
\label{thm:fund-relations}
Let  $d$ be an \emph{odd} natural number and $M_g$ be a $2d$-dimensional highly-connected manifold of genus $g$.
Let $\pi\co E \to B$ be an oriented manifold bundle with fiber $M_g^{2d}$ and let $a, b \in H^*(E; \ZZ)$ be two classes such that $\pi_!(a) = 0$, $\pi_!(b) = 0$, and $\deg(a)$  is \emph{even}. 

Then, the classes $\pi_!(a \cup a) \in H^{2\deg(a) - 2d}(B; \ZZ)$ and $\pi_!(a \cup b) \in H^{\deg(a) + \deg(b) - 2d}(B; \ZZ)$ satisfy the following two relations.
\begin{equation}
  \label{eq:fund-relation-square}
  (2g+1)! \cdot \pi_!(a \cup a)^{g+1} = 0. 
\end{equation}
\begin{equation}
  \label{eq:fund-relation-product}
  (2g+1)! \cdot \pi_!(a \cup b)^{2g+1} = 0.
\end{equation}
(Note the larger power in the second relation.)
\end{theorem}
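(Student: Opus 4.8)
The plan is to analyze the bundle through its Leray--Serre spectral sequence and to reduce both relations to a single piece of linear algebra about the symplectic local system carried by the middle cohomology of the fibers. Since $M_g^{2d}$ has rational cohomology concentrated in degrees $0$, $d$, and $2d$, with $H^0 \cong H^{2d} \cong \QQ$ and $H^d(M_g) \cong \QQ^{2g}$, the spectral sequence of $\pi$ has only three nonzero rows. Write $V := R^d\pi_*\QQ$ for the rank-$2g$ local system over $B$ whose stalk is the middle cohomology. Because $d$ is odd, the fiberwise cup product $H^d(M_g)\otimes H^d(M_g)\to H^{2d}(M_g)=\QQ$ is \emph{antisymmetric} and nondegenerate, so it is a symplectic form $\omega \in \Lambda^2 V^*$. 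Up to the Leray filtration, $\pi_!(x)$ simply records the $H^{2d}$-component of $x$, so the hypotheses $\pi_!(a)=\pi_!(b)=0$ say exactly that $a$ and $b$ have no top-row component, i.e.\ their leading terms live in the rows $q=0$ and $q=d$. Writing $a_d,b_d \in H^*(B;V)$ for the $V$-parts (their images in $E_\infty^{*,d}$), the first step is the product formula
\[
\pi_!(a\cup b) \equiv \langle \omega,\, a_d \cup b_d\rangle \pmod{\text{higher Leray filtration}},
\]
which holds because the only way two classes without a top-row component can produce one is through the $\omega$-pairing of their $V$-parts.

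The second step is the key vanishing, which is symplectic linear algebra made graded. Since $\deg(a)$ is even and $d$ is odd, $a_d$ has \emph{odd} total degree, so by graded-commutativity its iterated cup product lands in an exterior power, $a_d^{\cup k}\in H^*(B;\Lambda^k V)$. As $\dim V = 2g$, we have $\Lambda^{k}V = 0$ for all $k > 2g$. For~\eqref{eq:fund-relation-square} both factors are $a$, so the $(g{+}1)$-st power of $\pi_!(a\cup a)$ assembles $a_d^{\cup 2(g+1)} \in H^*(B;\Lambda^{2g+2}V)=0$; for~\eqref{eq:fund-relation-product} only $a$ is known to be even, so the $(2g{+}1)$-st power of $\pi_!(a\cup b)$ assembles, on the $a$-side, $a_d^{\cup(2g+1)}\in H^*(B;\Lambda^{2g+1}V)=0$. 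This accounts for both exponents and their difference: the extra parity information about $b$ in the square case reaches a vanishing exterior power twice as fast.

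The main obstacle --- and the reason the relations carry the factor $(2g+1)!$ rather than holding on the nose --- is that the product formula above is valid only modulo higher Leray filtration, so the computation above controls merely the \emph{leading term} of the relevant power. To upgrade this to an honest identity in $H^*(B)$ I would carry out the entire computation inside the spectral sequence (the content of Sections~\ref{sec:spectral-seq-argument} and~\ref{sec:remainder-of-proof}), tracking the correction terms coming from the single possibly-nonzero differential $d_{d+1}$ and from the extensions relating $E_\infty$ to $H^*(E)$. The cleanest organization is to pass to a universal, $\operatorname{Sp}_{2g}$-equivariant model in which $\pi_!(a\cup b)$ and its powers are expressed through explicit invariant tensors built from $\omega$, $a_d$, and $b_d$. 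In that language the alternating combination witnessing $\Lambda^{>2g}V=0$ is an \emph{unnormalized} antisymmetrization over $2g+1$ indices; clearing the denominator of the antisymmetrizer is precisely what introduces the constant $(2g+1)!$. Confirming that this constant genuinely cannot be reduced --- which is what ultimately makes Theorem~\ref{thm:lowest-relation-prime} sharp --- is the delicate endpoint of the argument, and I expect it to be the step requiring the most care.
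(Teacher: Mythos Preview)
Your overall strategy---use the Serre spectral sequence, identify the $q=d$ row as cohomology with coefficients in the rank-$2g$ local system $V$, observe that $\pi_!(a\cup b)$ factors through the $\omega$-pairing of the $V$-components, and then kill high powers via $\Lambda^{>2g}V=0$---is exactly the paper's approach. The two exponents arise for precisely the reason you give.

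However, you misidentify where the difficulty lies, and this leads to an incorrect description of the endgame. The product formula is \emph{not} merely valid ``modulo higher Leray filtration'': it is an exact equality in $H^*(B)$. The point is that $\pi_!(a\cup b)$ lives in the top row $q=2d$, and since every differential goes down-and-right, no differential ever hits that row; hence $E_\infty^{*,2d}\hookrightarrow E_2^{*,2d}\cong H^*(B)$ with no quotient. The only ambiguity is in \emph{lifting} $a_d$ from $E_\infty^{*,d}$ to $E_2^{*,d}=H^*(B;V)$, but any lift gives the same product in the top row. There are therefore no correction terms to track, no role for $d_{d+1}$, and no extension problems to solve; the diagram chase is short (Proposition~\ref{pro:decompose-pushforward-e2}).

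Consequently, the factor $(2g+1)!$ has nothing to do with spectral-sequence corrections. It comes entirely from the integral linear algebra in your second step: over $\ZZ$, the class $\iota^{\cup k}$ lies only in the alternating \emph{sub}representation $\Alt H^*(B;V^{\otimes k})$, not in $H^*(B;\Lambda^kV)$, and promoting it costs exactly the factor $k!$ (Lemma~\ref{lem:powers-and-alt}). Over $\QQ$ your exterior-power argument works on the nose and no factor is needed. Finally, the paper does not claim that $(2g+1)!$ is sharp, and Theorem~\ref{thm:lowest-relation-prime} (a rational statement) does not depend on any such sharpness; your last sentence sets up a goal that is neither needed nor attempted.
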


\begin{remark}
Because of the $(2g+1)!$ factor in the statement, the theorem is most useful to give relations for cohomology with rational coefficients. It is likely that this factor can be improved somewhat. In~\cite[Section 3]{morita89-families1}, Morita proved the relation~\eqref{eq:fund-relation-square} in the special case of $d=1$ and $\deg a = 2$ with a factor of $\frac{(2g+2)!}{2^{g+1}(g+1)!}$ instead of $(2g+1)!$.
\end{remark}

\sectwo{An application: proof of Theorem~\ref{thm:lowest-relation-prime}}\label{sec:proof-lowest-relation}
In this section, we illustrate Theorem~\ref{thm:fund-relations} by proving Theorem~\ref{thm:lowest-relation-prime}  as an application. Further applications of Theorem~\ref{thm:fund-relations} that result in more elaborate relations are discussed in Section~\ref{sec:using-big-theorem}.

\begin{proposition}\label{pro:lowest-relation}
    Suppose $d\neq 1$ is an odd integer. Let $s=\ceil{\frac{d+1}{4}}$, and let $p_s$ be the $4s$-dimensional Pontryagin class. Then, 
    \begin{equation*}
        \gkappa{p_s^2}^{g+1} = 0 \in H^{(2\text{ or }6)(g+1)}(\BDiff M_g;\QQ)  \text{ where } \deg \gkappa{p_s^2} = 
        \begin{cases}
            2 & \text{if } d \equiv 3\pmod 4 \\ 
            6 & \text{if } d \equiv 1\pmod 4.
        \end{cases}
    \end{equation*}
\end{proposition}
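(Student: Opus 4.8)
\textbf{The plan} is to derive Proposition~\ref{pro:lowest-relation} as a direct application of Theorem~\ref{thm:fund-relations} by finding a class $a \in H^*(E)$ on the universal bundle over $\BDiff M_g$ whose fiberwise pushforward $\pi_!(a \cup a)$ is (up to a nonzero rational scalar) the kappa class $\gkappa{p_s^2}$. Recall that for $p \in H^{l+2d}(\BSO{2d})$ we have $\gkappa{p} = \pi_!(\gamma(p))$, where $\gamma$ is the map on cohomology induced by the vertical tangent bundle $T_\pi E$. So the natural candidate is $a = \gamma(p_s)$: then $a \cup a = \gamma(p_s^2)$, and $\pi_!(a\cup a) = \pi_!(\gamma(p_s^2)) = \gkappa{p_s^2}$ by definition.

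First I would verify the two hypotheses of Theorem~\ref{thm:fund-relations} for this choice of $a$. The degree condition requires $\deg(a) = \deg \gamma(p_s) = 4s$ to be even, which is automatic since $4s$ is divisible by $4$. The vanishing condition $\pi_!(a) = 0$ requires $\gkappa{p_s} = \pi_!(\gamma(p_s)) = 0$. This is the key point where the specific value $s = \ceil{\frac{d+1}{4}}$ must enter: the class $p_s \in H^{4s}(\BSO{2d})$ lives in degree $4s$, so $\gamma(p_s)$ has degree $4s < 2d$ exactly when $4s < 2d$. Since $\pi_!$ lowers degree by $2d$, the class $\gkappa{p_s} = \pi_!(\gamma(p_s))$ lands in negative cohomological degree $4s - 2d < 0$ and hence vanishes. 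I would check that $s = \ceil{\frac{d+1}{4}}$ does indeed force $4s < 2d$ (equivalently $\deg(a) < 2d$) for odd $d \neq 1$, treating the two residue classes separately: for $d \equiv 3 \pmod 4$ one has $4s = d+1$, so $\deg(a) = d+1$ and $\deg\gkappa{p_s^2} = 2(d+1) - 2d = 2$; for $d \equiv 1 \pmod 4$ one has $4s = d+3$, so $\deg(a) = d+3$ and $\deg\gkappa{p_s^2} = 2(d+3) - 2d = 6$. In both cases $\deg(a) < 2d$ once $d \geq 3$, so $\gkappa{p_s} = 0$, and the degree of $\gkappa{p_s^2}$ matches the two cases in the statement.

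With both hypotheses verified, relation~\eqref{eq:fund-relation-square} of Theorem~\ref{thm:fund-relations} applied to $a = \gamma(p_s)$ on the universal bundle yields
\begin{equation*}
    (2g+1)! \cdot \pi_!(a \cup a)^{g+1} = (2g+1)! \cdot \gkappa{p_s^2}^{g+1} = 0 \in H^*(\BDiff M_g; \QQ).
\end{equation*}
Since we work with rational coefficients, the nonzero integer factor $(2g+1)!$ is invertible, so we may divide it out to conclude $\gkappa{p_s^2}^{g+1} = 0$, which is exactly the claimed relation. Matching this to Theorem~\ref{thm:lowest-relation-prime}, the total degree is $(g+1)\deg\gkappa{p_s^2}$, which equals $2(g+1) = 2g+2$ when $d \equiv 3 \pmod 4$ and $6(g+1) = 6g+6$ when $d \equiv 1 \pmod 4$; moreover $p_s^2 \in \tbasiss$ since $s \geq \ceil{\frac{d+1}{4}}$ and $\deg(p_s^2) = 8s > 2d$, so the relation lives in the domain of $\taut'_d$ and certifies its nontrivial kernel.

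\textbf{The main obstacle} I anticipate is not in the application of the theorem, which is essentially formal once the setup is correct, but in pinning down the elementary arithmetic of $s = \ceil{\frac{d+1}{4}}$ so that $\gkappa{p_s} = 0$ holds and the degree of $\gkappa{p_s^2}$ comes out to exactly $2$ or $6$. One must confirm that $s$ is large enough that $p_s$ is a genuine generator (i.e.\ $s \leq d-1$ so that $p_s$ actually appears among $p_1, \ldots, p_{d-1}$ and lies in $\tbasiss$), yet small enough that $4s < 2d$ forces the pushforward into negative degree. A secondary subtlety worth a sentence is that $\pi_!(a) = 0$ must be checked for the \emph{universal} bundle over $\BDiff M_g$ rather than for a single geometric bundle, but since $\gamma$ and $\pi_!$ are defined universally and the degree argument is insensitive to the base, this causes no difficulty.
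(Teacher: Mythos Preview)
Your proposal is correct and follows essentially the same approach as the paper: take $a=\gamma(p_s)$ on the universal bundle, use $4s<2d$ to get $\pi_!(a)=0$, and apply relation~\eqref{eq:fund-relation-square} of Theorem~\ref{thm:fund-relations} to conclude $\gkappa{p_s^2}^{g+1}=0$ rationally. Your write-up is in fact slightly more thorough than the paper's, since you also verify $p_s^2\in\tbasiss$ and spell out the degree arithmetic for both residue classes.
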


\begin{proof}
    Let $d \geq 3$ is odd.  Let $\pi\co E \to (B = \BDiff M_g)$ be the universal manifold bundle with fiber  $M^{2d}_g$. The $4s$-dimensional Pontryagin class of the vertical tangent bundle gives rise to the class $p_s \in H^{4s}(E;\QQ)$. 
    
    Our choice of $s$ insures that, depending on $d$ mod 4, either $4s = d + 1$ or $4s = d + 3$.  Since under our assumptions $4s < 2d$, we have $\pi_!(p_s) = 0$. Also, $\deg p_s$ is even. Thus, we can apply Theorem~\ref{thm:fund-relations} to obtain the following relation concerning the class $\pi_!\left(p_s^2\right)$, which is either 2- or 6-dimensional.
\[(2g+1)!\pi_!\left(p_s^2\right)^{g+1} = 0 \in H^{(2\text{ or }6)(g+1)}(B; \QQ).\]

    The class $\pi_!\left(p_s^2\right)$ coincides with the class $\gkappa{p_s^2} \in H^{2\text{ or }6}(\BDiff M_g; \QQ)$ by definition. So, rationally $\gkappa{p_s^2}^{g+1} = 0 \in H^{(2\text{ or }6)(g+1)}(\BDiff M_g;\QQ)$ as desired. 
\end{proof}

Since, in the terminology of the introduction, $p_s^2 \in \tbasiss$, Proposition~\ref{pro:lowest-relation} immediately implies Theorem~\ref{thm:lowest-relation-prime} when $d \neq 1$.

Note that Fact~\ref{fact:stability} implies that the class $\gkappa{p_s^2} \in H^*\!\left(\BDiff\left(\sumspheres{d}, D^{2d}\right);\QQ\right)$ is not zero when $g$ is large enough. So, $\gkappa{p_s^2} \neq 0 \in H^*\!\left(\BDiff \left(\sumspheres{d}\right);\QQ\right)$ as well, even though we just showed that $\gkappa{p_s^2}^{g+1} = 0$.

\medskip
When $d=1$ and $g>1$, Theorem~\ref{thm:lowest-relation-prime} follows from Corollary~\ref{cor:kappa-is-decompose}, which in this case is due to Morita (Looijenga's theorem~\cite{looijenga95} is even stronger). The $S^1 \times S^1$ case can be done by replacing $p_s$ with the class $e\gkappa{e^2}$ in the above proof. The $g=0$ case follows from the fact that $\BDiff S^2 \simeq \BSO{3}$ by a theorem of Smale.

\sectwo{Outline of the proof of Theorem~\ref{thm:fund-relations}}\label{sec:overview-proof-disbled}
We aim to prove that a certain power of the class $\pi_!(a \cup b)$ is torsion. If we wanted to prove that $2 \alpha^2 = 0$ for some integral cohomology class $\alpha$, it would be sufficient to decompose it as product of a integral cohomology class of \emph{odd} degree $\beta$  and another class: $\alpha = \beta \cup \gamma$. Our proof is loosely analogous.

In Section~\ref{sec:spectral-seq-argument}, we will use the Serre spectral sequence for the fibration $\pi\co E \to B$ to define the pushforward map on cohomology $\pi_!$. The key result of Section~\ref{sec:spectral-seq-argument} is that, under the assumptions of Theorem~\ref{thm:fund-relations}, the cohomology class $\pi_!(a \cup b)$ is the product of two terms on the $E_2$ page of the spectral sequence, at least one of which -- we call it $\iota$ -- has odd degree (Proposition~\ref{pro:decompose-pushforward-e2}).

The class $\iota$ turns out to be a cohomology class with a $2g$-dimensional, twisted coefficient system. In Section~\ref{sec:remainder-of-proof}, we prove Proposition~\ref{pro:twisted-powers-zero} which implies that since $\deg \iota$ is odd, $\iota^{2g+1}$ is torsion.  We then relate various notions of cup product to conclude that $\pi_!(a \cup b)^{2g+1}$ and $\pi_!(a \cup a)^{g+1}$ are both torsion.

\secone{Spectral sequence argument}\label{sec:spectral-seq-argument}\label{sec:beginning-of-proof}
 In this section, we begin the detailed proof of Theorem~\ref{thm:fund-relations}. A reader more interested in applications might want to skip directly to Section~\ref{sec:using-big-theorem}.

The proof of Theorem~\ref{thm:fund-relations} is most naturally stated in the setting of \defterm{oriented Serre fibrations}. This setting is more general than the setting of manifold bundles. We first define the pushforward map in this generality. Then, our goal is to prove Proposition~\ref{pro:decompose-pushforward-e2}, which in certain cases allows us to decompose cohomology classes of the form $\pi_!(a \cup b)$. 

\sectwo{Oriented Serre fibrations and twisted coefficient systems}
By a \defterm{twisted coefficient system} over $B$, we will mean a bundle of abelian groups over $B$ with some fiber $A$ and the discrete group $\Aut A$, as its structure group. Given a basepoint $* \in B$, twisted coefficient systems correspond bijectively to $\ZZ\left[\pi_1(B,*)\right]$-modules (see e.g.~\cite[Section 5.3]{mccleary01-users-guide}). Moreover, maps and tensor products of twisted coefficient systems correspond to maps and tensor products of $\ZZ\left[\pi_i(B,*)\right]$-modules, respectively.

 Let $E \to B$ be a Serre fibration, $* \in B$ be a chosen basepoint, and $M$ be the homotopy fiber at the basepoint. The homotopy-lifting property of Serre fibrations gives rise to an action of $\pi_1(B,*)$ on the cohomology groups $H^i(M; \ZZ)$ for all $i$. This gives rise to a twisted coefficient system that we denote $\tH^i(M)$. The cup product on cohomology $H^i(M; \ZZ) \otimes H^j(M; \ZZ)\to H^{i + j}(M; \ZZ)$ is a map of $\ZZ\left[\pi_i(B,*)\right]$-modules. So, there is a well-defined cup product on twisted coefficient systems:
\begin{equation}\label{eq:cup-prod-twisted-coeff-collapse}
    \cup\co \tH^i(M) \otimes \tH^j(M)\to \tH^{i + j}(M).
\end{equation}

We are interested in the case where the homotopy fiber is a closed, connected manifold $M^{2d}$. An \defterm{orientation} for such a Serre fibration $E \to B$ is a choice of a trivialization for the twisted coefficient system corresponding to the top cohomology, i.e. a choice of an isomorphism $or\co \tH^{2d}(M) \toiso \ZZ$, where the right-hand side is the untwisted coefficient system over $B$. An \defterm{oriented Serre fibration} is a Serre fibration $E \to B$ that is equipped with a choice of an orientation.

\begin{example}
    Any (oriented) manifold bundle in the sense of Section~\ref{sec:definitions-and-main-result} is an example of an oriented Serre fibration, since the structure group of the manifold bundle preserves the given orientation of the fiber $M$.
\end{example}

\sectwo{Convergence of Serre spectral sequences}
In this section, we recall the features of the convergence theorem for the cohomological Serre spectral sequence that we will need. 

As we will discuss in more detail in Section~\ref{sec:facts-cup-product}, for any coefficient systems $\tA$ and $\tB$ over $B$, there is a notion of \defterm{cohomology with twisted coefficients} and a \defterm{cup product} (different from the one defined in~\eqref{eq:cup-prod-twisted-coeff-collapse})
\begin{equation}\label{eq:cup-prod-twisted-coeff-expand}
    \cup\co\ H^p(B; \tA) \otimes H^q(B; \tB) \to H^{p+q}(B; \tA \otimes \tB).
\end{equation}
Moreover, any map of coefficient systems $f\co \tA \to \tB$ determines a map on cohomology that we will denote $\coeff{f}\co H^*(B; \tA) \to H^*(B; \tB).$

 The Serre spectral sequence for a Serre fibration $\pi\co E \to B$ with fiber $M$ (which, for the purposes of the convergence theorem, can be any CW complex) relates the following two objects:

\begin{enumerate}
\item The cohomology of the total space $H^*(E; \ZZ)$ together with the cup product and a filtration
 \begin{equation}\label{eq:filtration}
   H^*(E; \ZZ) = \cdots = F^{-1} = F^0 H^*(E; \ZZ) \supset F^1 H^*(E; \ZZ) \supset \cdots
\end{equation}
defined as follows. Let $B^{(j)}$ denote the $j$-skeleton of the CW-complex $B$, $J^{(j)} = \pi^{-1}\left( B^{(j)} \right) \subset E$ and $J^{(-1)}=\emptyset$. We set
        \begin{equation*}
            F^iH^*(E) := \ker\left( H^*(E) \to H^*\left( J^{(i-1)} \right) \right) = \image\left( H^*\left( E,J^{(i-1)} \right) \to H^*(E) \right).
        \end{equation*}

Note that this filtration respects the cup product, i.e. the cup product restricts to a map $F^pH^*(E; \ZZ) \otimes F^{p'}H^*(E; \ZZ) \to F^{p+p'}H^*(E; \ZZ)$.

\item The $E_2$ page of the spectral sequence which is the bi-graded ring
    \[E_2^{p,q} := H^p\left(B;\tH^q(M)\right)\] 
with the product specified by the following composition of maps.
 \begin{multline}\label{eq:e2-product-general}
     \bullet\co \ E_2^{p,q} \otimes E_2^{p',q'} =
     H^p\left(B;\tH^{q\vphantom{q'}}(M)\right) \otimes H^{p'}\left(B;\tH^{q'}(M)\right)  \substack{\cup\\ \xrightarrow{\hspace*{0.8cm}} \\ \eqref{eq:cup-prod-twisted-coeff-expand}}  \\
     \hspace*{1.2cm}H^{p+p'}\left(B;\tH^p(M) \otimes \tH^{q'}(M)\right)
     \substack{\coeff{\cup} \\ \xrightarrow{\hspace*{0.8cm}} \\ \eqref{eq:cup-prod-twisted-coeff-collapse}}
     H^{p+p'}\left(B;\tH^{q+q'}(M)\right) = E_2^{p+p',q+q'}
\end{multline}

\end{enumerate}

The convergence theorem relates these two objects by way of the $E_\infty$ page of the spectral sequence:

\begin{theorem}[{Convergence Theorem for the Serre Spectral Sequence,~\cite[Theorem~5.2]{mccleary01-users-guide}}]
\label{thm:convergence-of-serre}
There is a spectral sequence with the $E_2$ page as described above such that the following two definitions of its $E_\infty$ page are equivalent (together with the product structure):
\begin{enumerate}[(a)]
  \item Successive quotients of the filtration~\eqref{eq:filtration} together with the cup product
      \[E_\infty^{p,q} \cong F^pH^{p+q}(E; \ZZ)/F^{p+1}H^{p+q}(E; \ZZ).\]
  \item A sub-quotient of the $E_2$ page obtained by repeatedly taking homology using the differentials in the spectral sequence. Repeatedly taking sub-quotients of a group results in a sub-quotient, so there are subgroups $B^{p,q} \subset Z^{p,q} \subset E_2^{p,q}$ such that 
  \[E_\infty^{p,q} = Z^{p,q} / B^{p,q}.\]
         By a small abuse of language, we write: $Z^{p,q} = \ker(\text{differentials out of }(p,q)\text{ terms})$ and $B^{p,q} = \image(\text{differentials into }(p,q)\text{ terms})$.

  The product structure is induced from the product structure on the $E_2$ page. This uses the fact that all the differentials respect the product on their respective pages of the spectral sequence.
\end{enumerate}
\end{theorem}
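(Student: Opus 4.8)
The plan is to construct the spectral sequence from the skeletal filtration of the base, identify its pages, prove convergence via the associated exact couple, and treat the multiplicative structure last. First I would fix a CW structure on $B$ and set $J^{(p)} = \pi^{-1}(B^{(p)})$, exactly the filtration appearing in the footnote to \eqref{eq:filtration}. The long exact cohomology sequences of the pairs $(J^{(p)}, J^{(p-1)})$ assemble into an exact couple, whose derived spectral sequence has $E_1^{p,q} = H^{p+q}(J^{(p)}, J^{(p-1)})$ and whose $r$-th differential raises the filtration degree by $r$. By construction of an exact couple, the limiting page is the subquotient $Z^{p,q}/B^{p,q}$ of $E_2$ described in part (b), so (b) is automatic; the work is to match it with part (a).

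Next I would compute the $E_1$ and $E_2$ pages. By excision, $H^{p+q}(J^{(p)}, J^{(p-1)})$ splits as a product indexed by the $p$-cells of $B$; restricting the fibration to each closed cell, which is contractible, trivializes it up to homotopy, so each factor contributes a copy of $H^q(M)$, glued together according to the attaching maps. This identifies $E_1^{p,q}$ with the cellular cochain group $C^p_{\mathrm{cell}}(B; \tH^q(M))$ and the differential $d_1$ with the cellular coboundary, giving $E_2^{p,q} = H^p(B; \tH^q(M))$. The point requiring care is that the $\pi_1(B)$-action on $H^q(M)$ obtained from lifting loops coincides with the coefficient system $\tH^q(M)$; this is the standard monodromy verification. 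For convergence I would check that the filtration \eqref{eq:filtration} is exhaustive and that in each total degree only finitely many steps contribute when $B$ is finite-dimensional (passing to a $\lim$/$\lim^1$ argument over skeleta in general), which yields the isomorphism $E_\infty^{p,q} \cong F^pH^{p+q}(E)/F^{p+1}H^{p+q}(E)$ of part (a) and its equivalence with (b).

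The \emph{main obstacle}, and the step I would spend the most effort on, is the multiplicative structure. I would build a product on the filtered complex from a cellular diagonal approximation of $B$ together with the relative cup products on the pairs $(J^{(p)}, J^{(p-1)})$, and verify that the cup product on $H^*(E)$ carries $F^p \otimes F^{p'}$ into $F^{p+p'}$, so that it descends to the associated graded and hence to $E_\infty$. One then checks by induction on pages that each differential is a derivation with respect to the induced product, which is what allows the product on $E_\infty$ to be read off from $E_2$.

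The final identification is that the product induced on $E_2 = H^*(B; \tH^*(M))$ agrees with the composite in \eqref{eq:e2-product-general}, namely the external cup product \eqref{eq:cup-prod-twisted-coeff-expand} followed by the fiberwise cup product \eqref{eq:cup-prod-twisted-coeff-collapse}. Tracing the cellular diagonal through the excision isomorphisms of the second paragraph is what makes these two descriptions coincide. Since this is precisely the classical statement of~\cite[Theorem~5.2]{mccleary01-users-guide}, in practice I would cite it, but the route above is how I would reprove it from scratch.
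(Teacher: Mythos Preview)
The paper does not prove this theorem at all: it is stated as background and attributed to~\cite[Theorem~5.2]{mccleary01-users-guide}, with no argument given. Your sketch is a correct outline of the standard construction (skeletal filtration, exact couple, identification of $E_1$ with cellular cochains in the local coefficient system, convergence, and the multiplicative structure via a diagonal approximation), and you yourself note that in practice one would simply cite McCleary. So there is nothing to compare against; your proposal is strictly more than what the paper supplies, and it matches the classical proof the paper is invoking.
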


\sectwo{Pushforwards and spectral sequences}\label{sec:pushforwards-definition}
 In this section, we assume that the fiber $M^{2d}$ is a $2d$-dimensional oriented closed connected manifold.  
\begin{figure}[ht]

  \centering
  \def\svgwidth{0.98\textwidth}    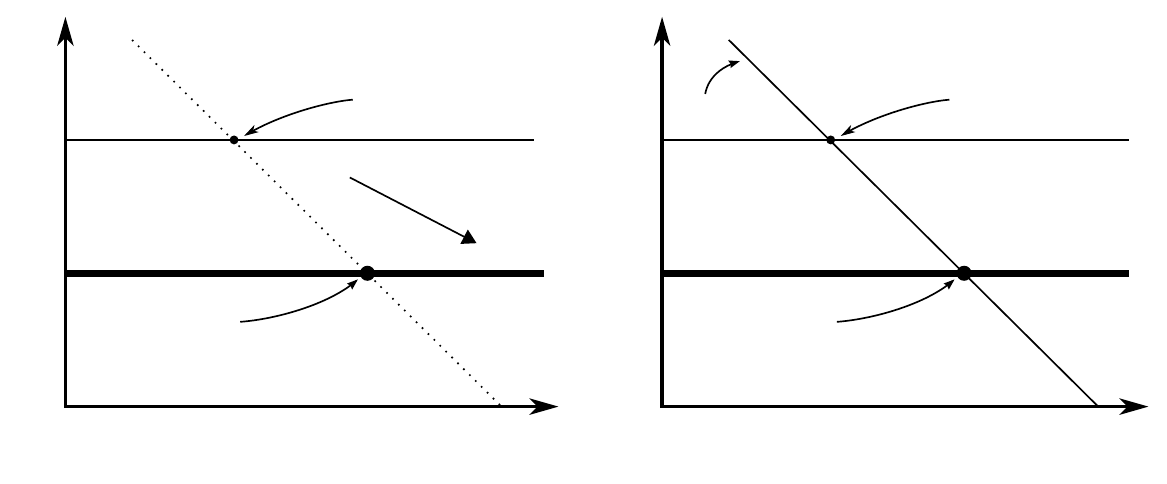 
     \caption{The $E_2$ and $E_\infty$ pages of the Serre Spectral Sequence with fiber a $(d-1)$-connected oriented closed $2d$-dimensional manifold $M$. The entries with total degree $n$ are highlighted. We abbreviate $F^iH^n := F^iH^n\left( E;\ZZ \right)$. }
  \label{fig:serre-ss}
\end{figure}

\begin{lemma}\label{lem:filtrations-jump}
  If $M$ has dimension $2d$, the filtration on cohomology is such that, for all $n$,
  \[ F^{n-2d}H^n(E; \ZZ) = H^n(E; \ZZ). \]
  If $M$ is also $(d-1)$-connected, then we also have
  \[ F^{n-d}H^n(E; \ZZ) = F^{n-2d + 1}H^n(E; \ZZ). \]
\end{lemma}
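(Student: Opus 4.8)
The plan is to deduce both filtration identities directly from the Convergence Theorem (Theorem~\ref{thm:convergence-of-serre}), using only the fact that certain rows of the $E_2$ page vanish for dimensional reasons. The essential mechanism is this: whenever $E_2^{p,q} = 0$, the term $E_\infty^{p,q}$ vanishes as well, being a subquotient of $E_2^{p,q}$, and then part~(a) of the convergence theorem gives $F^p H^{p+q}(E)/F^{p+1}H^{p+q}(E) = E_\infty^{p,q} = 0$, so that $F^p H^{p+q}(E) = F^{p+1} H^{p+q}(E)$. Both claims then follow by chaining such equalities across a block of vanishing rows. Throughout, recall that $F^p H^n(E) = H^n(E)$ for all $p \le 0$.

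For the first identity, I would observe that since $\dim M = 2d$ we have $H^q(M) = 0$ for every $q > 2d$, so the coefficient system $\tH^q(M)$ is zero and hence $E_2^{p,q} = H^p(B; \tH^q(M)) = 0$ for all $q > 2d$ and all $p$. Applying the mechanism above to the terms $(p, n-p)$ for $p = 0, 1, \ldots, n-2d-1$ --- for which $q = n-p \ge 2d+1 > 2d$ --- yields the chain $H^n(E) = F^0 H^n(E) = F^1 H^n(E) = \cdots = F^{n-2d} H^n(E)$, which is exactly the first claim.

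For the second identity I would add the hypothesis that $M$ is $(d-1)$-connected. Then $H^q(M) = 0$ for $0 < q < d$, and since $M$ is a closed oriented manifold, Poincar\'e duality $H^q(M) \cong H_{2d-q}(M)$ together with the same connectivity gives $H^q(M) = 0$ for $d < q < 2d$ as well (see Figure~\ref{fig:serre-ss}, where only the rows $q = 0, d, 2d$ survive). Consequently $E_2^{p,q} = 0$ for every $d < q < 2d$. Applying the mechanism to the terms $(p, n-p)$ for $p = n-2d+1, \ldots, n-d-1$ --- for which $q = n-p$ runs through $2d-1, \ldots, d+1$, all strictly between $d$ and $2d$ --- produces the chain $F^{n-2d+1}H^n(E) = F^{n-2d+2}H^n(E) = \cdots = F^{n-d}H^n(E)$, which is the second claim.

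The argument is entirely formal once the vanishing of the relevant $E_2$ terms is established, so there is no genuine obstacle; the only point requiring care is the passage from $(d-1)$-connectedness to the vanishing of $H^q(M)$ in the upper range $d < q < 2d$, which relies on Poincar\'e duality and hence on $M$ being closed and oriented, both of which are guaranteed by the oriented-Serre-fibration hypothesis.
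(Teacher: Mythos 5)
Your proposal is correct and follows essentially the same argument as the paper: the vanishing of $E_2^{p,q}$ for $q>2d$ (dimension) and for $d<q<2d$ ($(d-1)$-connectedness plus Poincar\'e duality) forces the corresponding $E_\infty$ terms to vanish, so the successive filtration quotients are trivial and the filtration steps collapse. Your write-up merely makes the chaining of equalities and the role of $F^pH^n(E)=H^n(E)$ for $p\le 0$ explicit, which the paper leaves implicit.
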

  (For the indices in this and the following arguments, refer to Figure~\ref{fig:serre-ss})
\begin{proof}
  Since the fiber $M$ is $2d$-dimensional, $E_2^{n - q,q} = 0$ for $q > 2d$, and therefore $0 = E_\infty^{n-q, q} = F^{n-q}H^n(E; \ZZ)/ F^{n - q + 1}H^n(E; \ZZ)$ as well.

  If $M$ is $(d-1)$-connected, then $H^q(M; \ZZ)= 0$ for $2d > q > d$ by Poincar\'e duality. Thus, $E_2^{n - q,q} = 0$ as well in this range.
\end{proof}

From the $E_2$ page onwards, all the differentials in the spectral sequence go in the down-and-right direction. In particular, there are no differentials \emph{into} the $2d$-th row of the spectral sequence (i.e., the $E_i^{n-2d, 2d}$ terms for $i \geq 2$). So, 
\[B^{n-2d, 2d} = \image(\text{differentials into $(n-2d,2d)$ terms}) = 0.\]
The convergence theorem implies that $E_\infty^{n-2d, 2d} \subset E_2^{n-2d, 2d} / B^{n-2d, 2d}$, so
 \begin{lemma}\label{lem:einfty-sub-e2-on-2d-row} 
  We have $E_\infty^{n-2d, 2d} \subset E_2^{n-2d, 2d}$.
\end{lemma}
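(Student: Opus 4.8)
The plan is to read the claim directly off part (b) of the Convergence Theorem (Theorem~\ref{thm:convergence-of-serre}), which identifies $E_\infty^{p,q} = Z^{p,q}/B^{p,q}$ with nested subgroups $B^{p,q} \subset Z^{p,q} \subset E_2^{p,q}$. Since $Z^{n-2d,2d} \subset E_2^{n-2d,2d}$ holds automatically, the entire statement reduces to showing that $B^{n-2d,2d} = 0$, i.e.\ that no nonzero differential maps into the position $(n-2d,2d)$.

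First I would recall that in the cohomological Serre spectral sequence the differential on the $r$-th page has bidegree $(r, 1-r)$, so that $d_r$ carries $E_r^{p,q}$ to $E_r^{p+r,\, q-r+1}$. Consequently, a differential landing in $(n-2d, 2d)$ must originate from the term $E_r^{\,n-2d-r,\; 2d+r-1}$ for some $r \geq 2$. I would then observe that the fiber index of this source satisfies $2d + r - 1 \geq 2d + 1 > 2d$ for every $r \geq 2$.

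Next I would invoke the standing hypothesis that the fiber $M$ is a closed $2d$-dimensional manifold, so that $H^q(M) = 0$ for $q > 2d$; hence the twisted coefficient system $\tH^{\,2d+r-1}(M)$ vanishes, forcing $E_2^{\,n-2d-r,\; 2d+r-1} = 0$. As a subquotient of this vanishing $E_2$ term, $E_r^{\,n-2d-r,\; 2d+r-1}$ is zero as well, so every differential into $(n-2d,2d)$ has trivial source. This gives $B^{n-2d,2d} = \image(\text{differentials into }(n-2d,2d)\text{ terms}) = 0$, exactly as asserted in the discussion preceding the lemma.

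Substituting $B^{n-2d,2d} = 0$ into $E_\infty^{n-2d,2d} = Z^{n-2d,2d}/B^{n-2d,2d}$ then yields $E_\infty^{n-2d,2d} = Z^{n-2d,2d} \subset E_2^{n-2d,2d}$, which is the desired inclusion. There is no genuine obstacle here; the only point requiring care is the bookkeeping of the bidegrees of the differentials, which is what confirms that the top row $q = 2d$ can serve only as the source of a differential and never as its target.
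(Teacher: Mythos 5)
Your proof is correct and follows essentially the same route as the paper: the paper also observes that differentials go down-and-right (so any differential into the top row $q=2d$ would originate in a vanishing row $q>2d$), concludes $B^{n-2d,2d}=0$, and then reads the inclusion off part (b) of the Convergence Theorem. Your version merely makes the bidegree bookkeeping of $d_r$ explicit, which the paper leaves implicit.
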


By definition, $E_\infty^{n-2d, 2d} = F^{n-2d}H^n(E; \ZZ) / F^{n-2d+1}H^n(E; \ZZ)$. We can now state the definition of the pushforward map that we use throughout this paper:

\begin{definition}[{\cite[\S 8]{borel-hirzebruch59}}]\label{def:ss-pushforward}
    If the Serre fibration $\pi\co E \to B$ with fiber $M^{2d}$ is \emph{oriented}, we define the \defterm{pushforward map on cohomology} $\pi_!\co H^*(E; \ZZ)\to H^{*-2d}(B; \ZZ)$ to be the composition of maps
\begin{equation}\label{eq:ss-pushforward} 
 \xymatrix@C-3mm{
H^n(E; \ZZ) \ar@{=}[r] \ar@/_2em/[rrrr]^{\pi_!}& F^{n-2d}H^n(E; \ZZ) \ar@{->>}[r] & E_\infty^{n-2d,2d} \ar@{^(->}[r] & 
E_2^{n-2d,2d} \ar[r]^-{\sim}_{\hspace{-2mm}\coeff{or}}&H^{n-2d}(B; \mathbb{Z}). 
} 
\end{equation}
  \end{definition}

Various properties of the pushforward map (which are not used in this section nor in Section~\ref{sec:remainder-of-proof}) are discussed in sections~\ref{sec:pushforward-intro} and~\ref{sec:further-properties-pushforwards}.

\sectwo{Secondary pushforwards and the decomposition of pushforwards}
Let us now assume that our Serre fibration is oriented and that the fiber $M$ is a $(d-1)$-connected $2d$-dimensional oriented closed manifold. Let us consider the kernel of the map $\pi_!$ we just defined.  

\begin{lemma}
Let $\K{n} := (\ker \pi_!) \cap H^n(E; \ZZ) \subset H^*(E; \ZZ)$. If $M$ is $2d$-dimensional and $(d-1)$-connected, then
\[\K{n} = F^{n-d}H^n(E; \ZZ). \]
\end{lemma}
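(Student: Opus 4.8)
The plan is to unwind the definition of $\pi_!$ from Definition~\ref{def:ss-pushforward} and read off its kernel directly. Recall that $\pi_!$ is the composition displayed in~\eqref{eq:ss-pushforward},
\[
H^n(E) \xrightarrow{\ =\ } F^{n-2d}H^n(E) \twoheadrightarrow E_\infty^{n-2d,2d} \hookrightarrow E_2^{n-2d,2d} \xrightarrow[\coeff{or}]{\ \sim\ } H^{n-2d}(B;\ZZ).
\]
First I would observe that every map in this composition after the surjection is injective: the inclusion $E_\infty^{n-2d,2d} \hookrightarrow E_2^{n-2d,2d}$ is injective by Lemma~\ref{lem:einfty-sub-e2-on-2d-row}, and the final map is an isomorphism induced by the orientation. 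The very first map is the identity under the identification $F^{n-2d}H^n(E) = H^n(E)$ supplied by the first part of Lemma~\ref{lem:filtrations-jump}. Consequently $\ker \pi_!$ coincides with the kernel of the single surjection $F^{n-2d}H^n(E) \twoheadrightarrow E_\infty^{n-2d,2d}$.

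Next I would identify that kernel. By part~(a) of the Convergence Theorem~\ref{thm:convergence-of-serre}, this surjection is precisely the quotient map onto $E_\infty^{n-2d,2d} = F^{n-2d}H^n(E)/F^{n-2d+1}H^n(E)$, whose kernel is the subgroup $F^{n-2d+1}H^n(E)$. Combining this with the previous paragraph yields $\K{n} = F^{n-2d+1}H^n(E)$. Finally, invoking the second conclusion of Lemma~\ref{lem:filtrations-jump}, which is exactly where the hypothesis that $M$ is $(d-1)$-connected enters, I would rewrite $F^{n-2d+1}H^n(E) = F^{n-d}H^n(E)$, giving the claimed equality.

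There is no serious obstacle here: the whole argument is a matter of chasing the definition of $\pi_!$ through~\eqref{eq:ss-pushforward} and applying the two filtration lemmas already in hand. The only point demanding genuine care is verifying that the tail of the composition is injective, so that the kernel of the entire composition is detected at the first nontrivial (quotient) step rather than further along; this is precisely what Lemma~\ref{lem:einfty-sub-e2-on-2d-row} together with the orientation isomorphism $\coeff{or}$ guarantee.
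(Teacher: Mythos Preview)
Your proposal is correct and follows essentially the same route as the paper: unwind the composition~\eqref{eq:ss-pushforward}, use the injectivity of the maps after the surjection (via Lemma~\ref{lem:einfty-sub-e2-on-2d-row} and the orientation isomorphism) to identify $\K{n}$ with $F^{n-2d+1}H^n(E)$, and then invoke Lemma~\ref{lem:filtrations-jump} to equate this with $F^{n-d}H^n(E)$. The paper's argument is terser but logically identical; your version is simply more explicit about why the kernel is detected at the quotient step.
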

\begin{proof}
By examining the map~\eqref{eq:ss-pushforward}, we see that the quotient map 
\[ H^n(E; \ZZ) = F^{n-2d}H^n(E; \ZZ) \twoheadrightarrow E_\infty^{n-2d, 2d} = F^{n-2d}H^n(E; \ZZ) / F^{n-2d+1}H^n(E; \ZZ) \]
must take $\K{n}$ to zero and therefore $\K{n} = F^{n-2d+1}H^n(E; \ZZ)$.  Lemma~\ref{lem:filtrations-jump} states that since $M$ is $(d-1)$-connected, $F^{n-2d+1}H^n(E; \ZZ) = F^{n-d}H^n(E; \ZZ)$.
\end{proof}

We will now attempt to repeat the construction of the map~\eqref{eq:ss-pushforward}. The lemma gives us a quotient map $\K{n} = F^{n-d}H^n(E; \ZZ) \twoheadrightarrow E^{n-d,d}_\infty$ (see also Figure~\ref{fig:serre-ss} for indices). It is no longer necessarily true that $E^{n-d,d}_\infty$ is a subset of $E^{n-d,d}_2$, but the convergence theorem states that it is in general a subset of a quotient:
\[ E_\infty^{p,q} = \frac{Z^{p,q}}{B^{p,q}} \subset \frac{E_2^{p,q}}{B^{p,q}}. \]

So, we have the following sequence of maps:
\begin{equation} \label{eq:maps-with-subquotient}
 \xymatrix@C-2mm{
\K{n} \ar@{=}[r] \ar@{-->}[rrrd]^{\xi}&  F^{n-d} H^{n}(E; \ZZ) \ar@{->>}[r] & 
E_\infty^{n-d,d} \ar@{^(->}[r] & 
 \frac{E_2^{n-d,d}}{B^{p,d}} \\
 & & & E_2^{n-d,d} \ar@{->>}[u] \ar@{=}[r] & 
H^{n-d}(B; \tH^d).
} 
\end{equation}
We use the fact that the wrong-way map in the above diagram is surjective to make the following definition:
\begin{definition}
For each $a \in \K{n}$, we define its \defterm{secondary pushforward} $\xi(a) \in E_2^{n-d,d} = H^{n-d}(B; \tH^d)$ to be some element that maps to the same element of $ \frac{E_2^{n-d,d}}{B^{p,d}}$ as $a$ under the maps in~\eqref{eq:maps-with-subquotient}. From now on, we assume that we have fixed a choice of such a $\xi(a)$ for every $a$.

Since there is no reason for $\xi\co \K{n} \dashrightarrow H^{n-d}(B; \tH^d)$ to be a group homomorphism, we will call it a \emph{correspondence} rather than a \emph{map} and denote it with a dashed arrow. 
\end{definition}

\begin{proposition}\label{pro:decompose-pushforward-e2}
Let $a \in \K{p+d}$ and $b \in \K{p'+d}$. The cohomology class $\pi_!(a \cup b) \in H^{p + p'}(B; \mathbb{Z})$ is the image of $\xi(a) \otimes \xi(b)$ under the following map.
\begin{equation}
  \label{eq:decompose-pushforward-e2-version}\xymatrix@R=2.5ex@C+2ex{
   E_2^{p,d} \otimes E_2^{p', d} \ar[r]^-{\bullet} & E_2^{p+p', 2d} \ar[r]^-{\sim}_-{\coeff{or}}  & H^{p + p'}(B; \mathbb{Z})
    \\
         \xi(a) \otimes \xi(b) \ar@{|->}[rr] \ar@{}[u]|{\rotatebox{90}{$\in$}}&  & \pi_!(a \cup b) \ar@{}[u]|{\rotatebox{90}{$\in$}}
}
\end{equation}
\end{proposition}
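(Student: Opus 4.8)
The plan is to push the class $a \cup b$ through the Serre spectral sequence and to compare the two recipes it determines: the one computing $\pi_!(a \cup b)$ off the $2d$-th row (Definition~\ref{def:ss-pushforward}), and the one computing the secondary pushforwards $\xi(a),\xi(b)$ off the $d$-th row, linked by the multiplicative structure of the spectral sequence. The decisive simplification, used repeatedly, is that the $2d$-th row receives no differentials, so $B^{p+p',2d}=0$ and $E_\infty^{p+p',2d}$ is literally a subgroup of $E_2^{p+p',2d}$.

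First I would record the filtration degrees. The preceding lemma identifies $\K{p+d}=F^{p}H^{p+d}(E)$ and $\K{p'+d}=F^{p'}H^{p'+d}(E)$, and the cup product respects the filtration~\eqref{eq:filtration}, so $a\cup b\in F^{p+p'}H^{n}(E)$ with $n=p+p'+2d$; note $p+p'=n-2d$, exactly the filtration level at which Definition~\ref{def:ss-pushforward} reads off the top-row term. Writing $\bar a\in E_\infty^{p,d}$, $\bar b\in E_\infty^{p',d}$, and $\overline{a\cup b}\in E_\infty^{p+p',2d}$ for the images in the associated graded, part (a) of Theorem~\ref{thm:convergence-of-serre} says that the $E_\infty$-ring is the associated graded of the filtered ring $H^*(E)$; hence $\overline{a\cup b}=\bar a\cdot\bar b$, where $\cdot$ denotes the induced $E_\infty$-product.

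Next I would translate this identity from $E_\infty$ back to $E_2$. By construction $\xi(a)\in E_2^{p,d}$ is a lift of $\bar a$, so $\xi(a)\in Z^{p,d}$ with $\xi(a)\equiv\bar a\pmod{B^{p,d}}$, and similarly for $\xi(b)$. Since the differentials are derivations (the multiplicativity asserted in part (b) of Theorem~\ref{thm:convergence-of-serre}), the $E_2$-product $\bullet$ of~\eqref{eq:e2-product-general} carries permanent cycles to permanent cycles and descends to the $E_\infty$-product; thus $\xi(a)\bullet\xi(b)\in Z^{p+p',2d}$ and it represents $\bar a\cdot\bar b$ modulo $B^{p+p',2d}$. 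Because $B^{p+p',2d}=0$, the inclusion $E_\infty^{p+p',2d}=Z^{p+p',2d}\hookrightarrow E_2^{p+p',2d}$ identifies $\bar a\cdot\bar b$ with the honest element $\xi(a)\bullet\xi(b)$ of $E_2^{p+p',2d}$. The same vanishing shows $\xi(a)\bullet\xi(b)$ is independent of the chosen lifts, since changing a lift alters it by $B^{p,d}\bullet Z^{p',d}\subset B^{p+p',2d}=0$, which is consistent with the well-definedness of the left-hand side.

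Finally, Definition~\ref{def:ss-pushforward} gives $\pi_!(a\cup b)=\coeff{or}\big(\overline{a\cup b}\big)$, where $\overline{a\cup b}$ is regarded inside $E_2^{p+p',2d}$ via the inclusion above. Substituting $\overline{a\cup b}=\bar a\cdot\bar b=\xi(a)\bullet\xi(b)$ yields $\pi_!(a\cup b)=\coeff{or}(\xi(a)\bullet\xi(b))$, which is precisely the assertion of~\eqref{eq:decompose-pushforward-e2-version}. I expect the main obstacle to be the careful bookkeeping of the three products in play --- the cup product on $H^*(E)$, the $E_2$-product $\bullet$, and its descent to the $E_\infty$-product --- and in particular verifying their mutual compatibility across the pages of the spectral sequence; once the identification $E_\infty^{p+p',2d}\subset E_2^{p+p',2d}$ coming from $B^{p+p',2d}=0$ is in hand, the remainder is degree counting.
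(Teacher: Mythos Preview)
Your proposal is correct and follows essentially the same route as the paper: the paper packages the argument into one large commutative diagram tracking $a\otimes b$ down through $F^p\otimes F^{p'}$, $E_\infty\otimes E_\infty$, and $E_2/B\otimes E_2/B$, with the same decisive observations (filtration-multiplicativity of $\cup$, compatibility of the $E_2$- and $E_\infty$-products, and $B^{p+p',2d}=0$) that you spell out in prose. Your additional remark that $\xi(a)\bullet\xi(b)$ is independent of the chosen lifts is a nice bonus not made explicit in the paper.
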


\begin{proof}
Since the Serre spectral sequence is multiplicative, every term in the diagram~\eqref{eq:maps-with-subquotient} is a subset of some ring. The following diagram combines the multiplication maps on every term.
\[\xymatrix@C+=1in@R=.2in{
 {\hspace{-.5in}\scriptscriptstyle (a \otimes b) \in \,\,} \K{p+d} \otimes \K{p'+d} \ar@{=}[d] \ar[r]^-{\cup}& 
H^{p+p'+2d}(E; \ZZ) \ar@{=}[d]  \ar `r/20pt[d] `[dddd]^{\pi_!}_{\text{(a)}} [dddd]  \\
 F^{p}H^{p+d}(E; \ZZ) \otimes F^{p'}H^{p'+d}(E; \ZZ)  \ar@{->>}[d] &
  F^{p + p'}H^{p + p' + 2d}(E; \ZZ) \ar@{->>}[d] 
\\
 E_\infty^{p,d} \otimes E_\infty^{p',d} \ar[r]^{E_\infty\text{ mult.}}
\ar@{^(->}[d] &
E_\infty^{p + p', 2d} \ar@{^(->}[d]
\\
 \frac{ E_2^{p,d} }{ B^{p,d} } \otimes \frac{ E_2^{p',d} }{ B^{p',d} } 
\ar[r]^{E_2\text{ mult.}}_-{\text{(b)}}&
 \frac{ E_2^{p+p',2d} }{ B^{p+p', 2d} } = E_2^{p+p', 2d} \ar[d]_{\coeff{or}}^{\rotatebox{90}{$\sim$}}
\\
 E_2^{p,d} \otimes E_2^{p',d} \ar@{->>}[u]
\ar@{}[r]|{\hspace{1.2in}\pi_!(a \cup b) \ \in} & 
H^{p+p'}(B; \mathbb{Z}) 
\ar@{}[l]|{\ni\  \xi(a) \otimes \xi(b)  \hspace{1.1in}} 
}\]

We observe the following:
\begin{itemize}
\item The convergence theorem implies that the diagram commutes and the map (b) is well-defined.
\item The composition of maps (a) coincides with the map~\eqref{eq:ss-pushforward} from the definition of $\pi_!$.
\item In the image of the map (b), the group $B^{p+p',2d}$ is zero as we discussed in the proof of Lemma~\ref{lem:einfty-sub-e2-on-2d-row}.
\item The composition of maps from $E_2^{p,d} \otimes E_2^{p',d}$ to $H^{p+p'}(B; \ZZ)$ in the diagram is precisely the map~\eqref{eq:decompose-pushforward-e2-version}.
\end{itemize}

By construction of the secondary pushforward, the image of $\xi(a) \otimes \xi(b)$ in $H^{p+p'}(B;\ZZ)$ is the same as the image of $a \otimes b$, which is precisely $\pi_!(a \cup b)$. 
\end{proof}

\secone{Remainder of the proof of Theorem~\ref{thm:fund-relations}}\label{sec:remainder-of-proof}\label{sec:end-of-proof}
The first goal of this section is to prove the following property of the cup product~\eqref{eq:cup-prod-twisted-coeff-expand}:

\begin{proposition}\label{pro:twisted-powers-zero}
Let $\tH$ be a twisted coefficient system with fiber $\ZZ^k$ with $k \leq 2g$. Let $\iota \in H^*(B; \tH)$ have odd degree. Then,
\[ (2g+1)! \cdot \iota^{2g+1} = 0 \in H^{(2g+1)\deg(\iota)}(B;\tH^{\otimes 2g+1}). \]
\end{proposition}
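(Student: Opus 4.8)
The plan is to exploit the fact that $\iota^{2g+1}$ lives in cohomology with coefficients in the $(2g+1)$-fold tensor power $\tH^{\otimes(2g+1)}$, on which the symmetric group $S_{2g+1}$ acts by permuting the tensor factors. Each permutation $\sigma$ induces, via the coefficient functoriality $\coeff{(-)}$, an endomorphism $\coeff{\sigma}$ of $H^*(B;\tH^{\otimes(2g+1)})$. The first step is to show that $\iota^{2g+1}$ is \emph{alternating} for this action, i.e.\ $\coeff{\sigma}(\iota^{2g+1}) = \operatorname{sgn}(\sigma)\,\iota^{2g+1}$ for every $\sigma$. It suffices to check this on an adjacent transposition $\tau=(i,i+1)$, where it reduces to graded-commutativity of the twisted cup product (to be recorded in Section~\ref{sec:facts-cup-product}): since $\alpha\cup\beta = (-1)^{\deg\alpha\,\deg\beta}\coeff{\mathrm{swap}}(\beta\cup\alpha)$, applying this to the two middle copies of $\iota$ in $\iota^{2g+1}$ and using that $\deg\iota$ is odd produces exactly a factor $(-1)^{(\deg\iota)^2} = -1 = \operatorname{sgn}(\tau)$, while the remaining copies of $\iota$ are carried along as spectators by naturality of the cup product in the coefficients.

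Granting this, I would consider the antisymmetrizer $A = \sum_{\sigma\in S_{2g+1}} \operatorname{sgn}(\sigma)\,\sigma$, viewed as an endomorphism of the coefficient system $\tH^{\otimes(2g+1)}$. Because $\coeff{(-)}$ is additive in the coefficient map, $\coeff{A} = \sum_\sigma \operatorname{sgn}(\sigma)\,\coeff{\sigma}$, and the alternating property from the first step gives $\coeff{A}(\iota^{2g+1}) = \sum_\sigma \operatorname{sgn}(\sigma)^2\,\iota^{2g+1} = (2g+1)!\,\iota^{2g+1}$. Thus it remains only to prove that $\coeff{A}(\iota^{2g+1}) = 0$, and for this it is enough to show that $A$ is the \emph{zero} endomorphism of $\tH^{\otimes(2g+1)}$.

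This last point is pure linear algebra over $\ZZ$ and is where the rank hypothesis enters (cf.\ Section~\ref{sec:alternating-tensor-section}). For any transposition $\tau$ one has $\tau\circ A = \operatorname{sgn}(\tau)\,A$, so the image of $A$ on $V^{\otimes k}$ consists of totally antisymmetric tensors, i.e.\ it is the image of the natural map $\Lambda^k V \to V^{\otimes k}$. When $V$ is free of rank $r$ and $k>r$, expanding an antisymmetric tensor in a basis and invoking the pigeonhole principle forces a repeated basis index in every term, hence all coefficients vanish and $\Lambda^k V = 0$. Applying this with $V = \tH$, $r = \rank\tH \le 2g$, and $k = 2g+1 > 2g \ge r$ yields $A = 0$ on $\tH^{\otimes(2g+1)}$, so $\coeff{A}=0$, and the proposition follows.

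The main obstacle I expect is bookkeeping rather than conceptual: one must pin down the precise graded-commutativity identity for the twisted cup product~\eqref{eq:cup-prod-twisted-coeff-expand}, including the coefficient-swap map and its interaction with the collapse map~\eqref{eq:cup-prod-twisted-coeff-collapse}, and verify that cupping with the spectator copies of $\iota$ is natural in the coefficients so that a swap of two interior factors genuinely descends to the single sign $\operatorname{sgn}(\tau)$. The remaining ingredient---vanishing of the integral antisymmetrizer once $k$ exceeds the rank---is elementary but must be argued over $\ZZ$ rather than over a field, since the desired conclusion is an integral (hence torsion) statement.
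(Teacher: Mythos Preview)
Your proposal is correct and follows essentially the same argument as the paper: both establish that $\iota^{2g+1}$ is alternating under the $S_{2g+1}$-action via graded commutativity of the twisted cup product, then apply the antisymmetrizer on coefficients (the paper calls it $p$, you call it $A$) to identify $(2g+1)!\,\iota^{2g+1}$ with a class coming from $\Alt{\tH^{\otimes(2g+1)}}$, which vanishes since $\rank\tH\le 2g$. The only cosmetic difference is that the paper phrases the middle step as ``$(2g+1)!\,\iota^{2g+1}$ lies in the image of $\coeff{i}:H^*(B;\Alt{\tH^{\otimes(2g+1)}})\to H^*(B;\tH^{\otimes(2g+1)})$'' rather than ``$\coeff{A}=0$'', but the underlying computation is identical.
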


This proposition is a generalization of the fact that if $\beta \in H^*(B;\ZZ)$ has odd degree, then $2\beta^2 = 0$. Similarly to that fact, the proof relies on the generalized commutativity of cup product with twisted coefficients.

Once we prove Proposition~\ref{pro:twisted-powers-zero}, we will relate it with Proposition~\ref{pro:decompose-pushforward-e2} to complete the proof of Theorem~\ref{thm:fund-relations}.

\sectwo{Cup product and twisted coefficients}\label{sec:facts-cup-product}
In this section, we state the formal properties of cup product for cohomology with twisted coefficients that we use. They generalize familiar properties of the usual cup product. See~\cite{steenrod43-local-coefficients} for a reference.

Cohomology with twisted coefficients assigns  a graded abelian group $H^*(X; \tA)$ to the pair $(X, \tA)$ of a space and a twisted coefficient system. Given two coefficient systems $\tA$ and $\tB$ over the same space $X$, the \defterm{cup product with twisted coefficients} we mentioned in~\eqref{eq:cup-prod-twisted-coeff-expand} is a map $\cup\co H^*(X, \tA) \otimes H^*(X, \tB) \to H^*(X, \tA \otimes \tB)$. Also, given a map of coefficient systems $f\co \tA \to \tB$, there is a corresponding map on cohomology $\coeff{f}\co H^*(X; \tA) \to H^*(X; \tB)$.

The following properties of cup products on cohomology with twisted coefficients will be important for us:

\begin{itemize}
  
   \item The cup product is \emph{associative} in the sense that the two possible cup products of three terms $H^*(X, \tA) \otimes H^*(X, \tB) \otimes H^*(X, \tC) \to H^*(X, \tA \otimes \tB \otimes \tC)$ are the same.

  \item The cup product \emph{commutes with change of coefficients} in the following sense:

     Let $f\co \tA \to \tB$ be and $g\co \tC \to \tD$ be maps of coefficient systems (all over the same space $X$). There is a corresponding map $f \otimes g\co \tA \otimes \tC \to \tB \otimes \tD$. The following diagram commutes. 
      \[\xymatrix{
        H^*(X;\tA) \otimes H^*(X;\tC) \ar[rr]^{\coeff{f} \otimes \coeff{g}} \ar[d]_\cup & & H^*(X;\tB) \otimes H^*(X;\tD) \ar[d]_\cup \\
        H^*(X;\tA \otimes \tC) \ar[rr]^{ \coeff{\left(f \otimes g\right)}} & & H^*(X;\tB \otimes \tD)
      }\]

  \item The cup product is \emph{graded-commutative} in the following sense: 
      
    Let $\tau\co \tA \otimes \tB \to \tB \otimes \tA$ be the map that swaps the coordinates. For $a \in H^p(X; \tA)$ and $b \in H^q(X; \tB)$, we have
    \begin{equation}
      \alpha \cup \beta = (-1)^{pq} \coeff{\tau} (\beta \cup \alpha). \label{eq:twisted-cup-commut}
     \end{equation}

\end{itemize}

 These facts can be proven in the same way as the corresponding facts for the regular cup product; we refer to~\cite[\S 11]{steenrod43-local-coefficients} for details. As in the regular case, graded commutativity of the cup product doesn't hold in general on the level of chains. 

\sectwo{Powers of odd classes and proof of Proposition~\ref{pro:twisted-powers-zero}}\label{sec:alternating-tensor-section}
Before proving Proposition~\ref{pro:twisted-powers-zero}, we need to state two lemmas. 

For any representation $V$ of the symmetric group $S_n$, we denote by $\Alt{V}$ the \defterm{alternating sub-representation}
\[\Alt{V} = \left\{v \in V \mid \forall \sigma \in S_n,\ \sigma \cdot v = sgn(\sigma) v \right\} \subset V. \]

Let $\tH$ be a twisted coefficient system. For any $t$, $\tH^{\otimes t}$ is an $S_t$-representation with the action defined by 
$ \sigma \cdot \left( h_1 \otimes \cdots \otimes h_t \right) = (h_{\sigma(1)} \otimes \cdots \otimes h_{\sigma(t)})$. This action on coefficients also makes the cohomology $H^*(B; \tH^{\otimes t})$ into an $S_t$-representation.

\begin{lemma}
  If $\iota \in H^{\deg(\iota)}(B; \tH)$ with $\deg(\iota)$ odd, then $\iota^t \in \Alt{H^*(B; \tH^{\otimes t})}$.
\end{lemma}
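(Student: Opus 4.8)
The plan is to show that $\iota^t$ lies in the alternating subrepresentation by checking that transposing any two adjacent tensor factors multiplies $\iota^t$ by $-1$; since adjacent transpositions generate $S_t$ and the sign of an adjacent transposition is $-1$, this suffices. The key tool is the graded-commutativity property \eqref{eq:twisted-cup-commut} of the twisted cup product, combined with the hypothesis that $\deg(\iota)$ is odd.

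First I would set up notation carefully. The class $\iota^t \in H^*(B; \tH^{\otimes t})$ is the $t$-fold cup product $\iota \cup \iota \cup \cdots \cup \iota$, where each factor lives in $H^{\deg(\iota)}(B;\tH)$ and the product is formed using the associative cup product \eqref{eq:cup-prod-twisted-coeff-expand}, landing in $H^{t\deg(\iota)}(B; \tH^{\otimes t})$. The $S_t$-action on $H^*(B;\tH^{\otimes t})$ is induced, via the change-of-coefficients map $(-)_{\mathrm{coeff}}$, by the permutation action on $\tH^{\otimes t}$. To prove $\iota^t \in \Alt{H^*(B;\tH^{\otimes t})}$ it is enough to verify $\sigma_i \cdot \iota^t = -\iota^t$ for each adjacent transposition $\sigma_i = (i\ \,i{+}1)$.

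Next I would carry out the computation for a single adjacent transposition. Using associativity, write $\iota^t = A \cup \iota \cup \iota \cup C$, where $A = \iota^{\,i-1}$ is the product of the first $i-1$ factors, $C = \iota^{\,t-i-1}$ is the product of the last factors, and the two middle factors are the ones being swapped. The action of $\sigma_i$ only affects the coefficient tensor factors in positions $i$ and $i+1$, so $\sigma_i \cdot \iota^t = A \cup \left(\tau_{\mathrm{coeff}}(\iota \cup \iota)\right) \cup C$ where $\tau$ swaps the two corresponding $\tH$-factors. By graded-commutativity \eqref{eq:twisted-cup-commut}, since both middle factors have odd degree $\deg(\iota)$, we have $\iota \cup \iota = (-1)^{\deg(\iota)^2}\tau_{\mathrm{coeff}}(\iota \cup \iota) = -\tau_{\mathrm{coeff}}(\iota\cup\iota)$, so $\tau_{\mathrm{coeff}}(\iota\cup\iota) = -(\iota\cup\iota)$. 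The change-of-coefficients commutation property (the second bullet in Section~\ref{sec:facts-cup-product}) guarantees that $\tau_{\mathrm{coeff}}$ applied to the whole product agrees with applying it only to the middle two factors, so $\sigma_i \cdot \iota^t = -\,A \cup \iota \cup \iota \cup C = -\iota^t$.

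The main obstacle will be bookkeeping rather than mathematical depth: I need to be careful that the $S_t$-action on $H^*(B;\tH^{\otimes t})$ really is computed factor-by-factor as claimed, i.e. that permuting the abstract tensor factors of the coefficient system commutes appropriately with forming the iterated cup product, and that an adjacent transposition of coefficient factors can be localized to the two corresponding cohomology factors without disturbing $A$ and $C$. This is exactly what the associativity and change-of-coefficients properties listed in Section~\ref{sec:facts-cup-product} are designed to supply, so once those are invoked correctly the sign computation is immediate. Since the $(-1)^{pq}$ in \eqref{eq:twisted-cup-commut} becomes $-1$ precisely because $\deg(\iota)$ is odd, oddness is used in an essential way, and this completes the proof that $\iota^t \in \Alt{H^*(B;\tH^{\otimes t})}$.
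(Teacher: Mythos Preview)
Your proof is correct and follows essentially the same approach as the paper: reduce to the $t=2$ case via the graded-commutativity formula~\eqref{eq:twisted-cup-commut}, then extend to arbitrary permutations by writing them as products of transpositions. You are somewhat more explicit than the paper in restricting to adjacent transpositions and in invoking the associativity and change-of-coefficients properties to localize the swap to the two relevant factors, but the underlying idea is identical.
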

\begin{proof}
  First, consider the $t=2$ case. Since $\iota$ has odd degree, the formula for commutativity of cup product states that, if $\tau \in S_2$ is the non-trivial transposition,
  \[ \coeff{\tau}(\iota \cup \iota) = -\iota \cup \iota = sgn(\tau) \cdot (\iota \cup \iota) \in H^{2\cdot \deg(\iota)}(B; \tH^{\otimes 2}). \]
 
   The general case follows from the facts that any permutation $\sigma \in S_t$ can be decomposed into a product of transpositions, and that the number of these transpositions $\operatorname{mod} 2$ is determined by $sgn(\sigma)$.
\end{proof}

The inclusion $i\co \Alt{\tH^{\otimes t}} \hookrightarrow \tH^{\otimes t}$ is a map of coefficient systems, and therefore induces a map on cohomology. If our coefficient system was a $\QQ$-vector space, we would want to prove that all of $\Alt{H^*(B; \tH^{\otimes t}_\QQ)}$ is in the image\footnote{With a little more work, one can show that $\coeff{i}$ induces an isomorphism $H^*(B; \Alt{\tH^{\otimes t}_\QQ}) \toiso \Alt{H^*(B; \tH^{\otimes t}_\QQ)}$.} of the map $\coeff{i}\co H^*(B; \Alt{\tH^{\otimes t}_\QQ}) \to H^*(B; \tH^{\otimes t}_\QQ)$. We prove an integral version of the same statement.
 
\begin{lemma}\label{lem:powers-and-alt}
    Suppose $\alpha \in \Alt{H^{\deg \alpha}(B; \tH^{\otimes t})}$. Then, $t!\alpha$ is contained in the image of the map $\coeff{i}\co H^*(B; \Alt{ \tH^{\otimes t}}) \to H^*(B;  \tH^{\otimes t})$. By abuse of notation, we will denote this fact by $t! \alpha \in H^*(B; \Alt{\tH^{\otimes t}})$.
\end{lemma}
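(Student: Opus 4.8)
The plan is to reduce the statement to a single representation-theoretic identity: antisymmetrization is $t!$ times the identity on the alternating part, implemented already at the level of coefficient systems. Concretely, I would introduce the endomorphism of coefficient systems
$$ A = \sum_{\sigma \in S_t} \mathrm{sgn}(\sigma)\,\sigma : \tH^{\otimes t} \to \tH^{\otimes t}, $$
where each $\sigma$ denotes the permutation-of-factors map used to define the $S_t$-representation structure on $\tH^{\otimes t}$. Since $A$ is a $\ZZ$-linear combination of maps of twisted coefficient systems, it is itself a map of twisted coefficient systems, and so induces a map $\coeff{A}$ on cohomology.

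First I would record the two elementary algebraic properties of $A$. For any $v \in \tH^{\otimes t}$ and any $\tau \in S_t$, reindexing the sum by $\sigma' = \tau\sigma$ gives $\tau \cdot A(v) = \mathrm{sgn}(\tau)\,A(v)$, so the image of $A$ is contained in $\Alt{\tH^{\otimes t}}$. Hence $A$ corestricts to a map of coefficient systems $A': \tH^{\otimes t} \to \Alt{\tH^{\otimes t}}$ with $i \circ A' = A$. Second, for $w \in \Alt{\tH^{\otimes t}}$ one has $\sigma \cdot w = \mathrm{sgn}(\sigma)\,w$, whence $A(w) = \sum_\sigma \mathrm{sgn}(\sigma)^2\,w = t!\,w$; that is, $A$ restricts to multiplication by $t!$ on the alternating subsystem.

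Next I would pass to cohomology, using that $\tA \mapsto H^*(B;\tA)$ is an additive functor of the coefficient system, so that $\coeff{(-)}$ is additive on Hom-groups and $\coeff{\sigma}$ is exactly the $S_t$-action used to define $\Alt{H^{\deg\alpha}(B;\tH^{\otimes t})}$. Consequently $\coeff{A} = \sum_{\sigma} \mathrm{sgn}(\sigma)\,\coeff{\sigma}$ and, from the factorization $A = i \circ A'$, also $\coeff{A} = \coeff{i} \circ \coeff{A'}$. For $\alpha$ in the alternating subgroup we have $\coeff{\sigma}(\alpha) = \mathrm{sgn}(\sigma)\,\alpha$, so
$$ \coeff{A}(\alpha) = \sum_{\sigma \in S_t} \mathrm{sgn}(\sigma)^2\,\alpha = t!\,\alpha. $$
Comparing the two expressions for $\coeff{A}$ yields $t!\,\alpha = \coeff{i}\bigl(\coeff{A'}(\alpha)\bigr)$, which exhibits $t!\,\alpha$ in the image of $\coeff{i}$, as required.

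There is no geometric content here; the work is entirely formal, and the only slightly delicate point is the additivity and functoriality of $\coeff{(-)}$ that legitimizes both writing $\coeff{A} = \sum_\sigma \mathrm{sgn}(\sigma)\,\coeff{\sigma}$ and factoring it as $\coeff{i} \circ \coeff{A'}$. This follows from the standard construction of cohomology with twisted coefficients (cf.\ the properties recalled in Section~\ref{sec:facts-cup-product}). I would also explicitly confirm that the $S_t$-action in the definition of $\Alt{H^{\deg\alpha}(B;\tH^{\otimes t})}$ is literally $\sigma \mapsto \coeff{\sigma}$, which is how that action was defined just above the lemma statement; with that matching in place, everything reduces to the identity $A|_{\Alt{\tH^{\otimes t}}} = t!\cdot\mathrm{id}$ noted above.
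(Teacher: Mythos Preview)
Your proof is correct and is essentially the same as the paper's: both introduce the antisymmetrization $A = \sum_{\sigma} \mathrm{sgn}(\sigma)\,\sigma$ on the coefficient system (the paper calls it $p$), observe that it lands in $\Alt{\tH^{\otimes t}}$, and compute that the induced map on cohomology sends an alternating class $\alpha$ to $t!\,\alpha$. You are a bit more explicit than the paper about the factorization $A = i \circ A'$ and the additivity of $\coeff{(-)}$ needed to identify $\coeff{A}$ with $\sum_\sigma \mathrm{sgn}(\sigma)\,\coeff{\sigma}$, but the argument is identical in substance.
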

\begin{proof}
Consider the map on coefficient systems $p\co \tH^{\otimes t} \to \Alt{\tH^{\otimes t}}$ defined by the formula. 
 \[ (v \in \tH^{\otimes t}) \stackrel{p}{\mapsto} \left( \sum_{\sigma \in S_t} sgn(\sigma) \left(\sigma \cdot v\right) \right)\]
 (it is easy to check that its image indeed lies in $\Alt{\tH^{\otimes t}} \subset \tH^{\otimes t}$). The map on cohomology $\coeff{p}$ has image in $H^*(B; \Alt{\tH^{\otimes t}})$.

 At the same time, if $\alpha \in \Alt{H^{\deg \alpha}(B; \tH^{\otimes t})} \subset H^*(B; \tH^{\otimes t})$, then $\coeff{\sigma} \cdot \alpha = sgn(\sigma)\alpha$, and thus
 \[ \coeff{p}(\alpha) = \sum_{\sigma \in S_t} sgn(\sigma) (\coeff{\sigma} \cdot \alpha) = \sum_{\sigma \in S_t} sgn(\sigma)^2 (\alpha) = t! \cdot \alpha. \]

So, $t! \cdot \alpha \in H^*(B; \Alt{\tH^{\otimes t}})$ as desired.
\end{proof}

\begin{proof}[Proof of Proposition~\ref{pro:twisted-powers-zero}]\label{proof-of-twisted-powers-zero}
Let $\iota \in H^*(B; \tH)$ have odd degree and suppose that the twisted coefficient system $\tH$ has a free abelian group of rank $\leq 2g$ as fiber. Then, we have $\Alt{\tH^{\otimes 2g+1}} = 0$. By the above two lemmas, $t! \iota^t \in H^*\left( B; \Alt{\tH^{\otimes t}} \right)$. So, $(2g+1)! \iota^{2g+1} = 0$ as desired.
\end{proof}

\begin{remark}
    In the above proof, the full strength of the assumption that $\tH$ is free abelian is unnecessary. If the fiber of $\tH$ is any finitely generated abelian group such that $\dim_\QQ(\tH \otimes \QQ) \leq 2g$, then $\Alt{\tH^{\otimes 2g+1}}$ will be a torsion group, and so $\iota^{2g+1}$ will be torsion. If $\tH$ is generated by $2g$ elements and has no 2-torsion, $\Alt{\tH^{\otimes 2g+1}}=0$. 
\end{remark}

 \sectwo{Proof of Theorem~\ref{thm:fund-relations}}\label{sec:proof-main-theorem}
 Let  $d$ be an \emph{odd} natural number and $\pi\co E \to B$ be an oriented Serre fibration with fiber $M_g^{2d}$, a $2d$-dimensional highly connected manifold of genus $g$.

\begin{remark}\label{rem:general-bundles1}
    The result we prove is more general than the statement of Theorem~\ref{thm:fund-relations}, as we do not need to make any assumptions about smoothness of the bundle or of $M_g$.  
    However, to apply the theorem to more general bundles, one would need to define some sort of ``kappa classes'' as pushforwards of some cohomology classes on the total space.  The results of Ebert and Randal-Williams from~\cite{ebert_rw-generalized-mmm2013} show that this is possible in rational cohomology for topological bundles with fiber $M_g$.  Their results also suggests that some kappa classes can be defined this way for \defterm{block bundles} with structure group $\widetilde {\Diff M_g}$. To apply the full strength of our results, one would need also to define \defterm{intersection classes} (see Definition~\ref{def:taut-ring-marked-points}) in such a way that Lemma~\ref{lem:int-class-properties} holds.

     \end{remark}

Let us restate Proposition~\ref{pro:decompose-pushforward-e2} from the last section in a form that does not involve spectral sequences. Let $\tH$ denote the twisted coefficient system $\tH^d(M_g)$  and $\omega$ denote the map
  \[\omega\co \tH \otimes \tH \stackrel{\cup}{\longrightarrow} \tH^{2d}(M_g) \stackrel{or}{\longrightarrow} \ZZ.\]

\begin{proposition}\label{pro:decompose-pushforward-cup}
Let $a \in H^{\deg(a)}(E)$ and $b \in H^{\deg(b)}(E)$ be two  classes such that $\pi_!(a) = 0$ and $\pi_!(b) = 0$. Then there are $\iota \in H^{\deg(a) - d}(B; \tH)$ and $\kappa \in H^{\deg(b)-d}(B; \tH)$ that depend only on $a,\,b$ (respectively) such that $\pi_!(a \cup b)$ is the image of $\iota \otimes \kappa$ under the composition of maps
\begin{equation}
  \label{eq:decompose-pushforward}\xymatrix@R=2.5ex{
    H^{\deg(a) - d}(B; \tH) \otimes H^{\deg(b) - d}(B; \tH) \ar[r]^-{\cup} &
    H^{i}(B; \tH \otimes \tH) \ar[r]^-{\coeff{\omega}} &
    H^{i}(B; \mathbb{Z})
    \\
          \iota \otimes \kappa \ar@{|->}[rr] \ar@{}[u]|{\rotatebox{90}{$\in$}}& & \pi_!(a \cup b) \ar@{}[u]|{\rotatebox{90}{$\in$}}
}
\end{equation}
where $i = \deg(a) + \deg(b) - 2d$. 
\end{proposition}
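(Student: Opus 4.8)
The plan is to deduce this statement directly from Proposition~\ref{pro:decompose-pushforward-e2}, which we have already established in the language of the spectral sequence, by unwinding the definitions of the two products $\bullet$ and $\coeff{or}$ that appear there. The whole content is that the abstract $E_2$-page product $\bullet: E_2^{p,d} \otimes E_2^{p',d} \to E_2^{p+p',2d}$, when restricted to the two rows $q = d$ and composed with the orientation isomorphism $\coeff{or}: E_2^{p+p',2d} \toiso H^{p+p'}(B;\ZZ)$, is literally the map~\eqref{eq:decompose-pushforward} built from the cup product with twisted coefficients followed by $\coeff{\omega}$.

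First I would recall from~\eqref{eq:e2-product-general} the definition of the $E_2$-product: it factors as the twisted cup product $\cup$ of~\eqref{eq:cup-prod-twisted-coeff-expand}, landing in $H^{p+p'}(B; \tH^d(M) \otimes \tH^d(M))$, followed by the change-of-coefficients map $\coeff{\cup}$ induced by the fiberwise cup product $\cup: \tH^d(M) \otimes \tH^d(M) \to \tH^{2d}(M)$ of~\eqref{eq:cup-prod-twisted-coeff-collapse}. Writing $\tH = \tH^d(M)$, this gives the composite
\begin{equation*}
  H^p(B;\tH) \otimes H^{p'}(B;\tH) \xrightarrow{\ \cup\ } H^{p+p'}(B;\tH\otimes\tH) \xrightarrow{\ \coeff{\cup}\ } H^{p+p'}(B;\tH^{2d}(M)).
\end{equation*}
Now I would post-compose with $\coeff{or}: \tH^{2d}(M) \toiso \ZZ$. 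Since the change-of-coefficients construction is functorial in maps of coefficient systems, the composite $\coeff{or} \circ \coeff{\cup}$ equals $\coeff{(or \circ \cup)} = \coeff{\omega}$, where $\omega = or \circ \cup: \tH \otimes \tH \to \ZZ$ is exactly the map defined just before the statement. This identifies the composite $\coeff{or} \circ \bullet$ with the map in~\eqref{eq:decompose-pushforward}, with $p = \deg(a) - d$ and $p' = \deg(b)-d$.

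It then remains to set $\iota = \xi(a)$ and $\kappa = \xi(b)$, where $\xi$ is the secondary pushforward: the hypotheses $\pi_!(a) = 0$, $\pi_!(b) = 0$ mean $a \in \K{\deg(a)}$ and $b \in \K{\deg(b)}$, so the secondary pushforwards are defined and lie in $H^{\deg(a)-d}(B;\tH)$ and $H^{\deg(b)-d}(B;\tH)$ respectively. Proposition~\ref{pro:decompose-pushforward-e2} then says precisely that $\pi_!(a \cup b)$ is the image of $\xi(a) \otimes \xi(b)$ under $\coeff{or} \circ \bullet$, which we have just shown equals~\eqref{eq:decompose-pushforward}. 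The only subtlety to flag is that $\xi$ is merely a correspondence, not a homomorphism, so $\iota$ and $\kappa$ depend on the chosen representatives; but the statement only asserts the existence of such classes depending on $a$ and $b$, which is exactly what the fixed choice of $\xi$ provides. The least routine point is verifying the functoriality $\coeff{or} \circ \coeff{\cup} = \coeff{\omega}$, which follows from the general properties of change of coefficients recorded in Section~\ref{sec:facts-cup-product}; everything else is a direct reindexing of Proposition~\ref{pro:decompose-pushforward-e2}.
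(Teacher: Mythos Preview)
Your proposal is correct and follows essentially the same approach as the paper: both deduce the result from Proposition~\ref{pro:decompose-pushforward-e2} by unwinding the definition~\eqref{eq:e2-product-general} of the $E_2$-product and using the functoriality $\coeff{or}\circ\coeff{\cup}=\coeff{\omega}$ to identify $\coeff{or}\circ\bullet$ with the map~\eqref{eq:decompose-pushforward}. Your explicit identification $\iota=\xi(a)$, $\kappa=\xi(b)$ and your remark about $\xi$ being a correspondence are left implicit in the paper's version but are entirely in line with its argument.
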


\begin{proof}
    The map~\eqref{eq:decompose-pushforward-e2-version} from Proposition~\ref{pro:decompose-pushforward-e2} is the composition of the product on the $E_2$ page of the spectral sequence~\eqref{eq:e2-product-general} with the orientation isomorphism on coefficients:
\begin{multline*}
 (\coeff{or} \circ \bullet)\co E_2^{p,d} \otimes E_2^{p',d} = H^p(B; \tH) \otimes H^{p'}(B; \tH) \stackrel{\cup}{\longrightarrow} \\
 \stackrel{\cup}{\longrightarrow} H^{p+p'}(B; \tH \otimes \tH) \stackrel{\coeff{\cup}}{\longrightarrow} H^{p+p'}(B; \tH^{2d}(M_g)) \stackrel{\coeff{or}}{\longrightarrow} H^{p+p'}(B; \ZZ).
\end{multline*}

The composition of the last two arrows in the above diagram is precisely $\coeff{\omega}$, and thus the maps~\eqref{eq:decompose-pushforward-e2-version} and~\eqref{eq:decompose-pushforward} coincide.
\end{proof}

Note that if $\deg(a)$ is \emph{even} while $d$ is \emph{odd}, then $\deg(\iota)$ will be \emph{odd}.

\bigskip

Now, the following proposition implies that the map~\eqref{eq:decompose-pushforward} commutes with taking further cup products. The point is that one can compute the value of $\pi_!(a \cup b)^{l}$ from the values of $\iota^l$ and $\kappa^l$. More precisely, we have:

\begin{proposition}\label{pro:decomposition-powers-commute}
The following diagram commutes (only up to sign in the top right corner).

 \newcommand{\mymultline}[1]{ {\begin{array}{c} #1 \end{array}} }
\vspace{-0.5cm}
\[\xymatrix{
\ar@{}[r]|-{\substack{(\iota \otimes \kappa) \\ \otimes \\\cdots \\ \otimes \\ (\iota \otimes \kappa)} \in} &
 \left(\!\!
\mymultline{H^{\deg(a) - d}(B; \tH) \\ \otimes \\  H^{\deg(b) - d}(B; \tH)}
\!\!\right)^{\!\!\!\otimes l}
  \ar[d]^-{\cup} \ar[drrr]^{\cup} 
  \ar[rrr]^{\pm \left( \substack{\text{permute coord., }\\ \text{then }\cup \otimes \cup}  \right)}
  & & &
  \mymultline{H^{(\deg(a) - d)\cdot l}\left(B; \tH^{\otimes l}\right) \\ \otimes \\ H^{(\deg(b) - d)\cdot l}\left(B; \tH^{\otimes l}\right)}
  \ar[d]_-{\cup}^-{\substack{\text{then permute} \\ \text{coefficients}}} 
& \ar@{}[l]|-{\ \ \ \ni \ 
    \substack{
        \!\pm(\iota \cup \cdots \cup \iota) \\ \otimes \\ (\kappa \cup \cdots \cup \kappa)
    }
} 
 \\
& H^{i}(B; \tH \otimes \tH)^{\otimes l} 
\ar[rrr]^{\cup}\ar[d]^-{\left(\coeff{\omega}\right) ^ {\otimes l}}
& & &
H^{il}\!\left(B; (\tH \otimes \tH)^{\otimes l}\right)
\ar[d]^-{\coeff{\left(\omega^{\otimes l} \right)}} 
    \\
\ar@{}[r]|-{\pi_!(a \cup b)^{\otimes l} \  \in} &
H^{i}(B; \mathbb{Z})^{\otimes l} \ar[rrr]^{\cup}
& & &
H^{il}\!\left( B;\ZZ^{\otimes l} \cong \ZZ \right)    & \ar@{}[l]|-{\ni \ \pi_!(a \cup b)^l} 
}\]
\end{proposition}
\begin{proof}
  The commutativity of this diagram follows from repeated applications of the associativity of cup product and the fact that cup product commutes with change of coefficients. In the top right corner, we need to also use the commutativity of cup product, which may insert a sign.
\end{proof}

\begin{proof}[Proof of Theorem~\ref{thm:fund-relations}]
    Let $a, b \in H^*(E; \ZZ)$ be two classes such that $\pi_!(a) = 0$, $\pi_!(b) = 0$, and $\deg(a)$  is even. By the Proposition~\ref{pro:decomposition-powers-commute} and the decomposition~\eqref{eq:decompose-pushforward}, we see that there are
\[ \iota \in H^{\deg(a) - d}(B; \tH) \quad \text{and} \quad \kappa \in H^{\deg(b) - d}(B; \tH)  \]
such that $\pi_!(a \cup b)^{2g+1}$ is the image of $\iota^{2g+1} \cup \kappa^{2g+1}$ under some group homomorphism (the composition of the vertical maps on the right side of the diagram in Proposition~\ref{pro:decomposition-powers-commute}). Since $\deg(a)$ is even and $d$ is odd, $\iota$ has odd cohomological degree. Since $\rank \tH = \rank H^d(M_g; \ZZ) = 2g$, Proposition~\ref{pro:twisted-powers-zero} states that $(2g+1)!\cdot \iota^{2g+1} = 0$. This proves that $(2g+1)! \cdot \pi_!(a \cup b)^{2g+1} = 0$.

Similarly, $\pi_!(a \cup a)^{g+1}$ is the image of $\iota^{g+1} \cup \iota^{g+1} = \iota^{2g+1} \cup \iota$ under a group homomorphism. Again,  $(2g+1)! \cdot \iota^{2g+1} = 0$ and thus $(2g+1)! \cdot \pi_!(a \cup a)^{g+1} = 0$.
\end{proof}

\secone[Generating Relations]{Generating Relations using Methods of Randal-Williams}
\label{sec:using-big-theorem}

In this section, we apply Theorem~\ref{thm:fund-relations} to obtain the results claimed in the introduction as well as some additional relations in $\ker \taut_d$.

\sectwo{Further properties of pushforwards}\label{sec:further-properties-pushforwards} 
To do our calculations, we will use the following properties of the pushforward map. 

\begin{proposition}[Properties of the pushforward map] \label{pro:properties-pushforward}
    Let $\pi\co E \to B$ be an oriented Serre fibration with some closed manifold $M$ as fiber. The pushforward map  $\pi_!\co H^{*+\dim(M)}(E; \ZZ) \to H^*(B; \ZZ)$, as defined in Definition~\ref{def:ss-pushforward}, satisfies the following:  

\begin{enumerate}
\item  For any classes $a \in H^*(E; \ZZ)$ and $b \in H^*(B; \ZZ)$, we have
    \[ \pi_!\left(a \cup \pi^*(b)\right) = \pi_!(a) \cup b. \]
This makes the pushforward into a map of $H^*(B; \ZZ)$-modules, and is sometimes called the \defterm{push-pull formula}.

\item As already mentioned in Section~\ref{sec:pushforward-intro}, pushforwards are \emph{natural} with respect to maps $f\co A\to B$. If $\pi'\co f^*(E) \to A$ is the pullback of the fibration $\pi\co E \to B$, then for any $a \in H^*(E; \ZZ)$, we have $f^*\left(\pi_!(a)\right) = \pi'_!\left(f^*(a)\right)$.

\item Suppose both maps $G \stackrel{\pi''}{\to} E \stackrel{\pi}{\to} B$ are oriented Serre fibrations with (possibly different) closed oriented manifolds as fibers. Then, so is the composition $(\pi \circ \pi'')\co G \to B$. Pushforward maps are \emph{functorial} in the sense that $\pi_! \circ \pi''_! = (\pi \circ \pi'')_!$ as maps from the cohomology of $G$ to the cohomology of $B$.
\end{enumerate}
\end{proposition}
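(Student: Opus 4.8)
The final statement to prove is Proposition~\ref{pro:properties-pushforward}, which collects three standard properties of the pushforward (Gysin) map: the push-pull formula, naturality, and functoriality. The plan is to prove each property directly from the spectral-sequence definition of $\pi_!$ given in Definition~\ref{def:ss-pushforward}, namely the composition in~\eqref{eq:ss-pushforward}, tracing the relevant classes through the filtration, the projection to $E_\infty^{n-2d,2d}$, the inclusion $E_\infty^{n-2d,2d} \hookrightarrow E_2^{n-2d,2d}$, and the orientation isomorphism $\coeff{or}$.

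For the \emph{push-pull formula}, I would first observe that $\pi^*(b) \in H^*(E)$ lies in the bottom row ($q=0$) of the spectral sequence, i.e. it is represented on the $E_2$ page by a class in $E_2^{\deg b, 0} = H^{\deg b}(B; \tH^0(M)) = H^{\deg b}(B;\ZZ)$, and this representative is exactly $b$ itself under the edge homomorphism. Because the filtration respects the cup product (as stated just after~\eqref{eq:filtration}) and the $E_\infty$-product is induced from the $E_2$-product~\eqref{eq:e2-product-general}, multiplying $a$ by $\pi^*(b)$ shifts $a$ within the filtration and, on the $E_2$ page, multiplies its representative in the $2d$-th row by $b$ in the $0$-th row via~\eqref{eq:e2-product-general}. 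Since the product map $E_2^{n-2d,2d} \otimes E_2^{\deg b,0} \to E_2^{n-2d+\deg b,2d}$ collapses via $\coeff{\cup}$ to ordinary multiplication by $b$ on $H^*(B;\ZZ)$ after applying $\coeff{or}$, the identity $\pi_!(a \cup \pi^*(b)) = \pi_!(a)\cup b$ falls out. This is essentially the same bookkeeping as in the proof of Proposition~\ref{pro:decompose-pushforward-e2}, restricted to one factor living in the $q=0$ row.

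For \emph{naturality}, I would use that a map $f:A\to B$ induces a morphism of Serre spectral sequences: pulling back the fibration gives a map of fibrations $f^*(E) \to E$ covering $f$, hence a map on each page compatible with differentials, filtrations, and products, and on the $E_2$ page it is the map $H^*(B;\tH) \to H^*(A; f^*\tH)$ induced by $f$ on twisted cohomology. The orientation of $f^*(E)$ is by definition the pullback of the orientation of $E$, so $\coeff{or}$ commutes with the induced maps. Chasing a class $a\in H^n(E)$ through~\eqref{eq:ss-pushforward} for both fibrations and using this compatibility at each of the four stages yields $f^*(\pi_!(a)) = \pi'_!(f^*(a))$. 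For \emph{functoriality} of a composite $G \xrightarrow{\pi''} E \xrightarrow{\pi} B$, the cleanest route is to invoke naturality together with the multiplicativity already established: I would either appeal to the Leray–Serre spectral sequence of a composite fibration (the fact that the pushforward for the composite factors through the two successive $2d$-rows, giving a degree shift of $\dim M + \dim M''$), or, more safely within the tools available here, reduce to the product-of-fibers case and compose the two edge-homomorphism/orientation descriptions; the orientation on $G\to B$ is the one making the two integrations-along-the-fiber compose, so $\pi_!\circ\pi''_! = (\pi\circ\pi'')_!$.

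The main obstacle I anticipate is \emph{functoriality}, part~(3). The push-pull and naturality statements are routine diagram chases once one accepts the multiplicativity and naturality of the Serre spectral sequence, both of which are available from the convergence theorem (Theorem~\ref{thm:convergence-of-serre}) and the standard references cited. Functoriality, however, requires comparing the spectral sequence of the composite fibration with the two individual ones and checking that the two orientations and the two $2d$-row edge maps assemble correctly; the subtle point is identifying the orientation of the composite and verifying that no sign or coefficient-twisting discrepancy arises when the fibers have different dimensions. I would handle this by first treating the case where the relevant pushforwards land in the top rows (so that, as in Lemmas~\ref{lem:einfty-sub-e2-on-2d-row}, there are no incoming differentials to obstruct the identification), and then reducing the general statement to this case; alternatively, since these are standard and well-documented facts about the Gysin homomorphism, it is legitimate to cite~\cite{borel-hirzebruch59} and~\cite{boardman69} for the detailed verification and to indicate only the spectral-sequence picture here.
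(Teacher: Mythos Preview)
Your proposal is more than the paper itself provides: the paper does not prove Proposition~\ref{pro:properties-pushforward} at all, but simply writes ``For proofs, we refer to~\cite[\S 8]{borel-hirzebruch59}.'' So there is no proof in the paper to compare against.

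That said, your sketches for parts~(1) and~(2) are correct and are exactly the arguments one finds in Borel--Hirzebruch: the push-pull formula is precisely the multiplicativity of the spectral sequence with one factor in the $q=0$ row, and naturality is the functoriality of the Serre spectral sequence under maps of fibrations. Your instinct to model the argument for~(1) on Proposition~\ref{pro:decompose-pushforward-e2} is good; it is the same diagram chase with one factor degenerating.

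For part~(3) you are right to flag it as the delicate one. The honest issue is that the spectral-sequence definition of $\pi_!$ does not make the composite statement obvious: one really needs to compare the filtration for $G\to B$ with the two-step filtration coming from $G\to E\to B$, and check that the top-row identification for the composite agrees with the two successive top-row identifications (including the orientation convention on the product fiber). Your suggestion to ``reduce to the product-of-fibers case'' is not quite a reduction, since the fiber of the composite need not be a product; what actually works is the argument in \cite[\S 8]{borel-hirzebruch59}, which uses the transitivity of the Leray--Serre filtration. Since you end by allowing yourself to cite exactly that reference, your proposal is ultimately no weaker than the paper's own treatment, and for~(1) and~(2) it is more informative.
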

For proofs, we refer to~\cite[\S 8]{borel-hirzebruch59}.

We will also need the following well-known fact:
      \begin{lemma}\label{lem:pushforward-euler}
    Let $\pi\co E \to B$ be an oriented manifold bundle such that $B$ is connected and the fiber is a closed connected oriented manifold $M$. Let $e = e\left(T_\pi E \to E\right) \in H^{\dim M}(E; \ZZ)$. Then, $\pi_!(e) = \chi(M) \in H^0(B; \ZZ)$ where $\chi(M) \in \ZZ$ is the Euler characteristic of $M$.
\end{lemma}
\begin{proof}
    First consider the case when $B$ is a point and $E = M$. The vertical tangent bundle then coincides with the tangent bundle of $M$. Its Euler class is $e(TM \to M) = \chi(M) \cdot [M]$, where $[M]$ is the generator of $H^{\dim M}(M; \ZZ)$ determined by the orientation. It follows easily from Definition~\ref{def:ss-pushforward} that $\pi_!([M]) = 1$ and therefore $\pi_!(\chi(M) \cdot [M])= \chi(M) \in H^0(\{*\})$ by the push-pull formula.

    In general, consider the inclusion of a point $\{*\} \hookrightarrow B$. The induced map on $H^0$ is an isomorphism. The desired statement follows from the fact that the Euler class, the vertical tangent bundle, and the pushforward map are all natural with respect to the pullbacks of bundles.
\end{proof}

\begin{remark}
    For manifold bundles, there is a commonly used alternative definition of the pushforward map that uses the Pontryagin-Thom construction (see~\cite{boardman69} or~\cite[\S 4]{becker-gottlieb75}). It coincides with our definition of the pushforward map rationally and, moreover, the two definitions coincide for integral cohomology as long as $B$ is a CW complex of finite type (see Appendix~\ref{sec:pushforward-madness}). We do not know whether the two definitions coincide nor whether Theorem~\ref{thm:fund-relations} applies integrally to the Pontryagin-Thom pushforward more generally, particularly when $B=\BDiff M$.
\end{remark}

\sectwo{Notation and conventions}\label{sec:notation-conventions-for-calc}
For the remainder of this section, we assume that all cohomology has rational coefficients. Thus, we ignore the integral multiple of Theorem~\ref{thm:fund-relations}.

Throughout, $M^{2d}_g$ denotes a $2d$-dimensional highly-connected manifold of genus $g$ (Definition~\ref{def:highly-connected-manifold}). The most important case is when $M_g=\sumspheres{d}$. 

We assume that $2-2g \neq 0$ throughout, and that $2-2g < 0$ in Section~\ref{sec:proof-finite-generation}.  By the \defterm{tautological ring}, we mean the image of the map $\taut_d$. We denote this subring by $\taut^* = \image\left( \taut_d \right) \subset H^*(\BDiff M_g^{2d};\QQ)$.

\sectwo{Direct applications of Theorem~\ref{thm:fund-relations} and the radical}
In this section, we illustrate how one can obtain relations using Theorem~\ref{thm:fund-relations} directly. These calculations can serve as a warm-up for more complicated calculations described in section~\ref{sec:proof-finite-generation}. We prove that the tautological ring modulo nilpotent elements is generated by at most $2d$ elements. 

\begin{example}\label{exple:relation-low-pontryagin}
    Consider a manifold bundle $\pi\co E \to B$ with $2d$-dimensional fiber $M^{2d}_g$ and $d$ odd (for example, the universal bundle). If a Pontryagin class $p_i \in H^{4i}(E)$ satisfies $4i < \dim M_g$ then $\pi_!(p_i) = 0$. So, the argument of Proposition~\ref{pro:lowest-relation}  applies to it and we have the following relation concerning $\gkappa{p_i^2} = \pi_!( p_i^2) \in H^{4i \cdot 2 - 2d}(B)$.
    \[ \left(\gkappa{p_i^2}\right)^{g+1} = 0 \in H^{(8i- 2d) (g+1)}(B) \text{ for } i < \frac{d}{2} = \frac{\dim M}{4}. \]
\end{example}

\begin{example}\label{exple:change-any-coh-class-to-int-to-zero}
    More generally, let $p \in H^{2 \cdot *}(E)$ be any characteristic class of even degree. Assuming that the Euler characteristic $\chi = 2-2g$ is not zero, we can use the Euler class of the vertical tangent bundle $e \in H^{2d}(E)$ to construct the class $a = p - (e / \chi) \cdot \pi^*\left(\pi_!(p)\right) \in H^*(E)$. Because of the push-pull formula (Proposition~\ref{pro:properties-pushforward}) and Lemma~\ref{lem:pushforward-euler}, this class satisfies $\pi_!(a) = 0$. 

Let $q \in H^{2\cdot*}(E)$ be another such class. We apply the procedure just described and Theorem~\ref{thm:fund-relations} to obtain the following formula (we use the notation $\pi_!(p) = \gkappa{p}$).
\begin{multline}\label{eq:relation-p-kappa_q-kappa} 
    0 = \left( \pi_!\left((p - \frac{e}{\chi}\gkappa{p})(q - \frac{e}{\chi}\gkappa{q})\right) \right)^{2g+1} 
       = \\ \left(\gkappa{pq} - \frac{\gkappa{ep}}{\chi}\gkappa{q}-\frac{\gkappa{eq}}{\chi}\gkappa{p} + \frac{\gkappa{e^2}}{\chi^2} \gkappa{p}\gkappa{q} \right)^{2g+1}\hspace{-1.9em}.\hspace{1.9em}
\end{multline}
\end{example}

Let $\sqrt{0} \subset \taut^*$ denote the \defterm{radical} of the tautological ring (that is, the ideal consisting of all the nilpotent element, also known as the \defterm{nilradical}). The following easy fact, together with our finite-generation result (Theorem~\ref{thm:finite-generation}), provides motivation to consider it.
\begin{lemma}\label{lem:radical-and-fg}
If a graded commutative ring $A^*$ is finitely generated as an $A^0$-algebra and $A^0$ is a field, then the following statements are equivalent:
\[ 1)\  A^*\text{ is finite-dimensional}\qquad 2)\ A^* / \sqrt{0} = A^0
\qquad 3)\ \dim_{\textrm{Krull}} A^* = 0. \]
\end{lemma}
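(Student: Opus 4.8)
The plan is to prove the three equivalences for a graded commutative ring $A^*$ that is finitely generated as an $A^0$-algebra with $A^0$ a field, by establishing a cycle of implications. The key structural observations are that $A^*$ is a finitely-generated commutative algebra over a field, so standard commutative algebra (Noetherian-ness, the graded Nullstellensatz-type reasoning) applies, and that gradedness lets us pass freely between ``nilpotent'' and ``positive-degree'' arguments.

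\medskip

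First I would prove $(1) \Rightarrow (2)$. Suppose $A^*$ is finite-dimensional over the field $A^0$. Then every homogeneous element $x$ of positive degree generates powers $x, x^2, x^3, \ldots$ lying in strictly increasing degrees, so finite-dimensionality forces $x^N = 0$ for some $N$; hence every positive-degree homogeneous element is nilpotent. Since $A^* = A^0 \oplus A^{>0}$ and $A^{>0}$ consists of nilpotents (a sum of nilpotents in a commutative ring is nilpotent), we get $A^{>0} \subseteq \sqrt{0}$, and as $A^0$ is a field containing no nilpotents, $A^{>0} = \sqrt{0} \cap A^{>0}$. Therefore $A^*/\sqrt{0} \cong A^0$, giving $(2)$.

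\medskip

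Next I would prove $(2) \Rightarrow (3)$. If $A^*/\sqrt{0} = A^0$ is a field, then the reduced ring $A^*/\sqrt{0}$ has Krull dimension $0$. Since the nilradical is contained in every prime ideal, the quotient map induces a bijection on prime spectra (and on chains of primes), so $\dim_{\textrm{Krull}} A^* = \dim_{\textrm{Krull}}(A^*/\sqrt{0}) = 0$, which is $(3)$.

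\medskip

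Finally I would prove $(3) \Rightarrow (1)$, which I expect to be the main obstacle since it is the implication requiring genuine commutative algebra rather than a formal manipulation. Here I would invoke the fact that $A^*$ is a finitely-generated algebra over the field $A^0$, hence Noetherian. A finitely-generated algebra over a field with Krull dimension $0$ is Artinian, and an Artinian algebra that is finitely generated over a field is finite-dimensional as a vector space over that field. Concretely, one can argue via Noether normalization: Krull dimension $0$ means the normalization involves no polynomial variables, so $A^*$ is a finite module over $A^0$ itself, i.e.\ finite-dimensional. Alternatively, in the graded setting one can argue directly that $\dim_{\textrm{Krull}} A^* = 0$ forces each homogeneous generator to be nilpotent (otherwise a non-nilpotent generator would produce a chain of primes of length $\geq 1$ after quotienting by a suitable prime), and then finitely many nilpotent generators span a finite-dimensional algebra because only finitely many monomials in them are nonzero. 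I would use whichever of these is cleanest; the graded direct argument is attractive because it avoids citing Noether normalization and stays closer to the structure at hand. This closes the cycle $(1) \Rightarrow (2) \Rightarrow (3) \Rightarrow (1)$ and establishes the equivalence.
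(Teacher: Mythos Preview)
Your proof is correct. Note, however, that the paper does not actually prove this lemma: it is introduced as ``the following easy fact'' and stated without proof, serving only as motivation for studying the nilradical of the tautological ring. So there is no paper proof to compare against; your argument simply supplies the omitted details.

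Your cycle $(1)\Rightarrow(2)\Rightarrow(3)\Rightarrow(1)$ is sound. For $(3)\Rightarrow(1)$, either route you describe works; the graded-direct version can be made precise as follows. If some homogeneous generator $x$ of positive degree were not nilpotent, pick a prime $\mathfrak{p}$ avoiding $x$ and pass to the graded prime $\mathfrak{p}^h$ generated by the homogeneous elements of $\mathfrak{p}$; then $\mathfrak{p}^h \subsetneq A^{>0}$ is a strict chain of primes (since $x\in A^{>0}\setminus\mathfrak{p}^h$), contradicting Krull dimension $0$. Hence all homogeneous generators are nilpotent, and finitely many nilpotent generators yield only finitely many nonzero monomials, so $A^*$ is finite-dimensional over $A^0$. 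This keeps the argument self-contained without invoking Noether normalization, which fits the spirit of the paper's ``easy fact'' remark.
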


Example~\ref{exple:change-any-coh-class-to-int-to-zero} implies

\begin{lemma}\label{lem:radical-ideals-generators}
    In the ring $\taut^*/\sqrt{0}$, $\gkappa{pq}$ is in the ideal generated by $\gkappa{p}$ and $\gkappa{q}$.
\end{lemma}

\begin{proof}
    The expression~\eqref{eq:relation-p-kappa_q-kappa} implies that $\gkappa{pq} - \frac{\gkappa{ep}}{\chi}\gkappa{q}-\frac{\gkappa{eq}}{\chi}\gkappa{p} + \frac{\gkappa{e^2}}{\chi^2} \gkappa{p}\gkappa{q} \in \sqrt{0}$.
\end{proof}

\begin{proposition}\label{pro:taut-mod-radical-generators}
    If $g \neq 1$, the ring $\taut^*/\sqrt{0}$ is generated by the $2d$ elements in the set $E=\left\{ \gkappa{p_i}, \gkappa{p_i \cdot e} \mid 1 \leq i \leq d  \right\}$. So, the Krull dimension of the ring $\taut^*$ is at most $2d$. 
\end{proposition}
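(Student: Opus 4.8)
The plan is to show that, modulo the nilradical $\sqrt0$, every algebra generator of $\taut^*$ already lies in the subalgebra generated by $E$. Since $\taut^*=\image(\taut_d)$ is generated as a $\QQ$-algebra by the classes $\gkappa{m}$ with $m$ ranging over the monomials in $p_1,\ldots,p_{d-1},e$ (and writing $p_d=e^2$, the top Pontryagin class of the rank-$2d$ vertical tangent bundle, so that $E=\{\gkappa{p_i},\gkappa{p_i e}\mid 1\le i\le d\}$), it suffices to prove that each such $\gkappa{m}$ lies in $\QQ[E]+\sqrt0$. The essential input is the congruence supplied by Example~\ref{exple:change-any-coh-class-to-int-to-zero} (equivalently Lemma~\ref{lem:radical-ideals-generators}): for any two monomials $p,q$ one has, modulo $\sqrt0$,
\[
  \gkappa{pq}\;\equiv\;\tfrac1\chi\,\gkappa{ep}\,\gkappa{q}\;+\;\tfrac1\chi\,\gkappa{eq}\,\gkappa{p}\;-\;\tfrac1{\chi^2}\,\gkappa{e^2}\,\gkappa{p}\,\gkappa{q},
\]
where $\chi=2-2g$. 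This is exactly where the hypothesis $g\neq 1$ enters, since we must divide by $\chi$.

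The main step is an induction on $P(m)$, the number of Pontryagin factors of the monomial $m=e^{a_0}p_1^{a_1}\cdots p_{d-1}^{a_{d-1}}$, that is $P(m)=a_1+\cdots+a_{d-1}$. In the inductive step $P(m)\ge 1$, I would factor off a single Pontryagin class, $m=p_i\cdot m'$, and apply the displayed congruence with $p=p_i$ and $q=m'$. The two ``$p$-only'' classes that appear, $\gkappa{p_i}$ and $\gkappa{e\,p_i}$, are by definition elements of $E$, and $\gkappa{e^2}\in E$ as well; the remaining classes $\gkappa{m'}$ and $\gkappa{e\,m'}$ both satisfy $P=P(m)-1$, since multiplying by $e$ does not change the Pontryagin count. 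Hence they lie in $\QQ[E]+\sqrt0$ by the inductive hypothesis, and therefore so does $\gkappa{m}$.

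The base case $P(m)=0$, where $m=e^k$ is a pure power of the Euler class, requires a separate sub-induction on $k$. For $k\le 3$ the classes $\gkappa{1}=0$, $\gkappa{e}=\chi$ (a scalar), and $\gkappa{e^2},\gkappa{e^3}\in E$ are immediate; for $k\ge 4$ I would write $e^k=e^{k-2}\cdot e^2$ and apply the congruence, which expresses $\gkappa{e^k}$ in terms of $\gkappa{e^2},\gkappa{e^3}\in E$ together with the lower powers $\gkappa{e^{k-1}},\gkappa{e^{k-2}}$ already handled by the sub-induction. Once every generator is shown to lie in $\QQ[E]+\sqrt0$, the ring $\taut^*/\sqrt0$ is generated by the $2d$ elements of $E$; since Krull dimension is unchanged by passing to the quotient by the nilradical, and a $\QQ$-algebra generated by $2d$ elements is a quotient of $\QQ[x_1,\ldots,x_{2d}]$, we conclude $\dim_{\mathrm{Krull}}\taut^*\le 2d$.

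The main obstacle is choosing an induction measure that actually terminates: the congruence trades $\gkappa{pq}$ for classes carrying an extra factor of $e$, so an induction on total degree, or on the total number of factors, would fail to decrease. The key realization is that the right quantity is the number of Pontryagin factors, which the reduction strictly decreases on the two nontrivial terms $\gkappa{m'}$ and $\gkappa{e\,m'}$, while pushing every newly created factor of $e$ into classes $\gkappa{p_i},\gkappa{e\,p_i},\gkappa{e^2}$ that are manifestly in $E$. The pure-$e$ base case is then the only place where a genuinely separate argument is needed.
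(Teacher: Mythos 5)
Your proof is correct, and it runs on the same engine as the paper's: the congruence of Lemma~\ref{lem:radical-ideals-generators} (i.e., relation~\eqref{eq:relation-p-kappa_q-kappa} from Example~\ref{exple:change-any-coh-class-to-int-to-zero}), which is exactly where $g \neq 1$ enters. The difference is the induction scheme, and it is substantive. The paper inducts on cohomological degree: it asserts that every generator outside $E$ can be written as $\gkappa{pq}$ with \emph{both} $\gkappa{p}$ and $\gkappa{q}$ of strictly positive degree, whereupon every class appearing in the congruence has strictly smaller degree. That assertion fails for monomials built from low Pontryagin classes: for $d=3$, the generators $\gkappa{p_1^3}$ and $\gkappa{p_1p_2}$ admit only factorizations having $p_1$ as one factor, and $\gkappa{p_1}\in H^{-2}(\BDiff M_g)=0$ has negative, not positive, degree. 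The paper's argument survives in those cases only because such $\gkappa{p_i}$ vanish, which kills the problematic terms of the congruence --- notably the term containing $\gkappa{e m'}$, whose degree \emph{exceeds} that of $\gkappa{m}$, which is precisely the hazard you flagged --- but this patch is not spelled out there. Your induction on the Pontryagin count $P(m)$, with the sub-induction on powers of $e$ in the base case, needs no such case analysis: $\gkappa{m'}$ and $\gkappa{e m'}$ always have $P$ smaller by one regardless of what happens to degrees, and every other class in the congruence ($\gkappa{p_i}$, $\gkappa{p_i e}$, $\gkappa{e^2}$, $\gkappa{e^3}$) is manifestly an element of $E$ or a constant. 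So your argument is a bit longer but uniformly valid, and in fact closes a small gap in the paper's two-sentence proof; the paper's degree-based formulation buys brevity and works verbatim exactly when the required both-factors-positive splitting exists.
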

\begin{proof}
    Every generator of $\taut^*$ that is not in $E$ can be written as $\gkappa{pq}$ so that $p,q \neq e$. This uses the fact that $p_d = e^2$. It follows that whenever either $\gkappa{p}$ or $\gkappa{q}$ is not zero, it has strictly positive cohomological degree. By Lemma~\ref{lem:radical-ideals-generators},  $\gkappa{pq}$ is decomposable in $\taut^*/\sqrt{0}$ as a polynomial in classes of smaller degree. It follows that $\taut^*/\sqrt{0}$ is generated by the elements of $E$.
\end{proof}

\sectwo{The classifying spaces of manifolds with marked points}
To get additional relations, we will use the methods of~\cite{oscar12-rel-revisited}. Those methods involve certain natural bundles with structure group $\Diff M_g$ and fiber $(M_g)^{\times n} =M_g \times \cdots \times M_g$. In this section, we introduce these bundles and the special characteristic classes they possess. The discussion is completely analogous to the two-dimensional case, as described in~\cite[Section 2.1]{oscar12-rel-revisited}. 

\begin{notation}
     In this section, we denote the universal bundle $\EDiff M_g \times_{\Diff M_g} M_g \to \BDiff M_g$ with fiber $M_g$ as $\Mgtot^{2d} \to \Mg^{2d}$.  The notation refers to the fact that in the $d=1$ case, the space $\Mg^2$ has the same rational cohomology as the moduli space of Riemann surfaces.  We will also use the notation `$\hquot$' for homotopy quotients: $(- \hquot \Diff M) := (- \times_{\Diff M} \EDiff M)$. For example, $\Mg = {*}\hquot\Diff M_g$ and $\Mgtot = M_g \hquot\Diff M_g$.
\end{notation}

\newcommand{\Maps}[2]{\operatorname{Map}\!\left( #1; #2 \right)}
For a finite set $I$, we let $\Maps{I}{M_g}$ be the space of maps $I \to M_g$, 
\[\Mgn{I} := \Maps{I}{M_g} \hquot \Diff M_g ,\text{ and } \Mgn{n} := \Mgn{\{1, \ldots, n\}}.\]
The fiber of the natural map $\Mgn{n} \to \Mg$ is $(M_g)^{\times n}$. So, a map from any space $B$ to $\Mgn{n}$ gives rise to a manifold bundle over $B$ with fiber $M_g$ together with a choice of $n$ ordered points in each fiber.
 
For $J \subset I$, there are natural projections $\pi^I_J\co\Mgn{I} \to \Mgn{J}$ and $ \pi_\emptyset^I\co \Mgn{I} \to \Mg$. We can identify  the bundle $\Mgn{1} \to \Mg$ with the universal bundle $\Mgtot \to \Mg$. More generally, the pullback of the universal bundle $\left(\pi_\emptyset^I\right)^*(\Mgtot)$ and $\Mgn{I \sqcup \{\mpoint\}}$ are canonically isomorphic as bundles over $\Mgn{I}$. 

\begin{definition}\label{def:taut-ring-marked-points}
By the \defterm{tautological subring of the cohomology of $\Mgn{I}$} we mean the subring $\taut^*(\Mgn{I}) \subset H^*(\Mgn{I})$ generated by the following three types of classes that we call the \defterm{fundamental tautological classes}:

\begin{itemize}
\item The generalized MMM classes $\gkappa{c} \in H^*(\Mgn{I})$ that are pulled back from $H^*(\Mg)$ using the canonical map $\Mgn{I} \to \Mg$ (there is one such class for each $c \in H^*(\BSO{2d})$).

\item For each choice of $i \in I$, there is a canonical map $\pi_{i}^I\co \Mgn{I} \to \Mgn{\{i\}} \cong \Mgtot$. The vertical tangent bundle determines a classifying map $\gamma\co \Mgtot \to \BSO{2d}$. For each $c \in H^*(\BSO{2d})$ and $i \in I$, we define the class $\gpsi{c}{i}\in H^*(\Mgn{I})$ as the pullback of $c$ via the composition of the above-mentioned maps\footnote{We use parentheses in the notation to prevent confusion with the notation $p_i$ for the $i$-th Pontryagin class.}. 

Note that given $c,d \in H^*(\BSO{2d})$, we clearly have $\gpsi{(cd)}{i} = \gpsi{c}{i}\gpsi{d}{i}$.

\item For each subset $S \subset I$, we consider the \defterm{intersection class} 
\[\intclass{S} \in H^{2d \cdot \left(|S| - 1\right)}(\Mgn{I})\]  defined below. We will write simply $\intclass{1,2}$ for $\intclass{\{1,2\}}$. 
\end{itemize}
\end{definition}

\begin{definition}
    For $S \subset I$, let $\Maps{I/S}{M_g} \subset \Maps{I}{M_g}$ be those maps that send all elements of $S$ to the same point. Note that this inclusion has codimension $(|S|-1) \cdot \dim M$. Let $\Mgn{I/S} = \Maps{I/S}{M_g} \hquot \Diff M_g$. There is an inclusion $i_S\co\Mgn{I/S} \hookrightarrow \Mgn{I}$. As shown in~\cite[Lemma 2.1]{oscar12-rel-revisited}, this inclusion has a Thom class 
    \[\intclass{S}' \in H^{2d(|S|-1)}\left(\vphantom{\sum} \Mgn{I}\!,\, \Mgn{I} - \Mgn{I/S}; \ZZ\right).\]
We define the \defterm{intersection class $\intclass{S}$} to be the image of $\intclass{S}'$ in $H^*(\Mgn{I})$. 
\end{definition}

\begin{lemma}\label{lem:int-class-properties}
  The classes $\intclass{S}$ satisfy the following: 
  \begin{enumerate}[(i)]
  \item For $S \subset I' \subset I$, the class $\intclass{S} \in H^*(\Mgn{I})$ is a pullback of the corresponding class $\intclass{S} \in H^*(\Mgn{I'})$ via the map $(\pi_{I'}^{I})^*$.
  \item If $S$ and $S'$ intersect at a single point, then $\intclass{S}\intclass{S'} = \intclass{S \cup S'} $. For example, in $\Mgn{\{1,2,\mpoint\}}$, we have: $\intclass{1,\mpoint}\intclass{2,\mpoint} = \intclass{1,\mpoint} \intclass{1, 2}$. 
  \item In $\Mgn{2}$, we have $\intclass{1,2}^2 = \intclass{1,2} \cdot \gpsi{e}{1}$ where $e$ is the Euler class.
  \item For any characteristic class $c$, $\intclass{1,2} \cdot \gpsi{c}{1} = \intclass{1,2} \cdot \gpsi{c}{2}$.
  \item The pushforward of the class $\intclass{1,2} \in H^{2d}(\Mgn{2})$ is 1, i.e.
      \[ \left(\pi_{\{1\}}^{\{1,2\}} \right)_! (\intclass{1,2}) = 1 \in H^0(\Mgn{1}). \]
  \end{enumerate}
\end{lemma}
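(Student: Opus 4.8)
The plan is to deduce all five identities from the formal properties of Thom classes of the fiberwise partial diagonals, following the surface case treated in~\cite[Lemma 2.1]{oscar12-rel-revisited}. The underlying geometric picture is that, over $\BDiff M_g$, the inclusion $i_S \colon \Mgn{I/S} \hookrightarrow \Mgn{I}$ is the homotopy quotient of the $\Diff M_g$-equivariant inclusion of the partial diagonal $\Delta_S = \{\, f \in \Maps(I, M_g) : f|_S \text{ is constant}\,\}$ into $\Maps(I, M_g)$. Since $\Diff M_g$ acts by orientation-preserving diffeomorphisms, $\Delta_S$ carries an equivariant oriented normal bundle, canonically a sum of $|S|-1$ copies of the tangent bundle $T M_g$, and hence so does $i_S$ after passing to homotopy quotients. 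This yields the Thom class $\intclass{S}'$ and a Gysin (umkehr) map $(i_S)_{!}$ for the embedding, satisfying the standard identities $\intclass{S} = (i_S)_{!}(1)$ and the push--pull formula $\intclass{S} \cup \alpha = (i_S)_{!}\!\left( i_S^*\alpha \right)$ for $\alpha \in H^*(\Mgn{I})$. Each part of the lemma is then an application of these identities together with the naturality and functoriality of pushforwards from Proposition~\ref{pro:properties-pushforward}.

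For (i), I would observe that $\pi^I_{I'} \colon \Mgn{I} \to \Mgn{I'}$ is a fiber bundle, hence a submersion, and that $\Mgn{I/S} = (\pi^I_{I'})^{-1}(\Mgn{I'/S})$ because the constancy condition on $S \subseteq I'$ involves only the coordinates indexed by $I'$. Thus $\pi^I_{I'}$ is transverse to $\Mgn{I'/S}$, and naturality of the Thom class under transverse pullback gives $(\pi^I_{I'})^* \intclass{S} = \intclass{S}$. For (ii), when $S \cap S' = \{p\}$ the diagonals $\Delta_S$ and $\Delta_{S'}$ meet transversally: their normal directions involve the disjoint coordinate blocks $S \setminus \{p\}$ and $S' \setminus \{p\}$, so the codimensions add and $\Delta_S \cap \Delta_{S'} = \Delta_{S \cup S'}$. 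Multiplicativity of Thom classes under transverse intersection then yields $\intclass{S}\intclass{S'} = \intclass{S \cup S'}$.

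For (iv), writing $i = i_{\{1,2\}}$, the push--pull formula gives $\intclass{1,2} \cup \gpsi{c}{j} = i_{!}\!\left( i^*\gpsi{c}{j} \right)$. Since $\pi^{\{1,2\}}_{\{1\}} \circ i$ and $\pi^{\{1,2\}}_{\{2\}} \circ i$ are both the identity of $\Mgn{1}$, and $\gpsi{c}{j}$ is pulled back along $\pi^{\{1,2\}}_{\{j\}}$, the restrictions coincide: $i^* \gpsi{c}{1} = i^*\gpsi{c}{2}$, and applying $i_{!}$ proves (iv). Part (iii) is the self-intersection formula: $\intclass{1,2}^2 = i_{!}\!\left( i^* \intclass{1,2} \right) = i_{!}\!\left( e(N) \right)$, where $N$ is the normal bundle of $i$; identifying $N$ with the vertical tangent bundle restricted to the diagonal gives $e(N) = i^*\gpsi{e}{1}$, whence $\intclass{1,2}^2 = i_{!}(i^*\gpsi{e}{1}) = \intclass{1,2}\cup \gpsi{e}{1}$. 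Finally, for (v) I would use that $i$ is a section of $\pi := \pi^{\{1,2\}}_{\{1\}}$: pulling back along a point $* \hookrightarrow \Mgn{1}$, which is legitimate since $\Mgn{1}$ is connected, and invoking naturality of $\pi_!$, the computation reduces to the fiber $M_g$, where $\intclass{1,2}$ restricts to the Poincar\'e dual of a single point, i.e.\ the generator of $H^{2d}(M_g)$, whose pushforward is $1$ (as in the proof of Fact~\ref{fact:pushforward-euler}).

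The step I expect to be the main obstacle is (iii): one must identify the normal bundle of the fiberwise diagonal $\Mgn{1} \hookrightarrow \Mgn{2}$ with the pullback of the vertical tangent bundle in a $\Diff M_g$-equivariant fashion, so that its Euler class is exactly $\gpsi{e}{1}$, and verify that the orientation conventions for the $2d$-dimensional (odd $d$) normal bundles make the signs consistent. A secondary technical point pervading all five parts is justifying that the standard Thom-class identities descend from the equivariant setting on $\Maps(I, M_g)$ to the homotopy quotient $\Mgn{I}$; this is precisely the content established in~\cite[Lemma 2.1]{oscar12-rel-revisited} for $d=1$ and needs to be transported to general odd $d$.
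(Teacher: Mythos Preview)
Your proposal is correct and follows essentially the same approach as the paper's own treatment. The paper does not give a detailed proof; it simply refers to the standard Thom-class arguments in \cite[\S 11]{milnor74-characteristic} and \cite[Lemma~2.1]{oscar12-rel-revisited}, and notes that part (v) is proved just as Fact~\ref{fact:pushforward-euler}, which is exactly the outline you have written out.
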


The proof of this lemma is similar to the arguments in~\cite[\S 11]{milnor74-characteristic}, see also~\cite[Lemma~2.1]{oscar12-rel-revisited}. The proof of part (v) is very similar to the proof of Lemma~\ref{lem:pushforward-euler}.

Our next goal is to be able to compute the pushforward of any tautological class in $H^*(\Mgn{I})$ via the projection maps $\pi^I_J$. We will use the properties of the pushforward described in Section~\ref{sec:further-properties-pushforwards}. 

Lemma~\ref{lem:int-class-properties} and the naturality of the pushforward imply the following.
\begin{lemma}\label{lem:pushforward-int-class}
  For any finite set $I$, we have
  \[ \left(\pi^{I \sqcup \{\mpoint\}}_I\right)_!(\intclass{i\mpoint}) = 1 \text{ and } \left(\pi^{I \sqcup \{\mpoint\}}_I\right)_! (\gpsi{c}{\mpoint}) = \gkappa{c} \]
for all $i \in I$ and $c \in H^*(\BSO{2d})$. We use the convention $\gkappa{e} = \chi = 2-2g$.
\end{lemma}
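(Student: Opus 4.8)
The plan is to deduce both identities from the naturality of the pushforward map (Proposition~\ref{pro:properties-pushforward}(2)), reducing each to a base case that is already available in the excerpt: Lemma~\ref{lem:int-class-properties}(v) for the intersection class, and the very definition of the MMM classes for the $\gpsi{c}{\mpoint}$-classes. The uniform mechanism is that the forgetful bundle $\pi^{I\sqcup\{\mpoint\}}_I$ is a pullback of a simpler forgetful bundle, so that its pushforward of a pulled-back class is the pullback of the pushforward computed downstairs.

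For the intersection class, I first factor the total forgetful map as $\pi^I_\emptyset = \pi^{\{i\}}_\emptyset \circ \pi^I_{\{i\}}$. Combined with the identification $\Mgn{I\sqcup\{\mpoint\}} \cong (\pi^I_\emptyset)^*(\overline{\Mg})$ recorded above, this refines to an identification $\Mgn{I\sqcup\{\mpoint\}} \cong (\pi^I_{\{i\}})^*\Mgn{\{i,\mpoint\}}$, so that $\pi^{I\sqcup\{\mpoint\}}_I$ is the pullback of the one-point forgetful bundle $\pi^{\{i,\mpoint\}}_{\{i\}}\colon \Mgn{\{i,\mpoint\}}\to\Mgn{\{i\}}$ along $\pi^I_{\{i\}}$, with $\pi^{I\sqcup\{\mpoint\}}_{\{i,\mpoint\}}$ the induced map on total spaces. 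By Lemma~\ref{lem:int-class-properties}(i), the class $\intclass{i\mpoint}\in H^*(\Mgn{I\sqcup\{\mpoint\}})$ is the pullback of $\intclass{i\mpoint}\in H^*(\Mgn{\{i,\mpoint\}})$ along $\pi^{I\sqcup\{\mpoint\}}_{\{i,\mpoint\}}$. Naturality of the pushforward for this pullback square then yields $(\pi^{I\sqcup\{\mpoint\}}_I)_!(\intclass{i\mpoint}) = (\pi^I_{\{i\}})^*\bigl((\pi^{\{i,\mpoint\}}_{\{i\}})_!(\intclass{i\mpoint})\bigr)$. The inner pushforward equals $1$ by Lemma~\ref{lem:int-class-properties}(v) (after relabeling $1\mapsto i$, $2\mapsto\mpoint$), and $(\pi^I_{\{i\}})^*(1) = 1$, which gives the first identity.

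For the $\gpsi{c}{\mpoint}$-class the same identification $\Mgn{I\sqcup\{\mpoint\}}\cong(\pi^I_\emptyset)^*(\overline{\Mg})$ exhibits $\pi^{I\sqcup\{\mpoint\}}_I$ directly as the pullback of the universal bundle $\pi^{\{\mpoint\}}_\emptyset\colon \Mgn{\{\mpoint\}}\cong\overline{\Mg}\to\Mg$ along $\pi^I_\emptyset$, with induced map $\pi^{I\sqcup\{\mpoint\}}_{\{\mpoint\}}$ on total spaces. Letting $\gamma\colon\overline{\Mg}\to\BSO{2d}$ classify the vertical tangent bundle, one has $\gpsi{c}{\mpoint} = (\pi^{I\sqcup\{\mpoint\}}_{\{\mpoint\}})^*\gamma^*(c)$ by definition of $\gpsi{c}{\mpoint}$, while $(\pi^{\{\mpoint\}}_\emptyset)_!(\gamma^*(c)) = \gkappa{c}\in H^*(\Mg)$ by the very definition of the MMM class. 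Naturality of the pushforward then gives $(\pi^{I\sqcup\{\mpoint\}}_I)_!(\gpsi{c}{\mpoint}) = (\pi^I_\emptyset)^*(\gkappa{c})$, which is precisely the class $\gkappa{c}\in H^*(\Mgn{I})$ of Definition~\ref{def:taut-ring-marked-points}. Specializing to $c = e$ and applying Fact~\ref{fact:pushforward-euler} identifies $\gkappa{e}$ with $\chi(M_g) = 2-2g$, recording the stated convention.

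The only content beyond citing these base cases is checking that the two squares really are pullbacks of oriented manifold bundles, so that Proposition~\ref{pro:properties-pushforward}(2) genuinely applies. I expect this bookkeeping — keeping the forgetful-map indices consistent and using the factorization $\pi^I_\emptyset = \pi^{\{i\}}_\emptyset\circ\pi^I_{\{i\}}$ to see each forgetful bundle as a pullback of the universal bundle — to be the main, though entirely routine, obstacle; everything else is formal.
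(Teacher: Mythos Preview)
Your argument is correct and follows exactly the route the paper indicates: the paper simply asserts that the lemma follows from Lemma~\ref{lem:int-class-properties} and the naturality of the pushforward, and you have filled in precisely those details. The two pullback squares you describe, together with Lemma~\ref{lem:int-class-properties}(i),(v) and the definition of the MMM classes, are exactly what is needed.
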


Furthermore, it is possible to rewrite a tautological class in $H^*(\Mgn{I \sqcup \{\mpoint\}})$ in terms of a tautological classes in $H^*(\Mgn{I})$ as follows:

\begin{lemma}\label{lem:simplify-monomial-taut}
    We can simplify any monomial in the fundamental tautological classes $m \in H^*\left(\Mgn{I \sqcup \{\mpoint\}}\right)$ in one of the following ways:

    \begin{itemize}
        \item If the monomial contains $\intclass{i, \mpoint}$ for some $i \in I$, then it can be rewritten as $m = \intclass{i,\mpoint} \cdot n'$ where $n'$ is a monomial in classes that do not involve the marked point '$\mpoint$'. That is, $n' = \left(\pi^{I \sqcup \{\mpoint\}}_I\right)^*(n)$ where $n$ is a monomial in tautological classes of $\Mgn{I}$. 
        \item Otherwise,  the monomial can be rewritten as $m = \gpsi{c}{\mpoint} \cdot n'$ where $c$ is a  product (possibly empty) of characteristic classes of the vertical tangent bundle and $n'$ is as before. 
    \end{itemize}
\end{lemma}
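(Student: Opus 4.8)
The plan is to track which fundamental tautological classes appearing in a monomial $m \in H^*(\Mgn{I \sqcup \{\mpoint\}})$ involve the marked point $\mpoint$, and then to systematically convert all of them but (at most) one into $\mpoint$-\emph{free} classes. Here I call a class $\mpoint$-free if it is pulled back from $\Mgn{I}$ along $\pi^{I \sqcup \{\mpoint\}}_I$: the classes $\gkappa{c}$ (pulled back all the way from $\Mg$), the classes $\gpsi{c}{i}$ with $i \in I$ (since the projection $\Mgn{I \sqcup \{\mpoint\}} \to \Mgn{\{i\}}$ factors through $\pi^{I \sqcup \{\mpoint\}}_I$), and the intersection classes $\intclass{S}$ with $S \subset I$ (by Lemma~\ref{lem:int-class-properties}(i)) are all $\mpoint$-free, and any monomial in them is of the form $\left(\pi^{I \sqcup \{\mpoint\}}_I\right)^*(n)$. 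The only fundamental classes that genuinely involve the marked point are the single-point class $\gpsi{c}{\mpoint}$ and the intersection classes $\intclass{S}$ with $\mpoint \in S$.

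First I would normalize $m$. Using $\gpsi{(cd)}{\mpoint} = \gpsi{c}{\mpoint}\gpsi{d}{\mpoint}$ from Definition~\ref{def:taut-ring-marked-points}, all $\psi$-factors based at $\mpoint$ collapse into one factor $\gpsi{c}{\mpoint}$. Next, for every intersection class $\intclass{S}$ with $\mpoint \in S$, I would choose some $i \in S \setminus \{\mpoint\}$ and apply Lemma~\ref{lem:int-class-properties}(ii) to the decomposition $S = \{i,\mpoint\} \cup \left(S \setminus \{\mpoint\}\right)$, whose two pieces meet only in $i$; this gives $\intclass{S} = \intclass{i,\mpoint} \cdot \intclass{S \setminus \{\mpoint\}}$ with the second factor $\mpoint$-free. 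After these steps the only $\mpoint$-involving factors left in $m$ are the one class $\gpsi{c}{\mpoint}$ and some number of factors $\intclass{i,\mpoint}$ with $i \in I$.

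The argument then splits into the two cases of the statement. If no factor $\intclass{i,\mpoint}$ survives, then $m = \gpsi{c}{\mpoint} \cdot n'$ with $n'$ being $\mpoint$-free (and $c$ possibly the trivial class), which is exactly the second alternative. If at least one factor $\intclass{i,\mpoint}$ is present, I would fix one such $i$ and absorb everything else into $\mpoint$-free classes: a power $\intclass{i,\mpoint}^{a}$ reduces to $\intclass{i,\mpoint} \cdot \gpsi{e}{i}^{a-1}$ by iterating Lemma~\ref{lem:int-class-properties}(iii); each remaining factor $\intclass{j,\mpoint}$ becomes $\mpoint$-free through $\intclass{i,\mpoint}\intclass{j,\mpoint} = \intclass{i,\mpoint}\intclass{i,j}$ (Lemma~\ref{lem:int-class-properties}(ii)); and the factor $\gpsi{c}{\mpoint}$ becomes $\gpsi{c}{i}$ through $\intclass{i,\mpoint}\gpsi{c}{\mpoint} = \intclass{i,\mpoint}\gpsi{c}{i}$ (Lemma~\ref{lem:int-class-properties}(iv)). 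What remains is $m = \intclass{i,\mpoint} \cdot n'$ with $n'$ a monomial in $\mpoint$-free classes, the first alternative.

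The main obstacle I anticipate is bookkeeping rather than anything conceptual: relations (iii) and (iv) of Lemma~\ref{lem:int-class-properties} are stated only in $\Mgn{2}$, so one must justify applying them inside $\Mgn{I \sqcup \{\mpoint\}}$. This is done by restricting attention to the two coordinates $\{i,\mpoint\}$, pulling the $\Mgn{2}$-relations back along the forgetful map $\Mgn{I \sqcup \{\mpoint\}} \to \Mgn{\{i,\mpoint\}}$, and invoking the naturality in Lemma~\ref{lem:int-class-properties}(i) together with the naturality of the $\psi$-classes; I would make this pullback explicit so that each local relation is seen to hold in the ambient space. A secondary point to verify is that all of the $\mpoint$-free classes produced along the way — namely $\intclass{i,j}$, $\gpsi{e}{i}$, $\gpsi{c}{i}$, and the original $\mpoint$-free factors — are genuinely pullbacks from $\Mgn{I}$, so that the collected remainder $n'$ really has the asserted form $\left(\pi^{I \sqcup \{\mpoint\}}_I\right)^*(n)$.
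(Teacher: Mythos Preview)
Your proposal is correct and follows essentially the same approach as the paper: reorder the factors of $m$ and, if some $\intclass{i,\mpoint}$ is present, use the relations of Lemma~\ref{lem:int-class-properties} to replace every other $\mpoint$-involving factor by a $\mpoint$-free one. Your write-up is simply more thorough---you explicitly decompose a general $\intclass{S}$ with $\mpoint\in S$ via (ii), reduce powers $\intclass{i,\mpoint}^a$ via (iii), and justify pulling the $\Mgn{2}$-relations back to $\Mgn{I\sqcup\{\mpoint\}}$---whereas the paper's proof compresses all of this into two sentences.
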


\begin{proof}
    If $m$ does not contain any $\intclass{i, \mpoint}$s, reordering its terms will put it in the required form. Otherwise, we use the relations $\intclass{i,\mpoint}\intclass{j,\mpoint} = \intclass{i,\mpoint}\intclass{i,j}$ and $\intclass{i,\mpoint}\gpsi{c}{\mpoint} = \intclass{i, \mpoint}\gpsi{c}{i}$ from Lemma~\ref{lem:int-class-properties} to get rid of any classes that involve '$\mpoint$' except for the single $\intclass{i,\mpoint}$.
\end{proof}

The push-pull formula and the above lemmas give us the following procedure to compute the pushforward of a general tautological class:

\begin{procedure}\label{proced:rules-for-integration}
    The result of applying the pushforward map \[\left(\pi^{I \sqcup \{\mpoint\}}_I\right)_!\co H^*(\Mgn{I \sqcup \{\mpoint\}}) \to H^*(\Mgn{I})\] to a tautological class can be computed as follows, one monomial at a time. First, simplify the monomial $m\in H^*(\Mgn{I \sqcup \{\mpoint\}})$ using Lemma~\ref{lem:simplify-monomial-taut}.  Then, apply the push-pull formula and Lemma~\ref{lem:pushforward-int-class} to get:
 \begin{align*}
     \text{If } &m = \intclass{i,\mpoint} \cdot \left(\pi^{I \sqcup \{\mpoint\}}_I\right)^*(n), &&\text{we have } \left(\pi^{I \sqcup \{\mpoint\}}_I\right)_!(m) = \left(\pi^{I \sqcup \{\mpoint\}}_I\right)_!(\intclass{i\mpoint})\cdot n =n. \\ 
     \text{If } &m = \gpsi{c}{\mpoint} \cdot \left(\pi^{I \sqcup \{\mpoint\}}_I\right)^*(n), &&\text{we have } \left(\pi^{I \sqcup \{\mpoint\}}_I\right)_!(m) = \left(\pi^{I \sqcup \{\mpoint\}}_I\right)_!(\gpsi{c}{\mpoint})\cdot n = \gkappa{c} \cdot n.  
 \end{align*}
 
 Note that in the second case above, if we have $\gpsi{c}{\mpoint}=1$, then the pushforward will be zero.
\end{procedure}

\begin{example}
We can compute a pushforward as follows.
\[\left(\pi^{\{i, j, \mpoint\}}_{\{i,j\}}\right)_! \left(\intclass{i,\mpoint}^3\intclass{j,\mpoint}^2\gpsi{d}{\mpoint}\gkappa{e}\right) = \left(\pi^{\{i, j, \mpoint\}}_{\{i,j\}}\right)_!\left(\intclass{i,\mpoint}\gpsi{e}{i}^2\intclass{i,j}^2\gpsi{d}{i}\gkappa{e}\right) = \gpsi{e}{i}^2\intclass{i,j}^2\gpsi{d}{i}\gkappa{e}.\]
\end{example}

Since pushforward maps are functorial, we can apply Procedure~\ref{proced:rules-for-integration} several times to calculate $\left(\pi^I_J\right)_!$ for any $J \subset I$. 
There also exist formulas for calculating $\left(\pi^I_{\emptyset}\right)_!$ of a tautological monomial in $H^*(\Mgn{I})$ in one step. See~\cite[Section 2.7]{oscar12-rel-revisited} for details.

\sectwo{Randal-Williams' method and proof of Theorem~\ref{thm:finite-generation}}\label{sec:proof-finite-generation}
We can obtain numerous relations in the cohomology of $\Mg$ by applying the following idea of~\cite{oscar12-rel-revisited}. 

\begin{procedure}
First, we construct some tautological class $c \in \taut^*\left( \Mgn{I \sqcup \{\mpoint\}} \right)$ such that $\left(\pi^{I \sqcup \{\mpoint\}}_I\right)_!(c) = 0$. Applying Theorem~\ref{thm:fund-relations} to one or two such classes will tell us that some polynomial in the ring $\taut^*\left( \Mgn{I} \right)$ is equal to zero. We may multiply this relation by any other polynomial and apply $\left(\pi^I_\emptyset\right)_!$ to the result to get a relation among the tautological classes of $\Mg$. 
\end{procedure}

We can obtain more relations than were obtained in~\cite{oscar12-rel-revisited} because the version of our Theorem~\ref{thm:fund-relations} that~\cite{oscar12-rel-revisited} used (from~\cite{morita89-families1})   only applies when the cohomological degree of $c$ is 2 and does not allow using two cohomology classes at once.

\begin{example}
  We illustrate this procedure by repeating the following example from~\cite[Section 2.2]{oscar12-rel-revisited} with our notation. Consider the bundle $\pi\co \Mgn{\{1, \mpoint\}} \to \Mgn{1}$ (which has fiber $M_g$). The following class pushes forward to 0:
  \[ \chi \intclass{1\mpoint} - \gpsi{e}{\mpoint} \in H^*(\Mgn{\{1, \mpoint\}}). \]

Theorem~\ref{thm:fund-relations} applies to give us the following relation in the ring $\taut^*(\Mgn{1})$, which we then simplify using Procedure~\ref{proced:rules-for-integration} and related lemmas.
\begin{multline}
  0 = \left(\pi_!\left((\chi \intclass{1\mpoint} - \gpsi{e}{\mpoint})^2 \right)\right)^{g+1} = 
        \left( \pi_! \left( \chi^2\intclass{1\mpoint}\gpsi{e}{1}- 2\chi \intclass{1\mpoint}\gpsi{e}{1}+\gpsi{e}{\mpoint}^2  \right) \right)^{g+1} = \\
     = \left((\chi-2)\chi \gpsi{e}{1} + \gkappa{e^2} \right)^{g+1} = 
       \sum_{i=0}^{g+1} \binom{g+1}{i} ((\chi - 2) \chi \gpsi{e}{1})^i(\gkappa{e^2})^{g+1 - i}.
 \end{multline}

Let us now assume that $\chi = 2-2g < 0$. 
For each integer $k$, we can multiply both sides of the formula by $\frac{\gpsi{e}{1}^k}{((\chi - 2)\chi)^{g+1}}$ and apply $(\pi^{\{1\}}_\emptyset)_!$ to both sides to get the following relation in the cohomology of $\Mg$.
\begin{equation}
    0 = \sum_{i=0}^{g+1} \binom{g+1}{i} \gkappa{e^{i+k}} \left(\frac{\gkappa{e^2}}{(\chi-2)\chi}\right)^{g+1 - i} \in H^{2d(g+k)}(\Mg). 
\end{equation}
(where we should keep in mind that $\gkappa{e^0} = 0$ and $\gkappa{e^1} = \chi$).

\end{example}

\begin{corollary}\label{cor:kappa-is-decompose}
  From the above example, we can see that for  $k \geq 0$, the degree $2d(g+k)$ class $\kappa_{g+k} = \gkappa{e^{k+g+1}}$ can be written as a polynomial in lower kappa classes.
\end{corollary}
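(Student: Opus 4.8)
The plan is to read off the corollary directly from the relation displayed at the end of the Example by isolating its top-index summand. Recall the convention $\kappa_i = \gkappa{e^{i+1}}$, so that $\gkappa{e^{m}} = \kappa_{m-1}$ whenever $m \geq 2$; in particular the $i = g+1$ summand of that relation is $\gkappa{e^{g+1+k}} = \kappa_{g+k}$, appearing with binomial coefficient $\binom{g+1}{g+1}=1$ and with the exponent $g+1-i$ equal to $0$, so that the accompanying power of $\gkappa{e^2}/((\chi-2)\chi)$ is the empty product. Thus $\kappa_{g+k}$ occurs in the relation with coefficient exactly $1$, and I would simply solve for it:
\[
  \kappa_{g+k} = -\sum_{i=0}^{g} \binom{g+1}{i}\, \gkappa{e^{i+k}} \left( \frac{\kappa_1}{(\chi-2)\chi} \right)^{g+1-i}.
\]

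The remaining step is to check that every term on the right is a polynomial in kappa classes of strictly smaller index. First, the denominator $(\chi-2)\chi$ is a nonzero rational number under the standing assumption $\chi = 2-2g < 0$ of this subsection, so dividing by it is harmless over $\QQ$. Second, since each summand has $i \leq g$, the exponent $g+1-i \geq 1$ is positive, so every term carries at least one factor of $\kappa_1 = \gkappa{e^2}$; because $1 < g+k$, this factor is itself a lower kappa class. Finally I would dispose of the factor $\gkappa{e^{i+k}}$ by cases on $i+k$: the degenerate value $i+k=0$ (which forces $i=k=0$) contributes nothing, since $\gkappa{e^0}=0$; the value $i+k=1$ gives the constant $\gkappa{e}=\chi$; and for $i+k \geq 2$ we have $\gkappa{e^{i+k}} = \kappa_{i+k-1}$ with $i+k-1 \leq g+k-1 < g+k$, again a lower kappa class. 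In every case the summand is a polynomial in the classes $\kappa_j$ with $j < g+k$, which is exactly the assertion.

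I do not expect a genuine obstacle here; the content is entirely bookkeeping off the Example's relation. The only points requiring care are confirming that the leading summand $i=g+1$ has coefficient exactly $1$ — so that $\kappa_{g+k}$ can be isolated without reintroducing a power of itself — and verifying that the two degenerate summands $i+k \in \{0,1\}$ do not secretly involve $\kappa_{g+k}$ or any higher class, which they cannot since their indices are bounded above by $g+k-1$.
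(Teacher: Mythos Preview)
Your argument is correct and is exactly the intended one: the paper states the corollary without proof, simply as a direct reading of the displayed relation from the example, and you have carried out precisely that reading by isolating the $i=g+1$ summand and checking that all remaining summands involve only $\kappa_j$ with $j<g+k$. There is nothing to add.
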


\begin{example} \label{exple:fin-gen-relation}
    Assume that $\chi \neq 0$ and fix any $p \in H^{2i}(\BSO{2d})$. We obtain a relation in the cohomology of $\Mgn{1}$ by applying the second part of Theorem~\ref{thm:fund-relations} to the classes $a= \intclass{1\mpoint} -\gpsi{e}{\mpoint}/\chi \in H^{2d}(\Mgn{\{1,\mpoint\}})$ and $b = \gpsi{p}{\mpoint} - (\gpsi{e}{\mpoint} / \chi) \gkappa{p} \in H^{2i}(\Mgn{\{1,\mpoint\}}$ (both classes push down to zero in $\Mgn{1}$). The Theorem gives us the following formula.
\begin{multline} \label{eq:relation-fin-gen}
 0 = \left( \vphantom{\sum^{1}}  \left(\pi^{\{1,\mpoint\}}_{\{1\}} \right)_! \left( \vphantom{\sum} \left(\gpsi{p}{\mpoint}- (\gpsi{e}{\mpoint} / \chi) \gkappa{p}\right)\left(\intclass{1\mpoint} - (\gpsi{e}{\mpoint} / \chi ) \right) \right) \right) ^{2g+1} = \\
= \left( \gpsi{p}{1} - \frac{\gkappa{ep}}{\chi} - \frac{\gpsi{e}{1} \gkappa{p}}{\chi} + \frac{\gkappa{e^2} \gkappa{p}}{\chi^2} \right)^{2g+1} \in H^*\left(\vphantom{\sum} \Mgn{1}\right).
\end{multline}
\end{example}

We will use the above example to prove Theorem~\ref{thm:finite-generation}. First, we need the following corollary.

Let $\cA \subset \taut^*(\Mg)$ be the \defterm{augmentation ideal} generated by all the elements of the tautological subring that have a non-zero cohomological degree, and let $\cD = \cA \cdot \cA$ be the ideal of the decomposable elements. 

\begin{lemma}\label{lem:finite-generation-ideals}
  Assume $g>1$. There is an integer $N > 0$ that depends only on $g$ and $d$ such that for all $p,q \in H^*(\BSO{2d})$ with $\deg p > 0$,
  \[ \gkappa{\left(p^Nq\right)} \in \cD \subset \taut^*(\Mg). \]
\end{lemma}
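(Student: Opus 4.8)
The plan is to realize $\gkappa{p^N q}$ as the top term of a pushforward of the relation in Example~\ref{exple:fin-gen-relation}, and to show every remaining term is decomposable by a crude degree count. Since $H^*(\BSO{2d};\QQ)$ is concentrated in even degrees and spanned by homogeneous monomials, I may assume $p$ and $q$ are homogeneous. I will take $N = 2g+1+d$, which depends only on $g$ and $d$, and treat the generic case $\deg p \neq 2d$ and the degenerate case $\deg p = 2d$ separately.

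For the generic case, write $v = \gpsi{p}{1}$, $u = \gpsi{e}{1}$, and set $s = \gkappa{p}/\chi$ and $t = \gkappa{ep}/\chi - \gkappa{e^2}\gkappa{p}/\chi^2$ (both pulled back from $\Mg$, and well defined since $\chi = 2-2g \neq 0$ because $g>1$). Then relation~\eqref{eq:relation-fin-gen} reads $(v - su - t)^{2g+1} = 0$ in $\taut^*(\Mgn{1})$. I would multiply this identity by $\gpsi{p^{N-2g-1}q}{1} = v^{\,N-2g-1}\gpsi{q}{1}$ and apply $(\pi^{\{1\}}_\emptyset)_!$, using Procedure~\ref{proced:rules-for-integration}, the push–pull formula, and $v^a u^b = \gpsi{p^a e^b}{1}$. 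The multinomial term indexed by $a+b+c = 2g+1$ then pushes forward to $\frac{(2g+1)!}{a!\,b!\,c!}(-s)^b(-t)^c\,\gkappa{p^{\,a+N-2g-1}e^{b}q}$, and the unique $a = 2g+1$ term is $\gkappa{p^N q}$ with coefficient $1$; thus $\gkappa{p^N q}$ equals, up to sign, the sum of the remaining terms.

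Next I would verify that each remaining term lies in $\cD = \cA \cdot \cA$. Every such term has $b+c \geq 1$ and so carries a factor $(-s)^b(-t)^c$: when $\deg p > 2d$ both $s$ and $t$ have positive degree, while when $0 < \deg p < 2d$ one has $\gkappa{p}=0$, so $s=0$ but $t$ still has positive degree $\deg p$; in either case $(-s)^b(-t)^c \in \cA$. Its companion class $\gkappa{p^{\,a+N-2g-1}e^{b}q}$ has $p$-exponent at least $N-2g-1 = d$, hence degree at least $d\cdot\deg p - 2d \geq 4d - 2d > 0$ (using $\deg p \geq 4$, which holds whenever $\deg p \neq 2d$), so it too lies in $\cA$. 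Each remaining term is therefore a product of two augmentation-ideal classes and $\gkappa{p^N q}\in\cD$. For the degenerate case $\deg p = 2d$, note that the only monomial in $p_1,\dots,p_{d-1},e$ of degree $2d$ is $e$ (Pontryagin monomials have degree divisible by $4$, whereas $2d\equiv 2\pmod 4$ as $d$ is odd), so homogeneity forces $p \in \QQ\cdot e$ and relation~\eqref{eq:relation-fin-gen} degenerates to $0=0$. Here I would instead multiply the identity $((\chi-2)\chi\,\gpsi{e}{1} + \gkappa{e^2})^{g+1} = 0$ established in the example preceding Corollary~\ref{cor:kappa-is-decompose} by $\gpsi{q}{1}$ and push forward, expressing $\gkappa{e^{g+1}q}$ — and hence $\gkappa{p^N q}$, since $N \geq g+1$ — as a sum of terms each divisible by a positive power of $\gkappa{e^2}\in\cA$ times a positive-degree class, so again landing in $\cD$.

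The main obstacle is exactly the bookkeeping in the third step: producing a single $N$, depending only on $g$ and $d$, that simultaneously isolates the leading term $\gkappa{p^N q}$ and forces every correction term's surviving kappa class into the augmentation ideal $\cA$, uniformly in $p$ and $q$. The delicate point is that a correction term can fail to be decomposable precisely when its kappa class collapses to degree $0$ (a characteristic number); taking $N - (2g+1) \geq d$ together with $\deg p \geq 4$ rules this out, and separating off the family $p \in \QQ\cdot e$ disposes of the one case where the relation~\eqref{eq:relation-fin-gen} carries no information.
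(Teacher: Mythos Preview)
Your argument is correct and rests on the same relation~\eqref{eq:relation-fin-gen} as the paper's proof, but the organization differs. The paper first replaces $p$ by $p^{2d+1}$ whenever $\deg p \leq 2d$, thereby reducing uniformly to the single case $\deg p > 2d$; it then works with auxiliary ideals $\cA',\cB',\cD' \subset \taut^*(\Mgn{1})$, shows $\gpsi{p}{1}^{2g+1}\in\cD'$ directly from~\eqref{eq:relation-fin-gen}, and observes that the pushforward carries $\cD'$ into $\cD$. Your route avoids the auxiliary ideals in favour of an explicit multinomial expansion and a direct degree count, at the price of splitting off the case $\deg p = 2d$ (where~\eqref{eq:relation-fin-gen} degenerates to $0=0$) and invoking the separate relation preceding Corollary~\ref{cor:kappa-is-decompose}. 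This buys you a sharper constant, $N = 2g+1+d$ versus the paper's $N = (2d+1)(2g+1)$, though neither bound is used quantitatively downstream.

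One small imprecision: in your degenerate case, the claim that every correction term is ``a positive power of $\gkappa{e^2}$ times a positive-degree class'' is not literally true for the $i=0$ and $i=1$ terms when $\deg q$ is small, since then $\gkappa{e^i q}$ can have degree $\leq 0$. The conclusion survives because in those cases the exponent of $\gkappa{e^2}$ is at least $g \geq 2$, so that factor already lies in $\cD = \cA\cdot\cA$ on its own; you should say this explicitly.
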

\begin{proof}
We will use $N = (2d+1)(2g+1)$. If $1 \leq \deg p < 2d$, we replace $p$ with $p^{2d+1}$. This allows us to assume that $\deg p > 2d$.

  Let $\cA', \cB', \cD' \subset \taut^*(\Mgn{1})$ be the following ideals.
 \begin{equation*}
\cA'  = \left(\gkappa{t} \mid  t \in H^{>2d}(\BSO{2d})\right), \ \  
\cB'  = \left(\gpsi{t}{1} \mid  t \in H^{>2d}(\BSO{2d})\right), \ \ 
\cD'  = \cA' \cdot (\cA' + \mathcal{B}'). 
\end{equation*}

We observe that:
  \begin{enumerate}
      \item $\gpsi{p}{1}^{2g+1} \in \cD'$. To see this, note that since $\deg(p) > 2d$, $\gpsi{e}{1}\gkappa{p}$ and $\gkappa{e^2}\gkappa{p}$ are in $\cD'$. Using our assumption that $g>1$, the formula~\eqref{eq:relation-fin-gen} implies that $\gpsi{p}{1}^{2g+1} \in \cD'$ as well.

  \item The pushforward operation $\left(\pi^{\{1\}}_\emptyset\right)_!$ takes $\cD' \subset \Mgn{1}$ into $\cD \subset \Mg$.
  \end{enumerate}
  
   It follows that $\gpsi{p}{1}^{2g+1}\gpsi{q}{1} = \gpsi{(p^{2g+1}q)}{1} \in \cD'$ for all $q \in H^*(\BSO{2d})$ and, therefore, $\gkappa{(p^{2g+1}q)} \in \cD$. 
\end{proof}

Now, we can finally prove that the tautological ring is finitely generated.
\label{sec:sec-where-fin-gen-proved}
\begin{proof}[Proof of Theorem~\ref{thm:finite-generation}]
    The infinitely many elements $\gkappa{\left(e^{a_0}\prod_{i=1}^{d} p_i^{a_i}\right)}$ (where $a_i$-s are non-negative integers and $p_i$-s are the Pontryagin classes) generate the tautological ring rationally. By the previous lemma, there is a constant $N$ such that the elements where any of the $a_i$-s are greater than $N$ are decomposable. In other words, any such generator is expressible as a polynomial in kappa classes of lower cohomological degree.

  So, the finitely many generators of cohomological degree less than $\deg\left( \gkappa{\left(e^{N}\prod_{i=1}^{d} p_i^{N}\right)} \right)$ generate the whole tautological subring of $H^*\left(\BDiff M_g; \QQ \right)$.
\end{proof}

\sectwo{Randal-Williams' calculations and high-dimensional manifolds}\label{sec:all-oscar-relations}

\newcommand{\IRW}{\mathcal{I}_g^\text{RW}}

Randal-Williams obtained numerous examples\footnote{These include all the relations that exist for $d=1$, $g\leq 5$ in degrees $*\leq 2(g-2)$. In higher degrees, the tautological ring vanishes completely according to~\cite{looijenga95}.} of relations in $d=1$ case for $g=3,4,5,6,9$ in~\cite[Section 2]{oscar12-rel-revisited} using computer calculations.  He also produced a more explicit family of relations in every genus in \cite[Section 2.7]{oscar12-rel-revisited}. 

Formally, all the equations and examples from~\cite{oscar12-rel-revisited} can be interpreted as generators for some ideal $\IRW\subset \QQ[\kappa_1, \kappa_2, \ldots]$. In this language, the result of~\cite{oscar12-rel-revisited} is that the ideal $\IRW$ is in the kernel of the map $\QQ[\kappa_1, \kappa_2, \ldots] \to H^*\left(\BDiff M^2_g\right)$ in the $d=1$ case.  We will show the following.
\begin{proposition}\label{pro:all-oscar-relations}
    For all odd $d$, the same ideal $\IRW$ is in the kernel of the corresponding map $\QQ[\kappa_1, \kappa_2, \ldots] \to H^*\left(\BDiff M^{2d}_g\right)$.
\end{proposition}
As we mentioned in the introduction, this is surprising since the cohomological degree of $\kappa_i = \gkappa{e^{i+1}} \in H^{2di}\left(\BDiff M^{2d}_g\right)$ depends on $d$. 

\begin{example}[{\cite[Example 2.5]{oscar12-rel-revisited} and Proposition~\ref{pro:all-oscar-relations}}]\label{exple:oscar-g4-relations}
    For all odd values of $d$ and $g=4$, we have the following relations in $H^*(\BDiff M^{2d}_4)$.
\begin{equation*}3\kappa_1^2 = -32\kappa_2 \in H^{4d}(\BDiff M^{2d}_4) \text{ and } \kappa_2^2=\kappa_1\kappa_2=\kappa_3=0 \in H^{6d}(\BDiff M^{2d}_4).\end{equation*}
\end{example}

For more examples of relations,  see~\cite[Examples~2.3-2.7]{oscar12-rel-revisited}.

\begin{proof}[Proof of Proposition~\ref{pro:all-oscar-relations}] 
First, we repeat the key steps of~\cite{oscar12-rel-revisited} in our level of generality.

\begin{enumerate}
\item 
    Let $M^{2d}_g \to E \overset{\pi}{\to} B$ be a manifold bundle. Let $c \in H^{2d}(E)$ and $q = \pi_!(c) \in H^0(B) \cong \ZZ$. The relation~\eqref{eq:fund-relation-square} from Theorem~\ref{thm:fund-relations} applied to the cohomology class $\frac{\chi \cdot c - q \cdot e}{\gcd (\chi, q)}$ implies that the cohomology class
    \begin{equation}\label{eq:oscar-thmA}
         \Omega(E,c):=\frac{1}{(\gcd (\chi, q))^2}\left(\chi^2 \pi_!(c^2) - 2q\chi \pi_!(e \cdot c) + q^2\kappa_1 \right) \in H^{2d}(B)
     \end{equation}
     has the property that $\Omega(E,c)^{g+1}$ is torsion.

     This is precisely the version of \cite[Theorem A]{oscar12-rel-revisited} that is stated on~\cite[top of p. 1775]{oscar12-rel-revisited} for $d=1$ (we use slightly different notation). Note that the only part of the expression~\eqref{eq:oscar-thmA} that depends on $d$ is the cohomological degree. 

 \item 
     Consider the bundle $M_g \to \Mgtot(n) \to \Mgn{n}$, defined as the pullback of the universal bundle $\Mgtot \to \Mg$ to $\Mgn{n}$. Following~\cite{oscar12-rel-revisited}, our next step is to apply~\eqref{eq:oscar-thmA} to a particular class in the cohomology of its total space.
     
     Recall that $\Mgtot(n) \cong \Mgn{\{1, \ldots, n, \mpoint\}}$. Given a vector $A = (A_1, \ldots, A_n) \in \ZZ^n$, consider the class
     \[ c_A := \sum_{i=1}^n A_i \intclass{i\mpoint} \in H^{2d} \left(\Mgtot(n) \right) = H^{2d} \left( \Mgn{\{1, \ldots, n, \mpoint\}}\right). \]

We define the class $\Omega_A := \Omega\left( \Mgtot(n), c_A\right)$ using~\eqref{eq:oscar-thmA}. It will satisfy $ \Omega_A^{g+1} = 0 \in H^{2d(g+1)}(\Mgn{n};\QQ)$. The expression for this class does not depend on $d$ and coincides with the formula~\cite[(2.1)]{oscar12-rel-revisited}.

 \item 
We can now obtain non-trivial examples of relations as follows, repeating the procedure from~\cite[Section~2.4]{oscar12-rel-revisited}. Take the equation $\Omega_A^{g+1}=0$ for some values of $A$ and $n$, and perhaps multiply it by another tautological class that doesn't involve Pontryagin classes. Then, apply the pushforward $\left(\pi^{\{1, \ldots, n\}}_\emptyset\right)_!$ to the result to obtain an element of the kernel of the map $\QQ[\kappa_i \mid i \in \NN] \to H^*(\Mg^d,\QQ)$.  Every relation obtained in~\cite{oscar12-rel-revisited} lies in the ideal $\IRW \subset \QQ[\kappa_i \mid i \in \NN]$ generated by such elements. 
\end{enumerate}

 \newcommand{\polyalgn}[1]{
     \QQ\!\left[ \intclass{ij}, \gpsi{e}{i}, \kappa_l            \left| {\substack{1\leq i < j \leq #1 \\ 1\leq l < \infty}} \right.
     \right]
 }
 \medskip 
 To complete the proof, it remains to show that the ideal $\IRW$ does not depend on the value of $d$.  Any tautological class in $H^*(\Mgn{n}; \QQ)$ that appears in the above construction (and any tautological class that makes sense for $d=1$) is in the image of the polynomial algebra $\polyalgn{n}$. The pushforward maps factor through these polynomial algebras. That is to say, there is a map $\mu$ that makes the following diagram commute.
 \[ \xymatrix{
      \polyalgn{n} \ar[rr] \ar[d]^{\mu}
   && H^*(\Mgn{n}; \QQ) \ar[d]^{\left(\pi^{\{1, \ldots, n\}}_{\{1, \ldots, n-1\}}\right)_!}
   \\ \polyalgn{n-1} \ar[rr]
   && H^*(\Mgn{n-1}; \QQ)
 }\]
 
 This map $\mu$ is determined by Procedure~\ref{proced:rules-for-integration}, and does not depend on the value of $d$ (in fact, only the value of $\kappa_e=\chi=2-2g$ is at all affected by what the fiber of our bundle is).  The expressions for further pushforwards such as $\left(\pi^{\{1, \ldots, n\}}_\emptyset\right)_!(b) \in H^*(\Mg)$ also cannot depend on $d$, since they can be computed by applying Procedure~\ref{proced:rules-for-integration} repeatedly.  It follows that the expressions for the generators of the ideal $\IRW$ do not depend on $d$, and thus all of Randal-Williams' examples hold verbatim in the $2d$-dimensional case whenever $d \geq 1$ is odd. 
\end{proof}

\appendix
\secone{MMM classes related to low Pontryagin classes}\label{sec:low-pontryagins}
In this appendix, we discuss of the images of the maps $\taut_d$, $\taut_d'$, and $\taut_{\delta,d}$ defined in Section~\ref{sec:intro-fixed-disks}. We prove that the image of $\taut_{\delta,d}$ is finitely generated. From now on, we omit the subscript $d$ from the notation.

\begin{proposition}\label{pro:pullback-kappa-fixed-disk}
    The maps $\taut$, $\taut'$, and $f^*$ pictured in diagram~\eqref{eq:taut-taut-prime-diagram} are related as follows:
    \begin{enumerate}
                 \item There are classes $q_1 , \ldots, q_{\ceil{\frac{d+1}{4}}-1} \in \image(\taut) \subset H^*(\BDiff M_g; \QQ)$ that generate $\image(\taut)$ as an $\image(\taut')$-module. 
        \item For all $i$, $f^*(q_i) = 0 \in H^*\left(\BDiff(M_g, D^{2d}); \QQ\right)$. 
    \end{enumerate}
            \end{proposition}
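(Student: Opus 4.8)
The plan is to deduce both parts from a single structural observation: in the stated range the vertical Pontryagin class $\gamma(p_i)$ is pulled back from the base. I work with the universal bundle $\pi\colon E\to B=\BDiff M_g$ and let $\gamma\colon H^*(\BSO{2d})\to H^*(E)$ classify $T_\pi E$. Set $s=\ceil{\frac{d+1}{4}}$, so that for $1\le i\le s-1$ one has $4i<d$. Since $M_g$ is $(d-1)$-connected, $\tH^q(M_g)=0$ for $0<q<d$, so in the Serre spectral sequence the only nonzero associated-graded piece of $H^{4i}(E)$ is $E_\infty^{4i,0}$, and (differentials going down-and-right) no differential can enter it; hence $\pi^*\colon H^{4i}(B)\to H^{4i}(E)$ is an isomorphism. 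I would define $\bar p_i\in H^{4i}(B)$ by $\pi^*(\bar p_i)=\gamma(p_i)$. The push--pull formula (Proposition~\ref{pro:properties-pushforward}) together with Fact~\ref{fact:pushforward-euler} then gives
$$\gkappa{e p_i}=\pi_!\big(\gamma(e)\cup\pi^*(\bar p_i)\big)=\pi_!(\gamma(e))\cup\bar p_i=\chi\,\bar p_i .$$
As $\chi=2-2g\neq0$ (here $g\neq 1$), I take $q_i:=\gkappa{e p_i}\in\image(\taut)$, so that $\bar p_i=q_i/\chi$.

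For part~(1): given a monomial $p\in\tbasisl$, I factor it as $p=\big(\prod_{i<s}p_i^{a_i}\big)\cdot h$ where $h$ is a monomial in $e,p_s,\dots,p_{d-1}$. Since $\gamma\big(\prod_i p_i^{a_i}\big)=\pi^*\big(\prod_i\bar p_i^{a_i}\big)$, push--pull yields
$$\gkappa{p}=\Big(\prod_{i<s}\bar p_i^{a_i}\Big)\,\gkappa{h},$$
and $\gkappa{h}$ is zero, a scalar, or a generator of $\image(\taut')$ according as $\deg h$ is $<$, $=$, or $>2d$. This exhibits $\image(\taut)$ as generated over $\image(\taut')$ by $\bar p_1,\dots,\bar p_{s-1}$, equivalently by $q_1,\dots,q_{s-1}$, which is the content of part~(1).

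For part~(2): over $B'=\BDiff(M_g,D^{2d})$ the identical connectivity argument gives $\gamma(p_i)=\pi'^{*}(\bar p_i')$ with $\bar p_i'=f^*\bar p_i$, and hence $f^*q_i=\chi\,\bar p_i'$. The fixed disk supplies a section $\sigma\colon B'\to E'$ (its centre); because every diffeomorphism in $\Diff(M_g,D^{2d})$ fixes $D^{2d}$ pointwise, its derivative at the centre is the identity, so $\sigma^*T_{\pi'}E'$ is trivial and $\sigma^*\gamma(p_i)=p_i(\text{trivial bundle})=0$. Since $\pi'\circ\sigma=\mathrm{id}$, I obtain $\bar p_i'=\sigma^*\pi'^{*}(\bar p_i')=\sigma^*\gamma(p_i)=0$, whence $f^*q_i=\chi\cdot 0=0$.

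The cleanest non-trivial input is the pullback lemma (that $\pi^*$ is an isomorphism in degrees $<d$) together with its naturality under $f$; once these are in hand, part~(2) is essentially immediate from the triviality of $\sigma^*T_{\pi'}E'$. I expect the genuinely delicate point to lie in part~(1): the push--pull computation directly gives generation of $\image(\taut)$ as an $\image(\taut')$-\emph{algebra} by $q_1,\dots,q_{s-1}$, and upgrading this to the stated generation as a \emph{module} requires reducing the monomials $\prod_{i<s}\bar p_i^{a_i}$ to the $\image(\taut')$-span of $1,q_1,\dots,q_{s-1}$. Controlling these products — where I would feed the classes $q_i$, and the pulled-back products $\gamma(p_ip_j)=\pi^*(\bar p_i\bar p_j)$, into Theorem~\ref{thm:fund-relations} to force the higher monomials to decompose — is the step that will demand the most care.
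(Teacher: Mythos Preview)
Your argument is essentially the paper's own: define the $q_i$ by $\pi^*(q_i)=\gamma(p_i)$ using the connectivity of $M_g$, use push--pull to factor $\gkappa{m}=\gkappa{n}\cdot\prod q_i$ with $n\in\tbasiss$, and for part~(2) pull back along the section coming from the fixed disk to see that $q_i$ is a characteristic class of a trivial bundle. Your explicit identification $q_i=\gkappa{ep_i}$ (up to the unit $\chi$) is a nice touch that actually verifies $q_i\in\image(\taut)$, something the paper asserts but does not spell out.

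Your final paragraph, however, is chasing a phantom. The paper's own proof establishes exactly what yours does---generation of $\image(\taut)$ as an $\image(\taut')$-\emph{algebra} by the $q_i$---and then simply calls this generation ``as a module.'' No reduction of the monomials $\prod\bar p_i^{a_i}$ to the $\image(\taut')$-span of $1,q_1,\dots,q_{s-1}$ is attempted or needed: the only downstream use (the corollary that $\image(f^*\circ\taut)=\image(\taut_\delta)$, hence Theorem~\ref{thm:finite-generation-bdry}) requires only that every $\gkappa{m}$ is a polynomial in the $q_i$ with coefficients in $\image(\taut')$, which you have. So drop the proposed appeal to Theorem~\ref{thm:fund-relations}; it would not yield literal module generation anyway, and nothing in the sequel depends on it.
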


\begin{proof}
Let $\pi\co U \to \BDiff M_g$ be the universal bundle and $p_i \in H^*(U; \QQ)$ be the Pontryagin classes of the vertical tangent bundle. Since $M_g$ is $(d-1)$-connected, the map $\pi^*\co H^*\left( \BDiff M_g; \QQ \right) \to H^*(U; \QQ)$ is an isomorphism in degrees $* < d$ (this can be seen e.g. using the Serre spectral sequence). It follows that there are classes $q_i \in H^*(\BDiff M_g; \QQ)$ such that $p_i = \pi^*(q_i)$ for all $i < \ceil{\frac{d+1}{4}}$. 

Now, let $m \in \tbasisl$. If $\deg m \leq 2d$, $\gkappa{m} = 0$ or $\gkappa{m} \in \QQ$, so $\gkappa{m} \in \image{\taut'} \subset \image{\taut}$. If $\deg m > 2d$, then $m$ can be decomposed as a product of some $n \in \tbasiss$ and some Pontryagin classes $p_i$ with $i < \ceil{\frac{d+1}{4}}$. Since the pushforward is a map of $H^*(\BDiff M_g; \QQ)$-modules, $\gkappa{m} = \pi_!(n \cdot \prod \pi^*(q_i)) = \gkappa{n} \cdot \prod q_i$ for some $i$'s. In other words, the $q_i$'s generate $\image(\taut)$ as an $\image(\taut')$-module, as desired.

Let us now prove that that $f^*(q_i)=0$ for all $i$'s. It is sufficient to consider the universal bundle with a fixed disk and prove that the corresponding universal classes $q_i \in H^*\left(\BDiff(M_g, D^{2d}); \QQ\right)$ are zero. We can fix a basepoint $b \in D^{2d} \subset M_g^{2d}$ that determines a section of the universal bundle (which we denote $U_\delta$). The following diagram describes  the corresponding map on cohomology.
\[\xymatrix{
U_\delta = \EDiff(M_g, D^{2d}) \times_{\Diff(M_g, D^{2d})} M_g \ar@/^/[d]^{\pi} 
& H^*(U_\delta; \QQ) 
\ar@/_/[d]_{s^*}
\\
\BDiff(M_g, D^{2d}) \ar@/^/[u]^{s} 
& H^*\left(\BDiff(M_g,D^{2d}); \QQ\right) \ar@/_/[u]_{\pi^*}
}\]

As $s$ is a section we must have $s^*(p_i) = s^*(\pi^*(q_i)) = q_i$ as long as $i < \ceil{\frac{d+1}{4}}$. So, $q_i = s^*(p_i)$ is a characteristic class of the bundle $s^*\left(T_{\pi} U_\delta\right) $ over $\BDiff(M_g, D^{2d})$. Since a neighborhood of the point $b$ is fixed by the action of $\Diff(M_g, D^{2d})$, this bundle is trivial, and so $q_i$ must be zero. 
\end{proof}

\begin{observation}\label{obs:low-pontr-relation}
    For $d>3$, in the notation of the proof above, $p_1 = \pi^*(q_1) \in H^*(U)$. Therefore, for all $g$,
    \[\chi \gkappa{e^2p_1} = \chi \pi_!\left( e^2 \cdot \pi^*(q_1) \right) = \pi_!(e) \cdot q_1 \cdot \pi_!(e^2) =\gkappa{ep_1}\gkappa{e^2}   \in H^*(\BDiff M_g; \QQ).\]
So, the map $\taut$ has non-trivial relations in its kernel that do not depend on $g$. This cannot happen in $\ker \taut_\delta$ or $\ker \taut'$ by Fact~\ref{fact:stability}.
\end{observation}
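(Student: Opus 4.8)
The plan is to obtain the displayed relation formally from a single geometric fact---that the degree-four Pontryagin class of the vertical tangent bundle is pulled back from the base---and then to read off the structural consequence. Write $\pi: U \to \BDiff M_g$ for the universal bundle and $p_1 \in H^4(U)$ for the first Pontryagin class of the vertical tangent bundle. First I would invoke Proposition~\ref{pro:pullback-kappa-fixed-disk}(1): because $M_g$ is $(d-1)$-connected, $\pi^*$ is an isomorphism in degrees $* < d$, so the degree-four class $p_1$ lies in the image of $\pi^*$ exactly when $4 < d$, which for odd $d$ is the condition $d > 3$ (equivalently $1 < \ceil{\frac{d+1}{4}}$). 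This furnishes a class $q_1 \in H^4(\BDiff M_g)$ with $p_1 = \pi^*(q_1)$, and this is the only step where connectivity of the fiber is used.

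Next I would run two pushforward computations, using the push-pull formula of Proposition~\ref{pro:properties-pushforward}(1) together with Fact~\ref{fact:pushforward-euler}, which gives $\pi_!(e) = \chi = 2 - 2g$. By Definition~\ref{def:kappa-classes} and $p_1 = \pi^*(q_1)$, we have $\gkappa{e^2 p_1} = \pi_!(e^2 \cup \pi^*(q_1)) = \pi_!(e^2)\, q_1 = \gkappa{e^2}\, q_1$, while $\gkappa{ep_1} = \pi_!(e \cup \pi^*(q_1)) = \pi_!(e)\, q_1 = \chi\, q_1$. Multiplying the first identity by $\chi$ and substituting the second gives $\chi\, \gkappa{e^2 p_1} = \chi\, q_1\, \gkappa{e^2} = \gkappa{ep_1}\, \gkappa{e^2}$, which is the asserted equation.

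For the structural conclusion I would note that $\chi\, \gkappa{e^2 p_1} - \gkappa{ep_1}\, \gkappa{e^2}$ is a nonzero element of $\QQ[\gkappa{p} \mid p \in \tbasisl]$---its two monomials $\gkappa{e^2 p_1}$ and $\gkappa{ep_1}\gkappa{e^2}$ are distinct and cannot cancel, whatever the value of $\chi$---lying in $\ker \taut_d$ for every $g$. Since $\deg \gkappa{e^2 p_1} = (4d+4) - 2d = 2d+4$ is independent of $g$, this is a relation in a fixed cohomological degree present in every genus. To see that $\taut_{\delta,d}$ admits no such genus-independent relation, I would fix the degree $2d+4$ and apply Theorem~\ref{thm:soren-oscar-high-dim}: it makes $\taut_{\delta,d}$ injective in degrees $* \leq (g-4)/2$, so once $g \geq 4d+12$ the kernel of $\taut_{\delta,d}$ vanishes in degree $2d+4$; hence no relation confined to a bounded degree can survive for all large $g$.

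The one genuinely non-formal ingredient is the descent $p_1 = \pi^*(q_1)$, and this is already isolated in Proposition~\ref{pro:pullback-kappa-fixed-disk}; everything afterward is a two-line manipulation with the push-pull formula and the Euler-characteristic computation $\pi_!(e) = \chi$. The main points requiring care are purely bookkeeping: pinning the hypothesis to $d > 3$ (so that $p_1$, of degree $4$, sits strictly below the stable range $* < d$), and interpreting ``independent of $g$'' as ``occurring in a fixed cohomological degree'', since it is precisely this bounded-degree feature that contradicts the linearly growing injectivity range $* \leq (g-4)/2$ of Theorem~\ref{thm:soren-oscar-high-dim}.
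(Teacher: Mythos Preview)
Your proposal is correct and follows essentially the same argument as the paper: the observation itself already displays the chain of equalities, which amounts to applying the push--pull formula twice together with $\pi_!(e)=\chi$, exactly as you do. Your added explanations---why $d>3$ forces $p_1$ into the range where $\pi^*$ is an isomorphism, why the resulting polynomial is nonzero in $\QQ[\gkappa{p}\mid p\in\tbasisl]$, and why a bounded-degree relation is incompatible with the injectivity range of Theorem~\ref{thm:soren-oscar-high-dim}---simply make explicit what the paper leaves implicit.
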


Proposition~\ref{pro:pullback-kappa-fixed-disk} implies the following.
\begin{corollary}
    If $\gkappa{m} \in \image(\taut) - \image(\taut')$, then $f^*(\gkappa{m}) = 0$. So, $\image(f^* \circ \taut) = \image(\taut_\delta)$.
\end{corollary}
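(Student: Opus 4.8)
The plan is to read off both assertions directly from the two parts of Proposition~\ref{pro:pullback-kappa-fixed-disk}, using only that diagram~\eqref{eq:taut-taut-prime-diagram} commutes and that $f^*$ is a ring homomorphism. Commutativity gives $\taut' = \taut \circ i$ and hence $\taut_{\delta} = f^* \circ \taut'$, so the target of the second assertion, $\image(\taut_{\delta})$, is literally $f^*\bigl(\image(\taut')\bigr)$. Everything then reduces to computing $f^*$ on $\image(\taut)$, and part~(2) of the proposition ($f^*(q_i)=0$) is exactly what makes this computation collapse.

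For the first assertion, suppose $\gkappa{m}$ is the kappa class of a monomial $m \in \tbasisl$ lying in $\image(\taut) - \image(\taut')$. I would first note that the hypothesis forces $m \notin \tbasiss$ with $\deg m > 2d$ and involving a low Pontryagin class: indeed, if $m \in \tbasiss$ then $\gkappa{m} \in \image(\taut')$ by definition, and if $\deg m \leq 2d$ then $\gkappa{m}$ has cohomological degree $\deg m - 2d \leq 0$, so it is either $0$ or a constant, and constants lie in $\image(\taut')$ since $\taut'$ is a unital $\QQ$-algebra map. Thus $m$ must involve at least one $p_i$ with $i < \ceil{\frac{d+1}{4}}$. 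This is precisely the situation analysed inside the proof of Proposition~\ref{pro:pullback-kappa-fixed-disk}, where the push--pull formula yields $\gkappa{m} = \gkappa{n} \cdot \prod_i q_i$ with $n \in \tbasiss$ and a nonempty product of the $q_i$. Applying the ring map $f^*$ together with part~(2) then gives $f^*(\gkappa{m}) = f^*(\gkappa{n}) \cdot \prod_i f^*(q_i) = 0$.

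The second assertion is the same mechanism applied to the entire ring. By part~(1), every element of $\image(\taut)$ is an $\image(\taut')$-linear combination of \emph{monomials} in $q_1, \ldots, q_{\ceil{\frac{d+1}{4}}-1}$. Since $f^*$ is multiplicative, $f^*\bigl(\image(\taut)\bigr)$ is spanned over $f^*\bigl(\image(\taut')\bigr)$ by the images of these monomials; every monomial of positive length is annihilated by part~(2), leaving only the empty monomial $1$. Hence $f^*\bigl(\image(\taut)\bigr) = f^*\bigl(\image(\taut')\bigr)$, and the identification $\taut_{\delta} = f^* \circ \taut'$ rewrites the right-hand side as $\image(\taut_{\delta})$, giving $\image(f^* \circ \taut) = \image(\taut_{\delta})$.

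I do not expect a genuine obstacle—this really is an immediate corollary—but the step needing the most care is the module bookkeeping in the last paragraph: one must remember that ``generated as an $\image(\taut')$-module by the $q_i$'' means generated by all \emph{products} of the $q_i$, and it is precisely the multiplicativity of $f^*$ that guarantees each such product of length $\geq 1$ vanishes, rather than only the single generators $q_i$.
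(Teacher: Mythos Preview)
Your proposal is correct and matches the paper's intended argument: the paper gives no explicit proof, simply stating that the corollary ``immediately implies'' from Proposition~\ref{pro:pullback-kappa-fixed-disk}, and your write-up is exactly the natural unpacking of that implication. Your final caveat is well taken---the proposition as stated says ``generate as an $\image(\taut')$-\emph{module},'' whereas its proof really shows the $q_i$ generate $\image(\taut)$ as an $\image(\taut')$-\emph{algebra} (equivalently, the monomials in the $q_i$ generate as a module); you correctly identified and handled this.
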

 \begin{theorem}\label{thm:finite-generation-bdry}
    The image of $\taut_{\delta, d}$ is a finitely-generated as a $\QQ$-algebra when $d$ is odd and $g > 1$.
\end{theorem}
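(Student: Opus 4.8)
The plan is to deduce the statement directly from the finite generation of $\image(\taut_d)$ proved in Theorem~\ref{thm:finite-generation}, combined with the Corollary just established, using only the elementary fact that the image of a finitely generated $\QQ$-algebra under a $\QQ$-algebra homomorphism is again finitely generated.

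First I would pin down the precise relationship between the images. Commutativity of diagram~\eqref{eq:taut-taut-prime-diagram} expresses $\taut_{\delta,d}$ as the composite $f^* \circ \taut'_d$, so that $\image(\taut_{\delta,d}) = \image(f^* \circ \taut'_d)$. The Corollary preceding the statement identifies this with $\image(f^* \circ \taut_d) = f^*\bigl(\image(\taut_d)\bigr)$. The point is that the extra generators $\gkappa{m}$ with $m \in \tbasisl \setminus \tbasiss$ contribute nothing new after applying $f^*$: by Proposition~\ref{pro:pullback-kappa-fixed-disk}(1) each such class is an $\image(\taut'_d)$-multiple of a product of the classes $q_i$, and by Proposition~\ref{pro:pullback-kappa-fixed-disk}(2) we have $f^*(q_i) = 0$, so every such product dies under $f^*$.

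The decisive step is then purely algebraic. Since $f^*$ is a homomorphism of $\QQ$-algebras, it carries any finite $\QQ$-algebra generating set of $\image(\taut_d)$ to a finite generating set of $f^*\bigl(\image(\taut_d)\bigr)$. By Theorem~\ref{thm:finite-generation}, $\image(\taut_d)$ is finitely generated as a $\QQ$-algebra precisely when $d$ is odd and $g > 1$; hence $\image(\taut_{\delta,d}) = f^*\bigl(\image(\taut_d)\bigr)$ inherits finite generation under exactly the same hypotheses.

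I do not anticipate a genuine obstacle once Theorem~\ref{thm:finite-generation} and the Corollary are in hand, as the argument is formal. The only point demanding care is confirming that the Corollary supplies an \emph{equality} of images rather than merely the inclusion $\image(\taut_{\delta,d}) \subseteq f^*\bigl(\image(\taut_d)\bigr)$; this is exactly what killing the module generators $q_i$ guarantees, so no further work beyond citing Proposition~\ref{pro:pullback-kappa-fixed-disk} is required.
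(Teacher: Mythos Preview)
Your proposal is correct and matches the paper's own proof essentially verbatim: the paper simply cites the preceding Corollary to identify $\image(\taut_{\delta,d})$ as a quotient (equivalently, the $f^*$-image) of $\image(\taut_d)$, and then invokes Theorem~\ref{thm:finite-generation}. Your additional unpacking of why the Corollary holds via Proposition~\ref{pro:pullback-kappa-fixed-disk} is accurate but more detailed than the paper bothers to be.
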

\begin{proof}
    By the above Corollary, the image of the map $\taut_\delta$ is a quotient of the image of the map $\taut$, which is finitely generated by Theorem~\ref{thm:finite-generation}.
\end{proof}

\begin{remark}
    If we require that all the Pontryagin classes $p_i$ mentioned in Section~\ref{sec:using-big-theorem} satisfy $i \geq \ceil{\frac{d+1}{4}}$, all of the arguments in that section will apply to the map $\taut'\co \QQ[ \gkappa{p} \mid p \in \tbasiss] \to H^*(\BDiff(M_g); \QQ)$ without any further modification. This way, one can prove that the image of the map $\taut'$ is also finitely generated. That gives another proof that the image of $\taut_{\delta}$ is finitely generated.
\end{remark}

\secone{The Pontryagin-Thom pushforward}\label{sec:pushforward-madness}

While the definition of the pushforward map used throughout this paper applies to all oriented Serre fibrations, in the case of manifold bundles ($M$ is a smooth closed oriented manifold and $\pi\co E \to B$ is a bundle with structure group $\Diff M$), there is another commonly used definition of the pushforward map $\pi_{!PT}\co H^{*+m}(E; \ZZ) \to H^*(B; \ZZ)$ that uses the Pontryagin-Thom construction, see~\cite{boardman69} or~\cite[\S 4]{becker-gottlieb75}.  This \defterm{Pontryagin-Thom pushforward} has the advantage of being defined even for generalized cohomology theories if the bundle has an appropriate orientation. It is also necessary for constructing the kappa classes as pullbacks of natural classes in the cohomology of the infinite-loop space $\Omega^\infty MTSO(2d)$ in the manner of~\cite{madsetillm-stable}.  While we do not use that construction explicitly, it is needed in the proof of Fact~\ref{fact:stability}.

It is conceivable that the notion of kappa classes depends on which definition of the pushforwards one uses. We do not know whether $\pi_!$ and $\pi_{!PT}$ coincide for integral cohomology when $B = \BDiff M$. However, the following fact applies in most relevant cases. It is accepted in the literature, but we provide a proof for completeness.

\begin{proposition}
If $E \to B$ is a manifold bundle with structure group $\Diff M$ and $B$ is a CW complex of finite type, the pushforwards $\pi_{!PT}$ and $\pi_!$ coincide. 

In rational cohomology, $\pi_{!PT}$ and $\pi_!$ coincide for any CW complex $B$.
\end{proposition}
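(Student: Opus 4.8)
The plan is to use that both $\pi_!$ and $\pi_{!PT}$ are natural transformations satisfying the projection formula, to verify agreement over a point, and then to propagate this over a CW base by a skeletal Mayer--Vietoris induction. First I would record the two structural properties shared by the maps: naturality with respect to pullback squares as in~\eqref{eq:pullback-of-mfld-bundles}, and the push-pull formula $\pi_?(a \cup \pi^*(b)) = \pi_?(a) \cup b$. For $\pi_!$ these are exactly Proposition~\ref{pro:properties-pushforward}; for $\pi_{!PT}$ they follow from the naturality of the fiberwise Pontryagin--Thom collapse map and of the Thom isomorphism, for which I would cite~\cite{becker-gottlieb75}.

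Next I would settle the base case $B = \{\ast\}$, where $E = M$ and $\pi\colon M \to \ast$. Here both maps are the single homomorphism $H^{2d}(M) \to H^0(\ast) = \ZZ$ carrying the orientation class to $1$: for $\pi_!$ this is immediate from Definition~\ref{def:ss-pushforward}, since for $B$ a point the filtration is trivial and the map is just the orientation isomorphism on $E_2^{0,2d} = H^{2d}(M)$; for $\pi_{!PT}$ it is the classical fact that the Pontryagin--Thom collapse realizes evaluation against the fundamental class. With the base case in hand, I would run an induction on a CW structure of $B$. Over a single cell the bundle is trivial, $M \times D^k \to D^k$, so by Künneth every class is a sum of classes of the form $a \times 1$ and both pushforwards equal integration over $M$ tensored with the identity; the point computation applies verbatim. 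To glue, I would use that both maps commute with restriction to $E|_A$ over a subcomplex $A \subset B$ (naturality), hence with the Mayer--Vietoris connecting homomorphisms, and conclude agreement over the union from agreement over $A$ and over the attached cells by a five-lemma diagram chase.

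To reach the stated generality I would invoke the elementary fact that $H^n(B) \cong H^n\!\left(B^{(n+1)}\right)$, so that in each fixed cohomological degree one may replace $B$ by a skeleton. When $B$ is of finite type every such skeleton is a finite complex, and the inductive step above then yields the integral statement directly. The rational statement for arbitrary $B$ follows because the only role of finiteness is to control the $\lim^1$-term that appears when the Pontryagin--Thom pushforward is assembled from finite subcomplexes, equivalently from finite-codimension fiberwise embeddings into $B \times \RR^N$ with $N \to \infty$; after tensoring with $\QQ$ the relevant inverse system of finite-dimensional $\QQ$-vector spaces is automatically Mittag--Leffler, so this $\lim^1$ obstruction vanishes and the two maps agree.

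The main obstacle I anticipate is exactly this last comparison: checking that the Pontryagin--Thom construction is compatible with the Mayer--Vietoris sequences and with the skeletal stabilization, and isolating precisely why finiteness of type (integrally) or rationalization (in general) kills the $\lim^1$ term coming from the stabilizing family of embeddings. The formal naturality and the point-level computation are routine; the genuine care lies in the bookkeeping of the Thom-space construction under gluing and limits.
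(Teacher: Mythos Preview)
Your strategy differs from the paper's. The paper also reduces first to finite subcomplexes via $H^*(B) \cong \varprojlim_{B'\subset B \text{ finite}} H^*(B')$ (valid integrally when $B$ has finite type and rationally in general), but then, rather than inducting over cells, it invokes Lemma~\ref{lem:CW-complex-retract-of-manifold}: every finite CW complex is a retract of a smooth closed oriented manifold. Since a retract induces a split injection on cohomology and both pushforwards are natural, it suffices to treat the case where $B$ is itself a closed oriented manifold; there both pushforwards are identified with the Poincar\'e-duality transfer by Boardman~\cite{boardman69}. No gluing argument is needed.

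Your Mayer--Vietoris induction has a real gap. First, the word ``hence'' in ``commute with restriction, hence with the Mayer--Vietoris connecting homomorphisms'' is unjustified: compatibility with the connecting map is strictly more structure than naturality under open pullbacks, and it must be checked for the Serre-spectral-sequence pushforward $\pi_!$ just as much as for $\pi_{!PT}$. Second, and more seriously, the five lemma is the wrong tool. It certifies that a single map between two long exact sequences is an isomorphism; it does \emph{not} show that two chain maps between the same long exact sequences coincide. Concretely, even if the difference $D=\pi_!-\pi_{!PT}$ were a self-map of the Mayer--Vietoris ladder vanishing on four of the five columns, you could not conclude it vanishes on the fifth: already for $0\to\ZZ\to\ZZ^2\to\ZZ\to 0$ the endomorphism $(m,n)\mapsto(n,0)$ of the middle term extends by zero to a nonzero chain map. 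The honest salvage is to promote both $\pi_!$ and $\pi_{!PT}$ to natural transformations of \emph{relative} cohomology on CW pairs (commuting with the boundary of the long exact sequence of a pair) and then invoke the standard uniqueness statement that such a transformation is determined by its value on a point. That upgrade is where the real work lies, and it is exactly what the paper's retract-to-a-manifold trick is designed to avoid.
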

\begin{proof}
One can check that the Pontryagin-Thom construction commutes with bundle pullbacks in an appropriate way so that $\pi_{!PT}$ satisfies the naturality property (2) from Proposition~\ref{pro:properties-pushforward}. If we either work in rational cohomology or assume that $B$ is a CW complex of finite type, we have (see e.g.~\cite[\S 3.F]{hatcher-at} for an overview)
\[H^*(B) = \lim_{\substack{\longleftarrow \\ B' \subset B \\ \text{ finite subcomplex}}} H^*(B').\]
So, we can assume without loss of generality that $B$ is a finite CW complex. Finally, we use the Lemma~\ref{lem:CW-complex-retract-of-manifold} below to reduce the case of a finite CW complex to the case of $B$ a closed oriented manifold. 

    In the case when $B$ is a closed oriented manifold, the fact that $\pi_{!PT}$ and $\pi_!$ coincide is proven in~\cite{boardman69}. Briefly, Boardman proves a multiplicativity property for the \emph{cap} product, similar to property (1) from Proposition~\ref{pro:properties-pushforward}, for both $\pi_!$ and $\pi_{!PT}$. He then deduces that both pushforwards must coincide with the pushforward determined by Poincar\'e duality. 
\end{proof}

\begin{lemma}\label{lem:CW-complex-retract-of-manifold}
Any finite CW complex $B$ is a retract of a smooth oriented closed manifold $D$. In particular, there is a map $f\co D \to B$ such that $f^*\co H^*(B; \ZZ) \to H^*(D; \ZZ)$ is injective.
\end{lemma}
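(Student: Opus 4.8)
The plan is to realize $B$, up to homotopy, as a retract of a closed oriented smooth manifold obtained by doubling a regular neighborhood. First I would replace $B$ by a homotopy-equivalent finite simplicial complex $K$; such a $K$ exists because every finite CW complex is homotopy equivalent to a finite simplicial complex. Since cohomology is a homotopy invariant, it then suffices to produce a closed oriented smooth manifold $D$ together with a continuous map $f \colon D \to B$ that induces an injection on cohomology, which is exactly what the ``in particular'' clause (and the only thing used in the preceding proposition) requires.

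Next I would embed $K$ into some Euclidean space $\RR^N$ with $N$ large, and pass to a regular neighborhood $U$ of $K$. By the theory of regular neighborhoods, $U$ may be taken to be a compact smooth manifold with boundary that deformation retracts onto $K$; write $\rho \colon U \to K$ for the retraction. Because $U$ is a codimension-zero submanifold of $\RR^N$, it inherits an orientation and is in particular orientable.

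Then I would let $D = U \cup_{\partial U} U$ be the double of $U$ along its boundary, smoothed using a collar neighborhood of $\partial U$. This is a closed smooth manifold, and it is orientable since the double of an orientable manifold with boundary is orientable: orient one copy by $o$ and the other by $-o$, and these agree across $\partial U$. The fold map $\phi \colon D \to U$, which is the identity on each copy, is continuous and restricts to the identity on the first copy $U \cong U_1 \hookrightarrow D$; hence $U$, and therefore $K$, is a retract of $D$ via $r = \rho \circ \phi$, with section given by $K \hookrightarrow U_1 \hookrightarrow D$. Composing $r$ with a homotopy inverse $K \xrightarrow{\simeq} B$ yields the desired $f \colon D \to B$. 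In cohomology $f^*$ factors as an isomorphism followed by $r^*$, and $r^*$ is injective because $r$ admits a section, so $f^*$ is injective.

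The steps that require the most care are the existence of the smooth regular neighborhood $U$ deformation-retracting onto $K$ (this is where PL/smooth regular-neighborhood theory enters) and the smoothing of the doubled manifold along $\partial U$; the orientation of the double and the folding retraction are then routine. I would emphasize that only continuity of $f$ is needed for the cohomological conclusion, so no smoothness of the fold map $\phi$ at the fold locus is required.
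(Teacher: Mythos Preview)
Your proof is correct and follows essentially the same route as the paper: embed into Euclidean space, take a compact smooth neighborhood with boundary that deformation retracts onto the complex, double it, and use the fold map to retract back. The paper works directly with $B$ (appealing to the fact that a finite CW complex embedded in $\RR^N$ is a neighborhood retract, citing Hatcher's appendix), whereas you first pass to a homotopy-equivalent simplicial complex $K$ so that PL regular-neighborhood theory applies cleanly. One small consequence of your extra step: because you only have $K \simeq B$ rather than $K = B$, your argument yields $B$ as a \emph{homotopy} retract of $D$ rather than a strict retract, so the first sentence of the lemma is obtained only up to homotopy. You correctly observe that this suffices for the injectivity of $f^*$, which is all that the preceding proposition uses.
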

\begin{proof}\hspace{-0.7ex}\footnote{We thank Alexander Kupers for a key idea for this proof. This argument is also in~\cite{thom54}.}  It is possible to embed $B$ into a Euclidean space. A sufficiently small tubular neighborhood $T$ of such an embedding will be an oriented compact manifold \emph{with boundary} that deformation retracts onto $T$ (see e.g. appendix to~\cite{hatcher-at}). In particular, we have maps $B \stackrel{i}{\hookrightarrow} T \stackrel{f'}{\to} B$ such that the composition is the identity.

Let $D = T \sqcup_{\delta T} (-T)$ be the double of $T$. It is a closed oriented manifold. There is an obvious inclusion $T \hookrightarrow D$ and, crucially, the map $f'\co T \to B$ extends to a map $f\co D \to B$. So, we have our retraction
\[\xymatrix{
B \ar@{^(->}[r]^{i}  &T \ar@{^(->}[r] \ar@/_1.5em/[rr]^{f'} & D \ar[r]^{f} & B.
}\]
The composition is the identity since it coincides with $f' \circ i$.
\end{proof}

\mybibstyle
\bibliography{bib}
 
\end{document}